\newtheorem{df}{Definition}[section]
\newtheorem{pr}{Proposition}[section]
\newtheorem{cor}{Corollary}[section]
\newtheorem{lem}{Lemma}[section]
\newtheorem{thm}{Theorem}[section]
\newcommand{\lig}{\mathfrak{g}}
\newcommand{\Hi}{{\cal H}}
\def\ignore#1{}
\newcommand{\application}[5]{\begin{array}{l|rcl}
#1: & #2 & \longrightarrow & #3 \\
    & #4 & \longmapsto & #5 \end{array}}
\title{{\bf Meromorphic continuation approach to noncommutative geometry}}
\author{ Franck GAUTIER-BAUDHUIT\\
                              ~~\\
                              ~~\\
         {\sl Laboratoire de Mathématiques (UMR 6620), Université Blaise Pascal},\\
         {\sl 3 place Vasarély, CS 60026, 63178 Aubière Cedex, France}.\\
                              ~~\\
          \href{mailto:franck.gautier@math.univ-bpclermont.fr}{franck.gautier@math.univ-bpclermont.fr}}
\begin{document}

\maketitle

 \begin{abstract}
Following an idea of Nigel Higson, we develop a method for proving the existence of a meromorphic continuation for some spectral zeta functions. The  method is based on algebras of generalized differential operators.  The main theorem states, under some conditions, the existence of a meromorphic continuation, a localization of the poles in supports of arithmetic sequences and an upper bound of their order. We give an application in relation with a class of nilpotent Lie algebras.
 \end{abstract}

\tableofcontents

\section*{Introduction} Let ${\cal M}$ be a smooth, closed manifold and $\Delta$  a Laplace-type operator on ${\cal M}$. For every linear differential operator $X$ on ${\cal M}$ the zeta function $\zeta_{X,\Delta}:z \mapsto {\rm Trace}(X \Delta^z)$ is holomorphic in some left half-plane within $\mathbb{C}$ and admits a meromorphic continuation to all of $\mathbb{C}$. This result was first observed by Minakshisundaram and Pleijel \cite{zetaMP} in 1949,  and totally proved by Seeley \cite{Seeley} in 1967. In 2006, Nigel Higson suggested in \cite{Nhigson} a new proof of this meromorphic continuation. In the introduction N.Higson  wrote "it would be interesting to see whether or not the basic method can be adapted to other more complicated situations". This suggestion was the reason of the present article.\\

This article is built in two parts. In the first one,  we expose  a general method to study meromorphic continuation for some zeta functions. In the second, we apply this method to a particular family of nilpotent Lie algebras.\\

In  this paper $\Hi$ is  a separable complex Hilbert space,  $\Delta$  is a self-adjoint, positive and invertible linear operator on $\Hi$ with compact resolvent. We denote $\Hi^{\infty}=\bigcap_{n\in \mathbb{N}}{\rm Dom}(\Delta^n)$, it is a dense subspace of $\Hi$ (see \cite[page 3]{Otgon}).\\

In a first part we recall some well known results about Sobolev spaces associated to $\Delta$ and generalized pseudo-differential calculus. Most of the ideas are in the article \cite{Otgon} and book \cite{SimonReed}. Then we set in Paragraph \ref{algdif} a framework for the method, namely the algebras of generalized differential operators. An important tool is Taylor expansions of commutators like $[X,\Delta^z]$  (Lemma $\ref{lem:Taylor}$) in terms of holomorphic families (Definition $\ref{def:holofamily}$). These expansions were first established by A.Connes and H.Moscovici \cite[Appendix B]{Connes1}.  In his article \cite{Nhigson} N.Higson used this formula with some operators like\\

 $$\application {H_{a,b}}{{\cal D}}{{\cal D}}{W}{\displaystyle{\sum_{i=1}^n a_i [-Q_i,P_iW]+\sum_{i=1}^n b_i [P_i,Q_iW]},}$$

 where ${\cal D}$ was an algebra of generalized differential operators, $a_i,b_i$ some complex numbers and $P_i,Q_i$ some operators in  ${\cal D}$. In \cite{Nhigson} the operators $P_i$ and $Q_i$ are subject to the supplementary equality
\begin{equation}
\sum_i[P_i,Q_i]=\rho {\rm Id},
\end{equation}
where $\rho$ is the dimension of the manifold. A similar condition, namely
\begin{equation}
\sum_i(a_i+b_i)[P_i,Q_i]=\rho {\rm Id}
\end{equation}

for some complex number $\rho$, will be required in Section \ref{sect:deux}. To investigate some situations more "noncommutative" than operators on manifold, we need to compose such operators and then formulate some Taylor expansions related to these compositions, see Lemmas $\ref{lem:Taylor Hab X}$, $\ref{lem:Taylor HlambdaX}$ and $\ref{lem:Taylor Hzx}$.\\

  We assume that $\Delta$ is in some Schatten ideal, namely some powers of $\Delta$ are trace class operators. Then   for any integer number $k$,  the holomorphic families of type $k$ define holomorphic zeta functions on left half-plane ${\rm Re}(z)<k\alpha$ with a fixed $\alpha>0$. The  aim is to show that holomorphic families of type $k$ are somehow  eigenvectors modulo holomorphic families of type $k-1$ for operators made of commutators. To achieve this  we proceed in two times. In the first one, we establish a reduction at algebraic level, using the main new tool of this article called  reduction sequence $\eqref{suitereduc}$. In the second one, we extend this reduction to all holomorphic families in the important  Theorem $\ref{lem:Reductionmero}$. Theorem $\ref{thetheorem}$ shows the existence of meromorphic continuation for some zeta functions $\zeta_{X,\Delta}:z \mapsto {\rm Trace}(X \Delta^z)$, and in the same time  gives a localization of the poles and a majoration of their orders.\\

The method consists  in building a  reduction sequence $\eqref{suitereduc}$, which is fundamentally an algebraic problem. For this goal we must have a good knowledge of the algebra of generalized differential operators. For instance in the  manifold's  case we deal with Weyl algebras \cite{Dixmier1}.\\

As an application of the method exposed in the first part, we prove in the second part the meromorphic continuation for zeta functions associated with nilpotent Lie algebras. Let $G$ be a connected and simply connected nilpotent Lie group, let $\lig$ be its Lie algebra and ${\cal U}(\lig)$ its universal enveloping algebra.  In his article \cite[Theorem 7.1]{Kirillov} Kirillov proves that every unitary and irreducible representation $\pi$ of the group $G$ gives by derivation a surjective representation $\rho$ of the universal algebra ${\cal U}(\lig)$ on a Weyl algebra. Given  a basis $(X_1,\dots,X_n)$ of $\lig$, we shall define the Goodman-Laplacian \cite{Goodman} as $\Delta=1+\rho(-X_1^2-\dots-X_n^2)$. It allows us to make the Weyl algebra  ${\cal D}=\rho({\cal U}(\lig))$ an algebra of  generalized differential operators  for which $\rho(\Delta)$ is a generalized laplacian of order two, and to define the zeta functions $\zeta_{X,\Delta}:z \mapsto {\rm Trace}(X \Delta^z)$ where $X$ is in ${\cal D}$. The kernel  of $\rho$ is a nontrivial two-sided ideal of ${\cal U}(\lig)$.\\

Let $n\in \mathbb{N}^*$ and consider a $n$-uple $\alpha \in (\mathbb{N}^*)^n$. Denote by ${\cal I}$ a partition of $\llbracket 1,n \rrbracket$. We associate to $\alpha$ and ${\cal I}$ a nilpotent Lie algebra  $\lig_{\alpha,{\cal I}}$ in Paragraph \ref{liealpha}. The center of ${\cal U}(\lig_{\alpha,{\cal I}})$ is a one dimensional vector space generated by the element $Y^{(0,\dots,0)}$. We consider the representation $\rho$ of ${\cal U}(\lig_{\alpha,{\cal I}})$ associated to the linear form $f=(Y^{(0,\dots,0)})^*$ by the orbit method of Kirillov \cite{Kirillov}. In this context $\rho(\Delta)$ is a Schrödinger operator.
Giving an operator in ${\cal D}$, we have to find a preimage with minimal degree in ${\cal U}(\lig)$. For this we use the theory of Gröbner bases for ideals, but no knowledge outside Proposition $\ref{pr:OI}$ is necessary in this paper. The generators of the ideal ${\rm Ker}(\rho)$ exhibited by C. Godfrey \cite{Godfrey} are recalled in Proposition $\ref{lemgenIf}$, and after some computations we obtain a Gröbner basis of ${\rm Ker}(\rho)$  in Corollary $\ref{corgenIf}$. The Gr\"obner basis provides one particular preimage (under $\rho$) of any element of $\mathcal D$ with minimal degree, and gives useful information on the dominant term of this preimage. This information is essential for proving some reduction results in the algebra ${\cal U}(\lig_{\alpha,{\cal I}})$ (see Lemma $\ref{redOif}$ and Proposition $\ref{Ld}$), and for constructing a reduction sequence $(T_s)_{s\in \mathbb{N}}$ $\eqref{TSLIE}$. Finally we prove the meromorphic continuation of the zeta functions $\zeta_{X,\Delta}$ in the context of the nilpotent Lie algebras $\mathfrak g_{\alpha,\mathcal I}$ (Theorem $\ref{th:merolig}$) as an application of Theorem \ref{thetheorem}.

\bigskip

\noindent {\bf Acknowledgements}: I thank Jean-Marie Lescure and Dominique Manchon for their valuable help.

\section{Meromorphic continuation via reduction sequences}
  In this section $\Hi$ is  a separable complex Hilbert space,  $\Delta$  is a self adjoint, positive and invertible linear operator on $\Hi$ with compact resolvent.  The spectrum of $\Delta$  is a  nondecreasing sequence $0<\lambda_0 \le \lambda_1\le \lambda_2 \le \dots$ of real numbers which diverges to $+\infty$. We denote by  $\Gamma$  a downwards pointing vertical line in $\mathbb{C}$ separating 0 from the spectrum of $\Delta$.

\subsection{An algebra of operators}

$\label{Sobolev}$

\begin{df} Let   ${\cal A}$ be an associative algebra. Let $I=\mathbb{N},\mathbb{Z}$ or $\mathbb{R}$. A family $({\cal A}^i)_{i\in I}$  composed of subsets of ${\cal A}$ is an increasing filtration of ${\cal A}$ if the following conditions are satisfied:
\begin{enumerate}[nolistsep]
\item For all  $i,j \in I$: $i\le j \Rightarrow {\cal A}^i \subset {\cal A}^j$.
\item  $\displaystyle{{\cal A}=\cup_{i\in I} {\cal A}^i}$.
\item For all $i,j \in I$: ${\cal A}^i{\cal A}^j \subset {\cal A}^{i+j}$.
\end{enumerate}
An element $a\in {\cal A}$ has order  less than or equal to $i$ if $a \in {\cal A}^i$.
\end{df}
We shall write "Let $a\in {\cal A}^{i_a}$" instead of "Let $a\in {\cal A}$ with order  less than or equal to $i_a$".
\ignore{Now let ${\cal A}$ be an associative algebra with an increasing filtration $({\cal A}^i)_{i\in I}$. Let ${\cal B}$ a subalgebra of ${\cal A}$. The family $({\cal B} \cap{\cal A}_i)_{i\in I}$ is an increasing filtration of ${\cal B}$ called induced filtration on ${\cal B}$ by $({\cal A}^i)_{i\in I}$. The subalgebra ${\cal B}$ is named filtered subalgebra of ${\cal A}$ if exist an increasing filtration  $({\cal B}^i)_{i\in I}$ of $B$ s.t for any $i\in I$, we have ${\cal B}^i \subset {\cal A}^i$.}

\begin{df} Denote by $\Hi^{\infty}$ the space of vectors common to the domains of all powers of $\Delta$:
\begin{equation*} \displaystyle{\Hi^{\infty}=\bigcap_{n\in \mathbb{N}}{\rm Dom}(\Delta^n) \subseteq  \Hi },\end{equation*}
where ${\rm Dom}(\Delta^n)$ is the domain of $\Delta^n$.
\end{df}

 The space $\mathcal H_\infty$ is dense in $\Hi$ (see \cite[page 3]{Otgon}).  We denote by ${\rm End}(\Hi^{\infty})$  the algebra of linear endomorphisms of $\Hi^{\infty}$. Let $T$ be an unbounded operator with $\Hi^{\infty}\subseteq {\rm Dom}(T)$, if $\Hi^{\infty}$ is stable for $T$ then $T_{|\Hi^{\infty}} \in {\rm End}(\Hi^{\infty})$, we shall still denote by $T$ instead of $T_{|\Hi^{\infty}}$ this endomorphism.

\begin{df} $\label{algdif}$
 Let $\Hi$ be  a separable complex Hilbert space  and $\Delta$   a self adjoint, positive and invertible linear operator on $\Hi$ with compact resolvent. Let ${\cal D}$  be a subalgebra of  ${\rm End}(\Hi^{\infty})$ and $r \in \mathbb{N}^*$. We shall call $\left ({\cal D},\Delta,r\right) $ an {\sl algebra of generalized differential operators} associated with the  generalized Laplace operator $\Delta$ of order  $r$, if the following conditions are satisfied:
\begin{enumerate}[nolistsep]
\item The algebra ${\cal D}$ is filtered with an increasing filtration  $({\cal D}^q)_{q \in \mathbb{N}}$.
\item The unit of ${\cal D}$ (if it  exists) has an order equal to 0.
\item For every  $X\in {\cal D}^{q_X}$,  $[\Delta,X]\in  {\cal D}^{q_X+r-1}$.
\item $\label {ellip}$ For all  $X\in {\cal D}^{q_X}$, there exists $\varepsilon>0 $ s.t:
 \begin{equation*} \forall v \in \Hi^{\infty},~~\|\Delta^{\frac{q_X}{r}}v \|+\|v\|\ge \varepsilon \|Xv\|~~~\text{~(Generalized G{\aa}rding inequality).}\end{equation*}
\end{enumerate}
\end{df}

\begin{df} $\label{Sobolev}$ Let $r\in \mathbb{N}^*$. For every $s\in \mathbb{R}$,  $\Hi^s$ is the completion in  $\Hi$ of ${\rm Dom}(\Delta^{\frac{s}{r}})$ for the norm $\|.\|_s$ associated with the scalar product:
\begin{equation*} \forall u,v \in {\rm Dom}(\Delta^{\frac{s}{r}}),~~(u|v)_s=(\Delta^{\frac{s}{r}}u,\Delta^{\frac{s}{r}}v)_{ \Hi}.\end{equation*}
\end{df}

As a consequence of the above definition, $\Delta^{\frac{m}{r}}: \Hi^s\rightarrow \Hi^{s-m}$ is a unitary operator for any real numbers  $s$ and $m$.

\begin{lem} $\label{lem:Hst}$ For every $s \ge t$, the embedding $\Hi^s \subset \Hi^t$ is continuous. Moreover, for every $s\ge 0$, ${\rm Dom}(\Delta^{\frac{s}{r}})$ is a complete space and therefore $\Hi^s={\rm Dom}(\Delta^{\frac{s}{r}})$.
\end{lem}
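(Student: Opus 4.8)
The plan is to reduce everything to the spectral theorem for $\Delta$; none of the assertions uses the compact resolvent, only that $\Delta$ is self-adjoint, positive and invertible, so that ${\rm spec}(\Delta)\subseteq[\lambda_0,+\infty)$ with $\lambda_0=\min{\rm spec}(\Delta)>0$. First I would record three elementary functional-calculus inequalities, all coming from $\lambda^{2b}\le\lambda_0^{\,2(b-a)}\lambda^{2a}$ (valid for $\lambda\ge\lambda_0$ and $b\le a$) integrated against the spectral measure of $\Delta$: for real exponents $a\ge b$ one has ${\rm Dom}(\Delta^{a})\subseteq{\rm Dom}(\Delta^{b})$ and $\|\Delta^{b}v\|\le\lambda_0^{\,b-a}\|\Delta^{a}v\|$; in particular $\Delta^{c}$ is bounded, injective, of norm $\lambda_0^{\,c}$ whenever $c\le0$; and $\|\Delta^{a}v\|\ge\lambda_0^{\,a}\|v\|$ whenever $a\ge0$. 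These are the only analytic inputs.

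Next I would treat the second assertion, since it makes the embedding statement for nonnegative exponents immediate. Fix $s\ge0$ and put $A=\Delta^{s/r}$, a positive self-adjoint, hence closed, operator, so that ${\rm Dom}(A)$ is complete for the graph norm $v\mapsto(\|v\|^{2}+\|Av\|^{2})^{1/2}$. The bound $\|Av\|\ge\lambda_0^{\,s/r}\|v\|$ shows that on ${\rm Dom}(\Delta^{s/r})$ this graph norm is equivalent to $\|\cdot\|_{s}=\|A\cdot\|$, so $({\rm Dom}(\Delta^{s/r}),\|\cdot\|_{s})$ is itself complete; being by Definition~\ref{Sobolev} a dense subspace of $\Hi^{s}$, it must equal $\Hi^{s}$.

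For the first assertion, fix $s\ge t$ and let $E={\rm Dom}(\Delta^{s/r})$, which is dense in $\Hi^{s}$ by construction. Writing $\Delta^{t/r}v=\Delta^{(t-s)/r}\Delta^{s/r}v$ and using the inequalities above (with $(t-s)/r\le0$) gives $E\subseteq{\rm Dom}(\Delta^{t/r})$ and $\|v\|_{t}\le\lambda_0^{\,(t-s)/r}\|v\|_{s}$ for $v\in E$; hence $\|\cdot\|_{s}$-Cauchy sequences in $E$ are $\|\cdot\|_{t}$-Cauchy, and the inclusion extends by density to a continuous linear map $\iota\colon\Hi^{s}\to\Hi^{t}$ of norm at most $\lambda_0^{\,(t-s)/r}$. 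The one point that genuinely requires an argument, and the sole real subtlety in the lemma, is that $\iota$ is injective, i.e.\ that $\Hi^{s}$ truly embeds in $\Hi^{t}$ rather than merely mapping continuously into it. For this I would use the unitaries $\Delta^{s/r}\colon\Hi^{s}\to\Hi^{0}=\Hi$ and $\Delta^{t/r}\colon\Hi^{t}\to\Hi$ from the remark following Definition~\ref{Sobolev}: on the dense subspace $E$ one checks $\Delta^{t/r}\circ\iota=\Delta^{(t-s)/r}\circ\Delta^{s/r}$, so this identity holds on all of $\Hi^{s}$ by continuity; the right-hand side is the composite of a unitary with the injective bounded operator $\Delta^{(t-s)/r}$, hence injective, and therefore so is $\iota$. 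This completes the proof.
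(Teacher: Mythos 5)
Your proof is correct. The paper states this lemma without proof, recalling it as a standard fact about the Sobolev scale attached to $\Delta$ (with references to the literature), so there is no argument of the paper's to compare with; what you write is exactly the spectral-calculus justification the paper implicitly relies on. Both halves are handled soundly: the identification $\Hi^s={\rm Dom}(\Delta^{\frac{s}{r}})$ for $s\ge 0$ via the equivalence of $\|\cdot\|_s$ with the graph norm of the closed operator $\Delta^{\frac{s}{r}}$ (using $\|\Delta^{\frac{s}{r}}v\|\ge \lambda_0^{\frac{s}{r}}\|v\|$), and the continuity of the inclusion for all real $s\ge t$, where your explicit verification that the extended map $\Hi^s\to\Hi^t$ is injective, via the unitaries $\Delta^{\frac{s}{r}}:\Hi^s\to\Hi$, $\Delta^{\frac{t}{r}}:\Hi^t\to\Hi$ and the injectivity of the bounded operator $\Delta^{\frac{t-s}{r}}$, supplies a point that such statements (including the paper's) usually leave tacit, and it works for negative exponents as well.
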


 The function $s \mapsto \Hi^s$ is decreasing with respect to the inclusion, so $  \displaystyle{\Hi^{\infty}=\bigcap_{s\in \mathbb{R}}\Hi^{s}}$. For every $z\in \mathbb{C}$, the operators $\Delta^z$ defined by functional calculus belongs to ${\rm End}(\Hi^{\infty})$\label{:andelta}.
 The subspace $\Hi^{\infty}$ is dense in  ${\rm Dom}(\Delta^z)$ for all $z\in \mathbb{C}$, in particular $\Hi^{\infty}$ is dense in $\Hi^s$ for every $s\in \mathbb{R}$.   For any $z\in \mathbb{C}$ and $s\in \mathbb{R}$, the operator $\Delta_{|\Hi^{\infty}}^z:\Hi^{\infty}\rightarrow  \Hi^{\infty}$  extends to a unitary operator $\Delta^z:\Hi^{s+r{\rm Re}(z)}(\Delta)\rightarrow \Hi^s(\Delta)$.

\begin{df}

For every $t\in \mathbb{R}$, denote by ${\rm Op}^t$  the subspace of ${\rm End}(\Hi^{\infty})$ made of operators which admit a continuous extension from $\Hi^{s+t}$ to $\Hi^s$ for every $s\in \mathbb{R}$ and let
\begin{equation*}  {\rm Op}:=\bigcup_{t\in \mathbb{R}}{\rm Op}^t~~{\text and}~~{\rm Op}^{-\infty}:=\bigcap_{t\in \mathbb{R}}{\rm Op}^t. \end{equation*}
\end{df}

An operator in ${\rm Op}^{-\infty}$ is in particular bounded on $\Hi^0$ and maps ${\cal H}^0$ into $\Hi^{\infty}$. The family $(Op^t)_{t\in \mathbb{R}}$ is an increasing filtration of the algebra ${\rm Op}$. If follows that ${\rm Op}^0$ is a subalgebra of ${\rm Op}$ and ${\rm Op}^{-\infty} \subseteq {\rm Op}$. For every $t\le 0$, ${\rm Op}^t$ is a two-sided ideal of  ${\rm Op}^0$.

\begin{df} $\label{lem:Opalg}$
Let  $T\in {\rm Op}$ and $t\in \mathbb{R}$. We say that $T$ has an    analytic order  less than or equal to $t$ if $T \in Op^t$.
\end{df}

\begin{pr} Let $\Delta$  be a self adjoint, positive and invertible linear operator on $\Hi$ with compact resolvent. Then,
 \begin{equation*}\label{analdeltaz} \forall  z\in \mathbb{C},~~ \Delta^{z} \in {\rm Op}^{r{\rm Re}(z)}.\end{equation*}
\end{pr}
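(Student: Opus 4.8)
The plan is to reduce the statement to two facts already available: the unitarity of the \emph{real} powers $\Delta^{m/r}:\Hi^s\to\Hi^{s-m}$ recorded just after the definition of the Sobolev spaces $\Hi^s$, and the elementary fact that the \emph{imaginary} power $\Delta^{i\beta}$ is unitary on $\Hi=\Hi^0$. Write $z=\alpha+i\beta$ with $\alpha={\rm Re}(z)$, $\beta={\rm Im}(z)$. Since $\Delta$ is self adjoint, positive and invertible, its spectrum lies in $[\lambda_0,+\infty)$ with $\lambda_0>0$, so $\Delta^{i\beta}$, defined by functional calculus from $\lambda\mapsto\lambda^{i\beta}$ (a function of modulus $1$ on the spectrum), is a unitary operator of $\Hi$ onto itself with inverse $\Delta^{-i\beta}$.

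Next I would compute the norm $\|.\|_s$ of $\Delta^z v$ for $v\in\Hi^\infty$. All the operators $\Delta^w$ ($w\in\mathbb{C}$) and $\Delta^{t/r}$ ($t\in\mathbb{R}$) are defined on $\Hi^\infty$ by functional calculus, preserve $\Hi^\infty$, and their exponents add there, so on $\Hi^\infty$ one has $\Delta^{s/r}\Delta^z=\Delta^{i\beta}\,\Delta^{(s+r\alpha)/r}$. Using the definition of $\|.\|_s$ and the unitarity of $\Delta^{i\beta}$ on $\Hi$,
$$\|\Delta^z v\|_s=\|\Delta^{s/r}\Delta^z v\|_{\Hi}=\|\Delta^{i\beta}\Delta^{(s+r\alpha)/r}v\|_{\Hi}=\|\Delta^{(s+r\alpha)/r}v\|_{\Hi}=\|v\|_{s+r\alpha}.$$
Thus $\Delta^z$ is an isometry from $(\Hi^\infty,\|.\|_{s+r\alpha})$ into $(\Hi^\infty,\|.\|_s)$, and the same computation applied to $-z$ shows $\Delta^{-z}$ is an isometry in the opposite direction; since $\Delta^z\Delta^{-z}=\Delta^{-z}\Delta^z={\rm Id}$ on $\Hi^\infty$, these are mutually inverse on $\Hi^\infty$.

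Because $\Hi^\infty$ is dense in $\Hi^{s+r\alpha}$ and in $\Hi^s$ (recalled before the statement), the two maps extend uniquely to mutually inverse isometries between $\Hi^{s+r\alpha}$ and $\Hi^s$, so $\Delta^z$ extends to a unitary operator $\Hi^{s+r{\rm Re}(z)}\to\Hi^s$ — in particular a continuous one. As this holds for every $s\in\mathbb{R}$, the operator $\Delta^z$ admits a continuous extension from $\Hi^{s+r{\rm Re}(z)}$ to $\Hi^s$ for all $s$, which is precisely the definition of $\Delta^z\in{\rm Op}^{r{\rm Re}(z)}$. In effect the proposition only makes precise the assertion announced just above it; the one substantive ingredient is the unitarity of $\Delta^{i\beta}$ on $\Hi$ — and I expect no real obstacle beyond keeping the bookkeeping of complex exponents in the functional calculus consistent (it is essential that the $\Delta^{i\beta}$ factor be pulled out \emph{after} applying $\Delta^{s/r}$, so that only its unitarity on $\Hi^0$, not on $\Hi^s$, is needed).
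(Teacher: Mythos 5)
Your proof is correct and follows essentially the same route as the paper: split $\Delta^z=\Delta^{i\,{\rm Im}(z)}\Delta^{{\rm Re}(z)}$, use the unitarity of the imaginary power together with the fact that real powers shift the Sobolev index, and conclude $\|\Delta^z v\|_s=\|v\|_{s+r{\rm Re}(z)}$ on $\Hi^{\infty}$. Your explicit density/extension step and the reduction of unitarity of $\Delta^{i\beta}$ to $\Hi^0$ are just slightly more detailed bookkeeping than the paper's one-line computation.
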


\begin{proof} For every $s\in \mathbb{R}$ and $z\in \mathbb{C}$, the spectral calculus proves that $\Delta^{i{\rm Im}(z)}$ is a unitary operator on $\Hi^s$.  Let be $u \in \Hi^{\infty}$, then
$$
\|\Delta^z u\|_s=\|\Delta^{i{\rm Im}(z)}\Delta^{{\rm Re}(z)}u\|_s
=\|\Delta^{{\rm Re}(z)}u\|_s
=\|u\|_{s+r{\rm Re}(z)}.
$$
\end{proof}

 \begin{lem} $\label{cor:Op}$ Let $\Delta$ be  a self adjoint, positive and invertible linear operator on $\Hi$ with compact resolvent. For every $t\in \mathbb{R}$,
\begin{equation*} {\rm Op}^t=\Delta^{\frac{t}{r}}{\rm Op}^0={\rm Op}^0\Delta^{\frac{t}{r}}.\end{equation*}
\end{lem}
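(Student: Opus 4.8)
The plan is to prove the two set equalities ${\rm Op}^t = \Delta^{t/r}{\rm Op}^0$ and ${\rm Op}^t = {\rm Op}^0\Delta^{t/r}$ by mutual inclusion, using the fact established just above that $\Delta^{t/r} \in {\rm Op}^t$ (it is the case $z = t/r$ real of the preceding Proposition), together with the already-noted multiplicativity ${\rm Op}^{t_1}{\rm Op}^{t_2} \subseteq {\rm Op}^{t_1+t_2}$ of the filtration on ${\rm Op}$. For the inclusion $\Delta^{t/r}{\rm Op}^0 \subseteq {\rm Op}^t$: if $T \in {\rm Op}^0$ then $T$ extends continuously $\Hi^s \to \Hi^s$ for all $s$, and $\Delta^{t/r}$ extends to a (unitary) map $\Hi^{s} \to \Hi^{s-t}$ for all $s$ by the remark after Definition \ref{Sobolev}; composing, $\Delta^{t/r}T$ extends continuously $\Hi^{s} \to \Hi^{s-t}$, i.e. $\Delta^{t/r}T \in {\rm Op}^t$. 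The inclusion ${\rm Op}^0\Delta^{t/r} \subseteq {\rm Op}^t$ is symmetric: $T\Delta^{t/r}$ sends $\Hi^{s+t} \to \Hi^{s} \to \Hi^s$.

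For the reverse inclusions I would use that $\Delta^{t/r}$ is \emph{invertible} in ${\rm Op}$, with inverse $\Delta^{-t/r} \in {\rm Op}^{-t}$ (again the Proposition, with $z = -t/r$), and that $\Delta^{t/r}\Delta^{-t/r} = \Delta^{-t/r}\Delta^{t/r} = {\rm Id}$ by functional calculus on $\Hi^\infty$. Given $T \in {\rm Op}^t$, write $T = \Delta^{t/r}(\Delta^{-t/r}T)$. It remains to check $S := \Delta^{-t/r}T \in {\rm Op}^0$: indeed $\Delta^{-t/r} \in {\rm Op}^{-t}$ extends $\Hi^{s} \to \Hi^{s+t}$, and $T \in {\rm Op}^t$ extends $\Hi^{s} \to \Hi^{s-t}$, so $S$ extends $\Hi^{s} \to \Hi^{s-t} \to \Hi^{s}$ continuously for every $s$, hence $S \in {\rm Op}^0$ and $T \in \Delta^{t/r}{\rm Op}^0$. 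Writing instead $T = (T\Delta^{-t/r})\Delta^{t/r}$ and checking $T\Delta^{-t/r} \in {\rm Op}^0$ the same way gives $T \in {\rm Op}^0\Delta^{t/r}$.

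This argument is essentially a formal manipulation once one has (i) $\Delta^{\pm t/r} \in {\rm Op}^{\pm t}$ and (ii) the composition/filtration property of ${\rm Op}$, both already available. The only point deserving care — and the closest thing to an obstacle — is making sure the algebraic identities $\Delta^{t/r}\Delta^{-t/r} = {\rm Id}$ used to split $T$ are legitimate at the level of extended operators and not merely on $\Hi^\infty$; this is fine because all the operators in sight are the continuous extensions of endomorphisms of the dense subspace $\Hi^\infty$, on which the identities hold by the spectral theorem, and continuous extensions of operators that agree on a dense subspace agree. One then concludes $\Delta^{t/r}{\rm Op}^0 = {\rm Op}^t = {\rm Op}^0\Delta^{t/r}$, which is the claim.
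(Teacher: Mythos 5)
Your argument is correct and is essentially the paper's own proof: both rest on $\Delta^{\pm t/r}\in{\rm Op}^{\pm t}$ and the filtration property of ${\rm Op}$, the only difference being that the paper compresses the factorization $T=\Delta^{t/r}(\Delta^{-t/r}T)$ into a single chain of inclusions ${\rm Op}^t=\Delta^{t/r}\Delta^{-t/r}{\rm Op}^t\subseteq\Delta^{t/r}{\rm Op}^0\subseteq{\rm Op}^t$ (and treats the second equality as ``similar''), while you spell out the two mutual inclusions and both equalities explicitly.
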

\begin{proof} We prove the first equality, the second admits a similar proof. For any $t\in \mathbb{R}$,
$$Op^t=\Delta^0Op^t=\Delta^{\frac{t}{r}}\Delta^{-\frac{t}{r}}Op^t \subseteq \Delta^{\frac{t}{r}}Op^{-t}Op^t \subseteq \Delta^{\frac{t}{r}}Op^0\subseteq Op^tOp^0 \subseteq Op^t.$$
\end{proof}

\begin{cor}\label{prop:OpL}
Let $p\in \mathbb{R}$ and denote by ${\cal L}^p(\Hi)$ the ideal of  $p$-Schatten operators. If $\Delta^{-\frac{1}{r}}\in {\cal L}^p(\Hi)$ with $p\ge1$, then for every  $0<t\le p$, we have ${\rm Op}^{-t} \subset {\cal L}^{\frac{p}{t}}(\Hi)$.
\end{cor}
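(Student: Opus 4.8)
The plan is to combine the factorization $\mathrm{Op}^{-t}=\Delta^{-\frac{t}{r}}\,\mathrm{Op}^0$ furnished by Lemma~\ref{cor:Op} with two elementary facts about Schatten classes: that raising a positive operator to a power $t$ divides the Schatten exponent by $t$, and that each $\mathcal{L}^q(\Hi)$ with $q\ge 1$ is a two-sided ideal of the algebra ${\cal B}(\Hi)$ of bounded operators on $\Hi$.

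First I would record the index bookkeeping: since $0<t\le p$ we have $\tfrac{p}{t}\ge 1$, so that ${\cal L}^{\frac{p}{t}}(\Hi)$ is indeed a genuine two-sided ideal of ${\cal B}(\Hi)$. Next, since $\Delta$ is self-adjoint, positive and invertible with compact resolvent, its spectrum is the sequence $0<\lambda_0\le\lambda_1\le\cdots\to+\infty$, and $\Delta^{-\frac{1}{r}}$ is a positive compact operator with sequence of singular values $(\lambda_n^{-1/r})_n$; the hypothesis $\Delta^{-\frac{1}{r}}\in{\cal L}^p(\Hi)$ means precisely $\sum_n\lambda_n^{-p/r}<\infty$. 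The operator $\Delta^{-\frac{t}{r}}$, defined by functional calculus, is again positive compact, with singular values $(\lambda_n^{-t/r})_n$, and
\begin{equation*}
\sum_n\bigl(\lambda_n^{-t/r}\bigr)^{\frac{p}{t}}=\sum_n\lambda_n^{-p/r}<\infty ,
\end{equation*}
so $\Delta^{-\frac{t}{r}}\in{\cal L}^{\frac{p}{t}}(\Hi)$.

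Finally, let $T\in\mathrm{Op}^{-t}$. By Lemma~\ref{cor:Op} we may write $T=\Delta^{-\frac{t}{r}}S$ with $S\in\mathrm{Op}^0$; in particular, taking $s=0$ in the definition of $\mathrm{Op}^0$ and using $\Hi^0=\Hi$, the operator $S$ extends to a bounded operator on $\Hi$. Since $\Delta^{-\frac{t}{r}}\in{\cal L}^{\frac{p}{t}}(\Hi)$ and the latter is a two-sided ideal of ${\cal B}(\Hi)$, the product $T=\Delta^{-\frac{t}{r}}S$ belongs to ${\cal L}^{\frac{p}{t}}(\Hi)$, which is the assertion.

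I do not expect a real obstacle here: essentially all the work has been done in Lemma~\ref{cor:Op}. The only points that need a word of care are the (harmless) identification of an element of $\mathrm{Op}^{-t}\subset\mathrm{Op}^0$ with its bounded extension to $\Hi^0=\Hi$, and the verification of the constraint $\tfrac{p}{t}\ge 1$, which is exactly what makes the Schatten ideal property available and is guaranteed by the hypothesis $t\le p$.
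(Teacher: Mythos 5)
Your proof is correct and follows essentially the same route as the paper: the factorization ${\rm Op}^{-t}=\Delta^{-\frac{t}{r}}{\rm Op}^0\subset\Delta^{-\frac{t}{r}}{\cal B}(\Hi)$ from Lemma~\ref{cor:Op} combined with the two-sided ideal property of the Schatten classes. The only difference is that you spell out, via the singular values $(\lambda_n^{-t/r})_n$, why $\Delta^{-\frac{t}{r}}\in{\cal L}^{\frac{p}{t}}(\Hi)$ and why $\frac{p}{t}\ge 1$, details the paper leaves implicit.
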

\begin{proof}
From Lemma  $\ref{cor:Op}$ we have for every $t\in \mathbb{R}$, ${\rm Op}^{-t}= \Delta^{-\frac{t}{r}}{\rm Op}^0\subset\Delta^{-\frac{t}{r}}{\cal B}(\Hi)$. Let $p\ge 1$,  the inclusions follow since ${\cal L}^p(\Hi)$ is a two-sided ideal of ${\cal B}(\Hi)$.
\end{proof}
\begin{lem}  Any  algebra of generalized differential operators $\left ({\cal D},\Delta,r\right)$
 is a filtered subalgebra of $Op$, that is for every $q\in \mathbb{N}$ we have:
\begin{equation*}\label{DqincluOpq} {\cal D}^q\subseteq {\rm Op}^q. \end{equation*}
\end{lem}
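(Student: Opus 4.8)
Put ${\rm ad}_\Delta=[\Delta,\,\cdot\,]$; by axiom~3, $({\rm ad}_\Delta)^{k}(X)\in{\cal D}^{q+k(r-1)}$ for every $X\in{\cal D}^{q}$ and every $k\in\N$. By Lemma~$\ref{cor:Op}$ it is enough to show $X\Delta^{-q/r}\in{\rm Op}^{0}$, i.e.\ that $X$ has a bounded extension $\Hi^{s+q}\to\Hi^{s}$ for every $s\in\R$; I will prove this for all elements of ${\cal D}$ simultaneously (this is needed to feed the iterated commutators back into the argument), distinguishing the cases $s=0$, $s>0$, $s<0$. For $s=0$: the generalized G{\aa}rding inequality gives $\|Xv\|_{0}\le\varepsilon^{-1}(\|v\|_{q}+\|v\|_{0})$ on $\Hi^{\infty}$, and $\|v\|_{0}\le c\|v\|_{q}$ by Lemma~$\ref{lem:Hst}$ (as $q\ge 0$), so $X$ extends continuously to $\Hi^{q}\to\Hi^{0}$.

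For $s>0$: an induction yields, in ${\rm End}(\Hi^{\infty})$, the identity $\Delta^{m}X=\sum_{k=0}^{m}\binom{m}{k}({\rm ad}_\Delta)^{k}(X)\Delta^{m-k}$; applying the case $s=0$ to each $({\rm ad}_\Delta)^{k}(X)\in{\cal D}^{q+k(r-1)}$ and composing with the continuous embeddings $\Hi^{q+rk}\hookrightarrow\Hi^{q+k(r-1)}$ of Lemma~$\ref{lem:Hst}$ shows that every summand is bounded $\Hi^{q+rm}\to\Hi^{0}$, hence $X$ extends boundedly $\Hi^{q+rm}\to\Hi^{rm}$ for all $m\in\N$. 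For non-integral exponents, fix $u,v\in\Hi^{\infty}$ and consider $g(z)=(\Delta^{z}X\Delta^{-z-q/r}u\mid v)_{0}$: it is entire, bounded on each strip $0\le{\rm Re}(z)\le m$ (the $\Hi^{N}$-norms of $\Delta^{-z-q/r}u$ depend only on ${\rm Re}(z)$, $X$ is bounded $\Hi^{N}\to\Hi^{N-q}$ for large $N$ by the previous line, and $\Delta^{z}$ is unitary up to a factor fixed by ${\rm Re}(z)$), and on the boundary lines $|g(it)|\le\|X\|_{\Hi^{q}\to\Hi^{0}}\|u\|_{0}\|v\|_{0}$ while $|g(m+it)|\le\|\Delta^{m}X\Delta^{-m-q/r}\|_{{\cal B}(\Hi^{0})}\|u\|_{0}\|v\|_{0}$ (finite by the previous line). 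Hadamard's three-lines theorem then gives $\|\Delta^{a/r}X\Delta^{-(a+q)/r}\|_{{\cal B}(\Hi^{0})}<\infty$ for $a\in[0,rm]$, hence for all $a\ge 0$: $X$ extends boundedly $\Hi^{q+a}\to\Hi^{a}$ for every $a\ge 0$.

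For $s<0$: write $\Delta^{s/r}X=X\Delta^{s/r}+[\Delta^{s/r},X]$; the first term is bounded $\Hi^{s+q}\to\Hi^{0}$ since $\Delta^{s/r}$ is unitary $\Hi^{s+q}\to\Hi^{q}$. For the commutator, use on $\Hi^{\infty}$ the Cauchy representation $\Delta^{s/r}=\frac{1}{2\pi i}\int_{\Gamma}\lambda^{s/r}(\lambda-\Delta)^{-1}d\lambda$ (absolutely convergent, as ${\rm Re}(s/r)<0$), the identity $[(\lambda-\Delta)^{-1},Y]=(\lambda-\Delta)^{-1}[\Delta,Y](\lambda-\Delta)^{-1}$, and Cauchy's formula $\frac{1}{2\pi i}\int_{\Gamma}\lambda^{s/r}(\lambda-\Delta)^{-k-1}d\lambda=\pm\binom{s/r}{k}\Delta^{s/r-k}$, to obtain
\begin{align*}
[\Delta^{s/r},X]&=\textstyle\sum_{k=1}^{N}\pm\binom{s/r}{k}({\rm ad}_\Delta)^{k}(X)\,\Delta^{s/r-k}+R_{N},\\
R_{N}&=\pm\frac{1}{2\pi i}\int_{\Gamma}\lambda^{s/r}(\lambda-\Delta)^{-1}({\rm ad}_\Delta)^{N+1}(X)(\lambda-\Delta)^{-N-1}\,d\lambda .
\end{align*}
Each summand is bounded $\Hi^{s+q}\to\Hi^{0}$: $\Delta^{s/r-k}$ is unitary $\Hi^{s+q}\to\Hi^{q+rk}$, and since $q+rk=(q+k(r-1))+k$ with $k\ge 0$, the case $s>0$ gives $({\rm ad}_\Delta)^{k}(X):\Hi^{q+rk}\to\Hi^{k}\hookrightarrow\Hi^{0}$. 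For $R_{N}$, choose $N>-s$ and use $\|(\lambda-\Delta)^{-1}\|_{{\cal B}(\Hi^{a},\Hi^{a+\beta})}\le C(1+|\lambda|)^{-1+\beta/r}$ $(0\le\beta\le r,\ \text{all }a)$: the $N+1$ resolvents on the right lift $\Hi^{s+q}$ to $\Hi^{s+q+(N+1)r}$, on which $({\rm ad}_\Delta)^{N+1}(X)$, of order $q+(N+1)(r-1)$, acts boundedly into $\Hi^{s+N+1}$ with $s+N+1\ge 0$ (this uses only the mapping at nonnegative level established above), the remaining resolvent then reaches $\Hi^{0}$ and supplies $(1+|\lambda|)^{-1}$, so the integrand is $O(|\lambda|^{s/r-1})$ and $R_{N}\in{\cal B}(\Hi^{s+q},\Hi^{0})$. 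Hence $X$ extends boundedly $\Hi^{s+q}\to\Hi^{s}$; together with the earlier cases, $X\in{\rm Op}^{q}$, that is ${\cal D}^{q}\subseteq{\rm Op}^{q}$.

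The only delicate part is the case $s<0$: the Sobolev-index bookkeeping in the iterated resolvent expansion and the convergence of the remainder $R_{N}$. It works because $(\lambda-\Delta)^{-N-1}$ standing on the right of $({\rm ad}_\Delta)^{N+1}(X)$ gains $(N+1)r$ derivatives while $({\rm ad}_\Delta)^{N+1}(X)$ costs only $q+(N+1)(r-1)$, leaving a surplus of $N+1$ that absorbs the deficit $-s$ once $N>-s$; this is a by-hand instance of the commutator Taylor expansion recorded later as Lemma~$\ref{lem:Taylor}$, which of course cannot be invoked at this point. The cases $s=0$ and $s>0$ are routine, the latter modulo the three-lines argument.
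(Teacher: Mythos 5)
Your proof is correct (up to routine verifications, such as the holomorphy and boundedness needed for the three-lines argument), but it takes a genuinely different --- and far more detailed --- route than the paper. The paper disposes of the lemma in one line: it invokes the continuous embedding $\Hi^{s+q}\subset\Hi^{s}$ of Lemma~\ref{lem:Hst}, the boundedness of $\Delta^{q/r}:\Hi^{s+q}\to\Hi^{s}$, and the generalized G{\aa}rding inequality. Since that inequality is stated only in the $\Hi^{0}$-norm, the paper's argument explicitly yields only the bound $\Hi^{q}\to\Hi^{0}$, i.e.\ your case $s=0$; the passage to arbitrary $s\in\R$ required by the definition of ${\rm Op}^{q}$ is left implicit. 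Your proposal supplies exactly this missing uniformity: for $s>0$ you combine the identity $\Delta^{m}X=\sum_{k}\binom{m}{k}X^{(k)}\Delta^{m-k}$ (which the paper proves separately as Lemma~\ref{lemDeltaalg}) with Hadamard three-lines interpolation, and for $s<0$ you rederive by hand the resolvent expansion of $[\Delta^{s/r},X]$, a special case of the Connes--Moscovici formula recorded later as Lemma~\ref{lem:Taylor}; you are right that this lemma cannot simply be cited here, since its order statements rest on the inclusion ${\cal D}^{q}\subset{\rm Op}^{q}$ being proved, so your by-hand derivation avoids a circularity, and your Sobolev bookkeeping for the summands and for $R_{N}$ (with $N>-s$) checks out. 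What each approach buys: the paper's proof is short and reads smoothly if one grants the G{\aa}rding estimate at every Sobolev level, but as written it is only a sketch; yours is self-contained and complete, at the cost of length and of anticipating machinery the paper develops immediately afterwards anyway. One small remark: the appeal to Lemma~\ref{cor:Op} at the start is superfluous, since you then verify directly the defining property of ${\rm Op}^{q}$.
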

\begin{proof} Let $q \in \mathbb{N}$, as $s\le q+s$  the inclusion from $\Hi^{s+q}$ in $\Hi^s$ is continuous (Lemma $\ref{lem:Hst}$), and $\Delta^{\frac{q}{r}}$ is bounded from $H^{s+q}$ to $H^s$. Hence the lemma is a consequence of the generalized G{\aa}rding inequality $\eqref{ellip}$.
\end{proof}

 Let $\left ({\cal D},\Delta,r\right) $ be an algebra of generalized differential operators, denote by ${\cal D}[\Delta]$  the vector space of polynomials in $\Delta$ with coefficients in ${\cal D}$.

\begin{lem} Let $\left ({\cal D},\Delta,r\right) $ be an algebra of generalized differential operators and
 let $X\in {\rm End}(\Hi^{\infty})$.  For every $k\in \mathbb{N}$ we denote by $X^{(k)}$ the $k$-fold commutator ${\rm ad}^k\Delta(X)$. For every $k\in \mathbb{N}$ , if $X \in {\cal D}^{q_X}$ then $X^{(k)}\in {\cal D}^{q_X+k(r-1)}$.
\end{lem}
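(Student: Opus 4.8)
The plan is a straightforward induction on $k$, using only the third axiom of Definition \ref{algdif}. For the base case $k=0$ one has $X^{(0)} = {\rm ad}^0\Delta(X) = X$ by definition, and since $q_X + 0\cdot(r-1) = q_X$, the hypothesis $X \in {\cal D}^{q_X}$ is already the assertion $X^{(0)} \in {\cal D}^{q_X + 0\cdot(r-1)}$.

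For the inductive step, assume that $X^{(k)} \in {\cal D}^{q_X + k(r-1)}$ for some $k \in \mathbb{N}$. By definition of the iterated commutator, $X^{(k+1)} = {\rm ad}^{k+1}\Delta(X) = [\Delta, {\rm ad}^k\Delta(X)] = [\Delta, X^{(k)}]$. Applying the third axiom of Definition \ref{algdif} to the element $X^{(k)}$, whose order is at most $q_X + k(r-1)$, yields $[\Delta, X^{(k)}] \in {\cal D}^{q_X + k(r-1) + (r-1)} = {\cal D}^{q_X + (k+1)(r-1)}$, which is precisely the claim at level $k+1$. This closes the induction.

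The only point that deserves to be flagged is that axiom 3 of Definition \ref{algdif} is stated for elements of ${\cal D}$, so to apply it at the $k$-th stage one needs to know that $X^{(k)}$ lies in ${\cal D}$ (and not merely in ${\rm End}(\Hi^{\infty})$); this is exactly what the induction hypothesis provides, so no extra argument is required. Hence there is no real obstacle here: the statement is just axiom 3 iterated $k$ times. It is worth noting, moreover, that together with the inclusion ${\cal D}^q \subseteq {\rm Op}^q$ established just above, this lemma also shows that ${\rm ad}^k\Delta$ raises the analytic order by at most $k(r-1)$, a fact that will be convenient for the Taylor expansions of commutators $[X,\Delta^z]$ used later.
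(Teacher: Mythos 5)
Your proof is correct and follows essentially the same route as the paper: induction on $k$, applying the third axiom of Definition \ref{algdif} (invariance of ${\cal D}$ under $[\Delta,\cdot]$ with order increase at most $r-1$) at each step.
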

\begin{proof} The algebra ${\cal D}$ is invariant under derivation $[\Delta,.]$, so the operator $X \mapsto X^{(k)}$ is a linear endomorphism of ${\cal D}$, respectively ${\cal D}[\Delta]$. As $\Delta$ is a generalized laplacian  of order  $r$, by definition we have for every $X \in {\cal D}^{q_X}$,  $X^{(1)}\in {\cal D}^{q_X+r-1}$. The result about the order of $X^{(k)}$ follows by induction.
\end{proof}

\begin{lem}$\label{lemDeltaalg} $
 Let $\left ({\cal D},\Delta,r\right) $ be an algebra of generalized differential operators.
For every $X\in {\cal D}$ and every $i\in \mathbb{N}^*$ we have the exact formula
  \begin{equation*} [\Delta^i,X]= \sum_{k=1}^i \binom{i}{k}X^{(k)}\Delta^{i-k}. \end{equation*}

 \end{lem}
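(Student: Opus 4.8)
The plan is to deduce the statement from the unsigned identity
$$\Delta^{i}X \;=\; \sum_{k=0}^{i}\binom{i}{k}X^{(k)}\Delta^{\,i-k}\qquad\text{in }{\rm End}(\Hi^{\infty}),$$
with the convention $X^{(0)}=X$; subtracting the $k=0$ term $X\Delta^{i}$ from both sides and using $[\Delta^{i},X]=\Delta^{i}X-X\Delta^{i}$ then yields the claimed formula. No domain issue arises: each power of $\Delta$ restricts to an element of ${\rm End}(\Hi^{\infty})$ as recalled above, ${\cal D}$ is stable under ${\rm ad}\Delta=[\Delta,\cdot]$, and by the previous lemma each $X^{(k)}$ lies in ${\cal D}$, so every product below is a genuine composition of endomorphisms of $\Hi^{\infty}$.

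For the unsigned identity I would argue inside the associative algebra ${\rm End}(\Hi^{\infty})$ using the left and right multiplication operators $L_{\Delta},R_{\Delta}$ by $\Delta$. They commute (associativity), and $R_{\Delta}$ commutes with ${\rm ad}\Delta=L_{\Delta}-R_{\Delta}$ (a one-line check, or simply because ${\rm ad}\Delta$ is a derivation that kills $\Delta$). Hence $L_{\Delta}=R_{\Delta}+{\rm ad}\Delta$ is a sum of two commuting operators on ${\rm End}(\Hi^{\infty})$, and the classical binomial theorem gives
$$L_{\Delta}^{\,i}=\sum_{k=0}^{i}\binom{i}{k}({\rm ad}\Delta)^{k}R_{\Delta}^{\,i-k}.$$
Evaluating at $X$ and using $({\rm ad}\Delta)^{k}R_{\Delta}^{\,i-k}(X)=R_{\Delta}^{\,i-k}\big(({\rm ad}\Delta)^{k}(X)\big)=X^{(k)}\Delta^{\,i-k}$ produces exactly $\Delta^{i}X=\sum_{k=0}^{i}\binom{i}{k}X^{(k)}\Delta^{\,i-k}$.

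Alternatively one may proceed by a direct induction on $i$: the case $i=1$ is the definition $[\Delta,X]=X^{(1)}$; for the inductive step, multiply $\Delta^{i}X=\sum_{k=0}^{i}\binom{i}{k}X^{(k)}\Delta^{\,i-k}$ on the left by $\Delta$, replace each $\Delta X^{(k)}$ by $X^{(k)}\Delta+X^{(k+1)}$, reindex the shifted sum, and combine with Pascal's rule $\binom{i}{k}+\binom{i}{k-1}=\binom{i+1}{k}$. There is no real obstacle in either route; the only point requiring care is the bookkeeping of the edge terms (the conventions $X^{(0)}=X$ and $\binom{i}{-1}=\binom{i}{i+1}=0$), so that the reindexed sums match up. I would present the $L_{\Delta}/R_{\Delta}$ argument as the main proof, since it reduces the combinatorics to the ordinary binomial theorem for commuting operators.
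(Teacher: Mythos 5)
Your proof is correct, but your main argument is genuinely different from the paper's. The paper proves the lemma by a direct induction on $i$ at the level of the commutator itself: it writes $[\Delta^{i+1},X]=[\Delta^{i},[\Delta,X]]+[\Delta^{i},X]\Delta+[\Delta,X]\Delta^{i}$, applies the induction hypothesis to the two inner commutators, and recombines the sums with Pascal's rule. You instead work in ${\rm End}(\Hi^{\infty})$ with the left and right multiplications $L_{\Delta},R_{\Delta}$, observe that $L_{\Delta}=R_{\Delta}+{\rm ad}\,\Delta$ is a sum of commuting operators, and invoke the ordinary binomial theorem to get the unsigned identity $\Delta^{i}X=\sum_{k=0}^{i}\binom{i}{k}X^{(k)}\Delta^{i-k}$, from which the commutator formula follows by subtracting the $k=0$ term; all the ingredients ($[L_{\Delta},R_{\Delta}]=0$, $[R_{\Delta},{\rm ad}\,\Delta]=0$, and $({\rm ad}\,\Delta)^{k}R_{\Delta}^{i-k}(X)=X^{(k)}\Delta^{i-k}$) check out, and since everything takes place among genuine endomorphisms of $\Hi^{\infty}$ there is no domain issue. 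Your route buys a cleaner conceptual explanation of where the binomial coefficients come from, pushing all combinatorics into the standard binomial theorem, whereas the paper's induction is more elementary and self-contained (no auxiliary operators on ${\rm End}(\Hi^{\infty})$), at the cost of verifying Pascal's rule by hand; your secondary inductive sketch is essentially the paper's proof transposed to the unsigned identity, so either presentation would be acceptable.
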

\begin{proof} We proceed by induction on $i$. For $i=1$, $\Delta X- X \Delta=[\Delta,X]$. Now,
\begin{gather*}
\begin{split}
[\Delta^{i+1},X]&=[\Delta^i,[\Delta,X]]+[\Delta^i,X]\Delta+[\Delta,X]\Delta^i \\[-6pt]
                &=\sum_{k=1}^i \binom{i}{k}X^{(k+1)}\Delta^{i-k}+\sum_{k=1}^i \binom{i}{k}X^{(k)}\Delta^{i+1-k}+[\Delta,X]\Delta^i\\[-6pt]
                &=X^{(i+1)}+\sum_{k=2}^i (\binom{i}{k-1}+\binom{i}{k})X^{(k)}\Delta^{i+1-k}+iX^{(1)}\Delta^{i}+[\Delta,X]\Delta^i\\[-6pt]
                &=\sum_{k=1}^{i+1} \binom{i+1}{k}X^{(k)}\Delta^{i+1-k}.
\end{split}
\end{gather*}
\end{proof}

For every $X\in {\cal D}$ and for every $i\in \mathbb{N}$, $\Delta^iX \in {\cal D}[\Delta]$,  since $\Delta^iX= X \Delta^i+\sum_{k=1}^i \binom{i}{k}X^{(k)}\Delta^{i-k}$. Hence ${\cal D}[\Delta]$ is a subalgebra of $Op$.\\

We now define for ${\cal D}[\Delta]$ a natural structure of algebra of generalized differential operators for which  $\Delta$ will be  a generalized Laplace operator  of order $r$, and the algebra $\left ({\cal D},\Delta,r\right) $  will be a filtered subalgebra of $\left ({\cal D}[\Delta],\Delta,r\right) $. Let $q\in \mathbb{N}$, we define
\begin{equation*} {\cal D}[\Delta]^q=\sum_{\begin{matrix} q'+rq''=q \cr q',q'' \in \mathbb{N} \end{matrix}} {\cal D}_{q''}^{q'}[\Delta]. \end{equation*}
 As usual, for an algebra ${\cal A}$, ${\cal A}_p[X] \subset {\cal A}[X]$ is the   subspace  of polynomials with degree less than or equal to  $p$ and with coefficients in ${\cal A}$.

\begin{lem} Let $\left ({\cal D},\Delta,r\right) $ be an algebra of generalized differential operators.  The algebra ${\cal D}(\Delta)$ is filtered by $({\cal D}[\Delta]^q)_{q\in \mathbb{N}}$.
\end{lem}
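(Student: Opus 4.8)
The plan is to verify directly the three axioms of an increasing filtration for the family $({\cal D}[\Delta]^q)_{q\in\mathbb{N}}$, relying on the fact that $r\in\mathbb{N}^*$ (so $r-1\in\mathbb{N}$), on the filtration axioms already known for ${\cal D}$, and on the commutator identity of Lemma $\ref{lemDeltaalg}$ together with the order bound $X^{(m)}\in {\cal D}^{q_X+m(r-1)}$ established above. Note first that each ${\cal D}[\Delta]^q$ is a finite sum, since there are only finitely many pairs $(q',q'')\in\mathbb{N}^2$ with $q'+rq''=q$.

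For monotonicity, if $i\le j$ then for any pair $(q',q'')$ with $q'+rq''=i$ the integer $q'+(j-i)$ lies in $\mathbb{N}$ and satisfies $(q'+(j-i))+rq''=j$; since ${\cal D}^{q'}\subseteq {\cal D}^{q'+(j-i)}$ one gets ${\cal D}_{q''}^{q'}[\Delta]\subseteq {\cal D}_{q''}^{q'+(j-i)}[\Delta]\subseteq {\cal D}[\Delta]^j$, and summing over the finitely many such pairs gives ${\cal D}[\Delta]^i\subseteq {\cal D}[\Delta]^j$. For exhaustivity, each ${\cal D}[\Delta]^q$ is a sum of subspaces ${\cal D}_{q''}^{q'}[\Delta]\subseteq {\cal D}[\Delta]$, so ${\cal D}[\Delta]^q\subseteq {\cal D}[\Delta]$; conversely any $P=\sum_{k=0}^m X_k\Delta^k$ with $X_k\in {\cal D}^{q_k}$ satisfies $X_k\Delta^k\in {\cal D}_k^{q_k}[\Delta]\subseteq {\cal D}[\Delta]^{q_k+rk}$, hence, using monotonicity, $P\in {\cal D}[\Delta]^Q$ with $Q=\max_k(q_k+rk)$; thus ${\cal D}[\Delta]=\bigcup_q {\cal D}[\Delta]^q$.

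The only step with real content is multiplicativity, ${\cal D}[\Delta]^i{\cal D}[\Delta]^j\subseteq {\cal D}[\Delta]^{i+j}$. By bilinearity and the definition of the subspaces, it suffices to treat a product $(X\Delta^k)(Y\Delta^l)$ with $X\in {\cal D}^a$, $k\le b$, $a+rb=i$, and $Y\in {\cal D}^c$, $l\le d$, $c+rd=j$. Using Lemma $\ref{lemDeltaalg}$ to commute $\Delta^k$ past $Y$,
\[
X\Delta^k Y\Delta^l = XY\,\Delta^{k+l}+\sum_{m=1}^{k}\binom{k}{m}XY^{(m)}\,\Delta^{k+l-m}.
\]
The leading term has coefficient $XY\in {\cal D}^{a+c}$ and degree $k+l$, hence lies in ${\cal D}_{k+l}^{a+c}[\Delta]\subseteq {\cal D}[\Delta]^{(a+c)+r(k+l)}$, and $(a+c)+r(k+l)=(a+rk)+(c+rl)\le (a+rb)+(c+rd)=i+j$, so monotonicity puts it in ${\cal D}[\Delta]^{i+j}$. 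For $m\ge 1$, the term $XY^{(m)}\Delta^{k+l-m}$ has coefficient in ${\cal D}^{a}{\cal D}^{c+m(r-1)}\subseteq {\cal D}^{a+c+m(r-1)}$ and degree $k+l-m$, hence lies in ${\cal D}[\Delta]^{(a+c+m(r-1))+r(k+l-m)}$, and $(a+c+m(r-1))+r(k+l-m)=(a+rk)+(c+rl)-m\le i+j-m\le i+j$, so monotonicity again applies. I expect this last piece of degree bookkeeping to be the main obstacle; the crucial point is that trading $m$ powers of $\Delta$ for the $m$-fold commutator $Y^{(m)}$ costs only $m(r-1)$ in ${\cal D}$-order instead of $rm$, i.e. each commutation strictly lowers the total filtration degree (using $r\ge 1$), which is exactly what makes the filtration compatible with multiplication.
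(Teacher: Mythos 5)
Your proof is correct and follows essentially the same route as the paper: monotonicity and exhaustion handled directly from the definition, and multiplicativity via the commutation formula of Lemma~\ref{lemDeltaalg} together with the bound $Y^{(m)}\in {\cal D}^{q_Y+m(r-1)}$, followed by the same order bookkeeping showing each commutation drops the total filtration index by one. You even state the expansion with the correct summation bound $\binom{k}{m}$, $1\le m\le k$ (the paper's displayed product has the roles of the two exponents swapped in its indices), so nothing is missing.
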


\begin{proof}
 The inclusions ${\cal D}[\Delta]^{q_1} \subset {\cal D}[\Delta]^{q_2}$ if $q_1 \le q_2$ and the equality $\displaystyle{\cup_{q \in \mathbb{N}} {\cal D}[\Delta]^{q}={\cal D}[\Delta]}$ are immediate. Let $q,q' \in \mathbb{N}$, $X\in {\cal D}^{q_X}$ and $Y \in {\cal D}^{q_Y}$, we suppose $X \Delta^i \in {\cal D}[\Delta] ^q$ and $Y\Delta^j \in {\cal D}[\Delta]^{q'}$. For every $k\in \mathbb{N}$, $Y^{(k)} \in{\cal D}^{q_Y+k(r-1)}$ and for every $k\in \llbracket 0,j \rrbracket$,
\begin{equation*} XY^{(k)}\Delta^{i+j-k} \in {\cal D}_{i+j-k}^{q_X+q_Y+k(r-1)}[\Delta]\subset {\cal D}[\Delta]^{q_X+ri+q_Y+rj-k}\subset {\cal D}[\Delta]^{q+q'}. \end{equation*}

The product
$(X\Delta^i)(Y\Delta^j)=XY \Delta^{i+j}+\sum_{k=1}^j \binom{j}{k}XY^{(k)}\Delta^{i+j-k}$ is then an element ${\cal D}[\Delta]^{q+q'}$ and the inclusion
${\cal D}[\Delta]^{q}{\cal D}[\Delta]^{q'}\subset {\cal D}[\Delta]^{q+q'}$ follows.
\end{proof}

\begin{pr}  Let $\left ({\cal D},\Delta,r\right) $ be an algebra of generalized differential operators.  The triple $\left ({\cal D}[\Delta],\Delta,r\right) $ is also an algebra of generalized differential operators.
\end{pr}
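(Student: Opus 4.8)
The plan is to verify, one at a time, the four conditions of Definition~\ref{algdif} for the triple $({\cal D}[\Delta],\Delta,r)$; since the operator $\Delta$ and the integer $r$ are unchanged, nothing has to be checked about $\Delta$ itself. Condition~1, that $({\cal D}[\Delta]^q)_{q\in\mathbb{N}}$ is an increasing filtration of ${\cal D}[\Delta]$, is exactly the content of the previous lemma. For condition~2, observe that since $r\ge 1$ the only pair $(q',q'')\in\mathbb{N}^2$ with $q'+rq''=0$ is $(0,0)$, so ${\cal D}[\Delta]^0={\cal D}_0^0[\Delta]={\cal D}^0$; hence the unit of ${\cal D}[\Delta]$, when it exists, is that of ${\cal D}$ and lies in ${\cal D}^0={\cal D}[\Delta]^0$.

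For condition~3 I would use that $[\Delta,\cdot\,]$ is linear, so it suffices to treat a homogeneous summand $X=\sum_{i=0}^{q''}X_i\Delta^i\in{\cal D}_{q''}^{q'}[\Delta]$ of ${\cal D}[\Delta]^q$ (so $q'+rq''=q$ and $X_i\in{\cal D}^{q'}$). Since $\Delta$ commutes with every $\Delta^i$, one gets $[\Delta,X]=\sum_{i=0}^{q''}X_i^{(1)}\Delta^i$, with $X_i^{(1)}\in{\cal D}^{q'+r-1}$ by the defining property of the generalized Laplace operator on ${\cal D}$. Therefore $[\Delta,X]\in{\cal D}_{q''}^{q'+r-1}[\Delta]\subseteq{\cal D}[\Delta]^{(q'+r-1)+rq''}={\cal D}[\Delta]^{q+r-1}$, which is condition~3.

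The step I expect to be the real obstacle is the generalized G{\aa}rding inequality (condition~4). First I would record that this inequality is stable under finite sums: if each $X_j$ satisfies $\|\Delta^{q/r}v\|+\|v\|\ge\varepsilon_j\|X_jv\|$ for all $v\in{\cal H}^{\infty}$, then $\sum_j X_j$ satisfies it with $\varepsilon=(\sum_j\varepsilon_j^{-1})^{-1}$. Writing a general element of ${\cal D}[\Delta]^q$ as a finite sum of monomials $Y\Delta^i$ with $Y\in{\cal D}^{q'}$, $0\le i\le q''$ and $q'+rq''=q$, it is thus enough to prove the inequality for one such $Y\Delta^i$. Applying the G{\aa}rding inequality of $({\cal D},\Delta,r)$ to $Y$ with the vector $w=\Delta^i v\in{\cal H}^{\infty}$ gives $\|\Delta^{q'/r+i}v\|+\|\Delta^i v\|\ge\varepsilon_Y\|Y\Delta^i v\|$. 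Both exponents $q'/r+i=(q'+ri)/r$ and $i$ are $\le q/r$, because $q'+ri\le q'+rq''=q$, so it remains to bound $\|\Delta^a v\|$ for $0\le a\le q/r$ by $\|\Delta^{q/r}v\|+\|v\|$. This interpolation estimate comes from the spectral theorem: for all $\lambda>0$ and $0\le a\le b$ one has $\lambda^{2a}\le 1+\lambda^{2b}$, so integrating against the spectral measure of $\Delta$ gives $\|\Delta^a v\|^2\le\|v\|^2+\|\Delta^b v\|^2\le(\|v\|+\|\Delta^b v\|)^2$, i.e. $\|\Delta^a v\|\le\|v\|+\|\Delta^{q/r}v\|$ with $b=q/r$. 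Combining the two displays, $\|Y\Delta^i v\|\le\frac{2}{\varepsilon_Y}(\|\Delta^{q/r}v\|+\|v\|)$, and the sum observation then yields condition~4 on all of ${\cal D}[\Delta]^q$.

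This establishes that $({\cal D}[\Delta],\Delta,r)$ is an algebra of generalized differential operators; incidentally ${\cal D}^q={\cal D}_0^q[\Delta]\subseteq{\cal D}[\Delta]^q$, so $({\cal D},\Delta,r)$ sits inside it as a filtered subalgebra. Beyond the order bookkeeping $q'+ri\le q$, the only point needing genuine care is the interpolation inequality; everything else is formal.
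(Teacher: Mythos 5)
Your proof is correct and takes essentially the same route as the paper: you check the four conditions of Definition~\ref{algdif}, obtain condition~3 from $[\Delta,X\Delta^i]=[\Delta,X]\Delta^i$, and get the G{\aa}rding inequality by applying the inequality of ${\cal D}$ to the vector $\Delta^i v$ and then comparing powers of $\Delta$. Your explicit reduction to monomials and the spectral-calculus interpolation $\|\Delta^a v\|\le\|v\|+\|\Delta^{q/r}v\|$ merely spell out what the paper handles via the continuity of the embedding $\Hi^{\frac{q+ri}{r}}\subset\Hi^{i}$, so the two arguments coincide in substance.
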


\begin{proof}   From Lemma $\ref{lemDeltaalg} $, ${\cal D}[\Delta]$ is already a subalgebra of $Op$.
We prove each point of Definition $\ref{algdif}$.
\begin{enumerate}[nolistsep]
\item The algebra ${\cal D}[\Delta]$ is unitary if ${\cal D}$ is and in this case $1 \in {\cal D}[\Delta]^{0} $.
\item Let $X \Delta^i \in {\cal D}[\Delta]^{q}$ with $X\in {\cal D}^{q_X}$,  then $[\Delta,X\Delta^i]=[\Delta,X]\Delta^i \in {\cal D}[\Delta]^{q_X+r-1+ri}={\cal D}[\Delta]^{q+r-1}$.
\item Proof of generalized G{\aa}rding inequality: let $X \Delta^i \in {\cal D}[\Delta]$ with $order(X)\le q$. We establish the existence of $\varepsilon >0$ such that for any $v \in {\cal H}^{\infty}$ one has
\begin{equation}
\|\Delta^{\frac{q+ri}{r}}v \|+\|v\|\ge \varepsilon \|X\Delta^i v\|.
\end{equation}
There exists $\varepsilon>0 $ such that for every $v \in \Hi^{\infty}(\Delta)$ we have $\|\Delta^{\frac{q+ri}{r}}v \|+\|\Delta^iv\|\ge \varepsilon \|X\Delta^i v\|$ by applying $\eqref{ellip}$  to $\Delta^iv \in \Hi^{\infty}$. If $i=0$, the inequality holds. Otherwise $\frac{q+ri}{r} \ge i$ and the embedding $\Hi^{\frac{q+ri}{r}} \subset \Hi^{i}$ is continuous, so  there exists $K>0$ such that for every $v \in \Hi^{\infty}(\Delta)$ we have $\|\Delta^iv\|\le K\|\Delta^{\frac{q+ri}{r}}v\|$. Then for every $v \in \Hi^{\infty}(\Delta)$ we obtain
     $\|\Delta^{\frac{q+ri}{r}}v \|\ge \frac{\varepsilon}{1+K} \|X\Delta^i v\|$, hence $\|\Delta^{\frac{q+ri}{r}}v \|+\|v\|\ge \frac{\varepsilon}{1+K} \|X\Delta^i v\|$.
\end{enumerate}

Hence  $\Delta$ is a generalized Laplace operator of order $r$.
\end{proof}

 If $\Delta$ is an element of ${\cal D}$ then the two algebras ${\cal D}$ and ${\cal D}[\Delta]$ are the same as well as the two filtrations.

\subsection{Cauchy's formula}

 For every $z\in \mathbb{C}$, we define by spectral calculus the operator $\Delta^z$ on $\Hi$ with domain containing $\Hi^{\infty}$. Recall that $\Gamma$ is a downwards pointing vertical line in $\mathbb{C}$ separating 0 from the spectrum of $\Delta$. For every $ z \in \mathbb{C}$ such that ${\rm Re} (z)<0$, we have  Cauchy's formula
\begin{equation*} \label{Cauchyformula}\Delta^{z}=\frac{1}{2i\pi}\int_{\Gamma} \lambda^z(\lambda-\Delta)^{-1} d\lambda. \end{equation*}

 This  generalized Riemann integral is convergent for the $Op^0$ operators topology (i.e it is convergent for the norm operators from $\Hi^{s}$ to $\Hi^s$, for every $s\in \mathbb{R}$). For every  $p\in \mathbb{N}$ and every  $ z \in \mathbb{C}$ such that ${\rm Re} (z)<p$, we have the formula

\begin{equation*} \binom {z}{p} \Delta^{z-p}=\frac{1}{2\pi i}\int_{\Gamma} \lambda^{z}(\lambda-\Delta)^{-p-1}d\lambda.\end{equation*}

 Here $\displaystyle{\binom {z}{p}}$ is the usual binomial coefficient.  The integral is convergent for the  $Op^p$ operators topology (namely it is convergent for the bounded norm operators from $\Hi^{s+p}$ to $\Hi^s$, for every $s\in \mathbb{R}$).

$\label{traceholo}$

  Let $a\in \mathbb{R}$, denote
\begin{equation*} \mathbb{C}_{>a}=\{ z\in \mathbb{C}|~{\rm Re}(z)>a\} ~{\text and}~\mathbb{C}_{<a}=\{ z\in \mathbb{C}|~{\rm Re}(z)<a\}. \end{equation*}

\begin{pr}$\label{Deltaholo}$
The function $z\mapsto \Delta^{-z}$ defined from the complex open half-plane $\mathbb{C}_{>0}$ to ${\cal B}(\Hi)$ is holomorphic.
\end{pr}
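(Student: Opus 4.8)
The plan is to show that $z \mapsto \Delta^{-z}$ is complex-differentiable at every point $z_0$ of the open half-plane $\mathbb{C}_{>0}$, with values in $\mathcal{B}(\mathcal{H})$. First I would fix $z_0$ with $\operatorname{Re}(z_0) > 0$ and choose $\delta > 0$ small enough that the closed disc $\overline{D}(z_0,\delta)$ still lies in $\mathbb{C}_{>0}$; write $a := \inf_{|z-z_0|\le\delta}\operatorname{Re}(z) > 0$. Using the spectral representation, for every $z$ in this disc we have $\Delta^{-z} = \int \lambda^{-z}\, dE(\lambda)$, where $E$ is the spectral measure of $\Delta$, and since the spectrum of $\Delta$ is contained in $[\lambda_0,+\infty)$ with $\lambda_0 > 0$, the integrand $\lambda^{-z}$ is a bounded Borel function of $\lambda$ for each such $z$, with $\|\Delta^{-z}\|_{\mathcal{B}(\mathcal{H})} = \sup_{\lambda \in \operatorname{Sp}(\Delta)} |\lambda^{-z}| \le \max(\lambda_0^{-a}, 1)$ uniformly on the disc. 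So $\Delta^{-z}$ is genuinely a bounded operator, which already justifies the codomain $\mathcal{B}(\mathcal{H})$.

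Next I would compute the candidate derivative. Formally $\frac{d}{dz}\lambda^{-z} = -(\log\lambda)\lambda^{-z}$, so the natural candidate for the derivative of $\Delta^{-z}$ is $-(\log\Delta)\Delta^{-z} = \int -(\log\lambda)\lambda^{-z}\,dE(\lambda)$. One checks this is again a bounded operator on the disc: for $\lambda \ge \lambda_0 > 0$ the function $\lambda \mapsto |\log\lambda|\,\lambda^{-a}$ is bounded on $[\lambda_0,+\infty)$ (it tends to $0$ at infinity and is continuous), hence $\sup_{\lambda}|\log\lambda|\,|\lambda^{-z}|$ is finite and bounded uniformly for $z$ in the disc. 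Then for $h \in \mathbb{C}$ with $0 < |h| \le \delta$,
\begin{equation*}
\frac{\Delta^{-(z_0+h)} - \Delta^{-z_0}}{h} - \bigl(-(\log\Delta)\Delta^{-z_0}\bigr) = \int_{\operatorname{Sp}(\Delta)} g_h(\lambda)\, dE(\lambda),
\end{equation*}
where $g_h(\lambda) = \frac{\lambda^{-(z_0+h)} - \lambda^{-z_0}}{h} + (\log\lambda)\lambda^{-z_0}$, and the operator norm of the left-hand side equals $\sup_{\lambda \in \operatorname{Sp}(\Delta)}|g_h(\lambda)|$.

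The main point is therefore to bound $\sup_{\lambda \ge \lambda_0}|g_h(\lambda)|$ and show it tends to $0$ as $h \to 0$. Writing $\lambda^{-z} = e^{-z\log\lambda}$, Taylor's formula with integral remainder for the entire function $w \mapsto e^{-w\log\lambda}$ gives $g_h(\lambda) = h \int_0^1 (1-t)(\log\lambda)^2 \lambda^{-(z_0 + th)}\, dt$, so $|g_h(\lambda)| \le |h|\,(\log\lambda)^2 \sup_{0\le t\le 1}\lambda^{-\operatorname{Re}(z_0)-t\operatorname{Re}(h)} \le |h|\,(\log\lambda)^2 \lambda^{-a}$ for $|h|\le\delta$. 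Since $C := \sup_{\lambda\ge\lambda_0}(\log\lambda)^2\lambda^{-a} < +\infty$ (again because this function is continuous on $[\lambda_0,\infty)$ and decays at infinity), we get $\sup_\lambda |g_h(\lambda)| \le C|h| \to 0$. This proves $\Delta^{-z}$ is differentiable at $z_0$ with derivative $-(\log\Delta)\Delta^{-z_0} \in \mathcal{B}(\mathcal{H})$, and as $z_0 \in \mathbb{C}_{>0}$ was arbitrary, holomorphy follows. The only mild obstacle is the bookkeeping to see that all the relevant functions of $\lambda$ (namely $|\lambda^{-z}|$, $|\log\lambda|\,|\lambda^{-z}|$, $(\log\lambda)^2\lambda^{-a}$) are bounded on $\operatorname{Sp}(\Delta)\subseteq[\lambda_0,+\infty)$ uniformly for $z$ in a small disc, which is exactly where positivity and invertibility of $\Delta$ (so $\lambda_0>0$) are used; alternatively one can invoke the Cauchy formula $\Delta^{-z} = \frac{1}{2i\pi}\int_\Gamma \lambda^{-z}(\lambda-\Delta)^{-1}\,d\lambda$ recalled above and differentiate under the integral sign, the justification being the same type of uniform estimate along $\Gamma$.
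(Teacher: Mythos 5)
Your proof is correct, but it takes a genuinely different route from the paper. The paper fixes $z_0$, chooses the vertical contour $\Gamma=\{c+iy\,|\,y\in\mathbb{R}\}$ with $0<c<\min(\lambda_0,{\rm Re}(z_0))$, represents $\Delta^{-z}$ by the Cauchy formula $\frac{1}{2i\pi}\int_\Gamma\lambda^{-z}(\Delta-\lambda)^{-1}d\lambda$ and differentiates under the integral sign, the justification being the normal convergence on ${\rm Re}(z)>c$ of the integral with integrand $-\log(\lambda)\lambda^{-z}(\Delta-\lambda)^{-1}$; this is the same contour machinery the paper reuses later (e.g.\ in Lemma \ref{lem:Taylor}), and you only mention it as an afterthought. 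You instead argue directly through the spectral measure of $\Delta$: you identify the candidate derivative $-\log(\Delta)\Delta^{-z_0}$, check it is bounded because ${\rm Sp}(\Delta)\subseteq[\lambda_0,+\infty)$ with $\lambda_0>0$, and control the operator norm of the difference quotient minus the candidate by the supremum over the spectrum of the scalar remainder $g_h(\lambda)$, which you bound by Taylor's formula with integral remainder. This is more elementary and quantitative (no contour integrals with operator-valued integrands to justify), while the paper's version has the advantage of fitting the Cauchy-formula toolkit used throughout Section 1. One small repair in your bookkeeping: the inequalities $\sup_{0\le t\le 1}\lambda^{-{\rm Re}(z_0)-t{\rm Re}(h)}\le\lambda^{-a}$ and $\|\Delta^{-z}\|\le\max(\lambda_0^{-a},1)$ are only valid for $\lambda\ge 1$, since for $\lambda_0\le\lambda<1$ the map $s\mapsto\lambda^{-s}$ is increasing in $s$; in that region you should use the exponent $A={\rm Re}(z_0)+\delta$ instead of $a$ (or simply note that $[\lambda_0,1]$ is compact, so continuity gives a uniform bound there), after which your uniform estimate $\sup_\lambda|g_h(\lambda)|\le C|h|$ and the conclusion stand unchanged.
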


\begin{proof}
  Let $z_0\in \mathbb{C}_{>0}$. We consider $\Gamma:=\{c+iy | y \in \mathbb{R}\}$ with fixed $c \in ]0,\min(\lambda_0,{\rm Re}(z_0))[$.

  From Cauchy's formula \eqref{Cauchyformula}, we have for every $z \in \mathbb{C}_{>c}$,  $\Delta^{-z}=\frac{1}{2i\pi}\int_{\Gamma}\lambda^{-z}(\Delta-\lambda)^{-1}d\lambda$. The function $z\mapsto \lambda^{-z}$ is differentiable on $\mathbb{C}_{>c}$ with $(\lambda^{-z})'=-\log(\lambda)\lambda^{-z}$. The Bertrand   integral $\frac{1}{2i\pi}\int_{\Gamma}-\log(\lambda)\lambda^{-z}(\Delta-\lambda)^{-1}d\lambda$ is normally convergent on $\mathbb{C}_{>c}$. We can differentiate under the integral symbol, so the application  $z\mapsto \Delta^{-z}$ is differentiable on $\mathbb{C}_{>c}$, in particular at $z_0$.
\end{proof}

\begin{pr}$\label{DeltaXholo}$ Let $X \in {\rm Op}$. If for some real $a$,  $X\Delta^{-a}$ is a trace class operator then the function $z\mapsto {\rm Trace}(X\Delta^{-z})$ is holomorphic on $\mathbb{C}_{>a}$.
\end{pr}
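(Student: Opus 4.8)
The plan is to combine Proposition~\ref{Deltaholo} with the continuity of the trace functional on the ideal of trace class operators. First I would fix $z_0 \in \mathbb{C}_{>a}$ and choose a real number $b$ with $a < b < \mathrm{Re}(z_0)$, together with a contour $\Gamma := \{c + iy \mid y \in \mathbb{R}\}$ where $c \in \,]0,\min(\lambda_0, b)[\,$. The idea is to write, for $z$ near $z_0$,
\begin{equation*}
X\Delta^{-z} = (X\Delta^{-a})\,\Delta^{a-z},
\end{equation*}
where $X\Delta^{-a}$ is trace class by hypothesis and $\Delta^{a-z} \in \mathcal{B}(\Hi)$ since $\mathrm{Re}(a-z) < 0$ on the relevant region. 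Because $\mathcal{L}^1(\Hi)$ is a two-sided ideal of $\mathcal{B}(\Hi)$, the product $X\Delta^{-z}$ is trace class for every $z \in \mathbb{C}_{>a}$, so the function $z \mapsto \mathrm{Trace}(X\Delta^{-z})$ is at least well defined there.

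Next I would establish holomorphy. The map $z \mapsto \Delta^{a-z}$ is holomorphic from $\mathbb{C}_{>a}$ (equivalently, $w \mapsto \Delta^{-w}$ is holomorphic on $\mathbb{C}_{>0}$ by Proposition~\ref{Deltaholo}) into $\mathcal{B}(\Hi)$, with derivative obtained by differentiating under the Cauchy integral. Left-multiplication by the fixed trace class operator $X\Delta^{-a}$ is a bounded linear map from $\mathcal{B}(\Hi)$ into $\mathcal{L}^1(\Hi)$ for the respective norms (indeed $\|T S\|_1 \le \|T\|_1 \|S\|$), hence it preserves holomorphy: the composite $z \mapsto (X\Delta^{-a})\Delta^{a-z}$ is holomorphic from $\mathbb{C}_{>a}$ into $\mathcal{L}^1(\Hi)$ equipped with the trace norm. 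Finally, the trace $\mathrm{Trace}\colon \mathcal{L}^1(\Hi) \to \mathbb{C}$ is a bounded (norm-continuous), in particular $\mathbb{C}$-linear, functional; composing a holomorphic Banach-space-valued function with a bounded linear functional yields a scalar holomorphic function. Therefore $z \mapsto \mathrm{Trace}(X\Delta^{-z})$ is holomorphic on $\mathbb{C}_{>a}$.

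The main point requiring a little care is the interchange of the trace with the Cauchy-integral representation of the derivative, i.e. justifying differentiation under the integral sign at the level of trace class operators rather than merely bounded operators. This is handled exactly as in the proof of Proposition~\ref{Deltaholo}: writing $\Delta^{a-z} = \frac{1}{2i\pi}\int_\Gamma \lambda^{a-z}(\Delta-\lambda)^{-1}\,d\lambda$ with convergence in the $\mathrm{Op}^0$ topology, one multiplies on the left by the fixed trace class operator $X\Delta^{-a}$; the resulting integrand $\lambda \mapsto -\log(\lambda)\,\lambda^{a-z}(X\Delta^{-a})(\Delta-\lambda)^{-1}$ is normally convergent in trace norm on a neighborhood of $z_0$ (the Bertrand integral $\int_\Gamma |\log\lambda|\,|\lambda|^{a-\mathrm{Re}(z)}\,\frac{|d\lambda|}{\mathrm{dist}(\lambda,\mathrm{spec}\,\Delta)}$ converges, and $\|(X\Delta^{-a})(\Delta-\lambda)^{-1}\|_1 \le \|X\Delta^{-a}\|_1\,\|(\Delta-\lambda)^{-1}\|$), so one may differentiate under the integral and then apply the continuous functional $\mathrm{Trace}$. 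Alternatively, and more cleanly, one simply invokes the abstract fact quoted above — holomorphy of a $\mathcal{B}(\Hi)$-valued map is transported through left multiplication by a fixed trace class operator and then through the bounded functional $\mathrm{Trace}$ — which makes the proof a one-line reduction to Proposition~\ref{Deltaholo}.
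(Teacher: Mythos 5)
Your proof is correct, but it follows a genuinely different route from the paper. The paper argues directly on the difference quotient: it expands $\Delta^{-z-h}=\Delta^{-z}-h\ln(\Delta)\Delta^{-z}+\Delta^{-z}\,o(h)$ by spectral calculus, takes traces, and controls the remainder with the bound $|\mathrm{Trace}(TS)|\le \|T\|_{1}\|S\|$, which in passing produces the explicit derivative $\zeta'(z)=\mathrm{Trace}\bigl(X\ln(\Delta)\Delta^{-z}\bigr)$. You instead factor $X\Delta^{-z}=(X\Delta^{-a})\,\Delta^{a-z}$, invoke Proposition~\ref{Deltaholo} for the $\mathcal{B}(\Hi)$-valued holomorphy of $z\mapsto\Delta^{a-z}$, and transport holomorphy through two bounded linear maps: left multiplication by the fixed trace-class operator $X\Delta^{-a}$ (using $\|TS\|_{1}\le\|T\|_{1}\|S\|$) and then the trace functional on $\mathcal{L}^{1}(\Hi)$. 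Your reduction is cleaner in two respects: it makes explicit that $X\Delta^{-z}$ is trace class for every $z\in\mathbb{C}_{>a}$ (a point the paper leaves implicit), and it avoids manipulating the $o(h)$ remainder, which in the paper involves the unbounded factor $\ln(\Delta)$ and is stated with a slightly garbled trace inequality. What you lose is the closed formula for $\zeta'(z)$, which the paper records at the end of its proof; if you want it, it follows from your framework by differentiating the $\mathcal{L}^{1}$-valued map $z\mapsto(X\Delta^{-a})\Delta^{a-z}$, whose derivative is $-(X\Delta^{-a})\ln(\Delta)\Delta^{a-z}$, and applying the trace. Your final paragraph on differentiating under the Cauchy integral in trace norm is a harmless alternative justification, but, as you note, the abstract argument already suffices.
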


\begin{proof} Let $\zeta:z \mapsto {\rm Trace}(X\Delta^{-z})$. For ${\rm Re}(z)>a$, from spectral calculus we obtain the Taylor expansion
\begin{equation*} \Delta^{-z-h}=\Delta^{-z}-h\ln(\Delta)\Delta^{-z}+\Delta^{-z}\underset{h\rightarrow 0}o(h), \end{equation*}
 where $\underset{h\rightarrow 0}o(h)$ represents bounded operators on $\Hi$ such that $\underset{h\rightarrow 0} \lim \|h\|=0$.

\begin{equation*}
\begin{split}
{\rm Trace}(X\Delta^{-z-h})-{\rm Trace} (X\Delta^{-z})&={\rm Trace}(X\Delta^{-z-h}-X\Delta^{-z})\\
                                                           &=h{\rm Trace}(X\ln(\Delta)\Delta^{-z})+{\rm Trace}(X\Delta^{-z}\underset{h\rightarrow 0}o(h))\\
                                                           &=h{\rm Trace}(X\ln(\Delta)\Delta^{-z})+\underset{h\rightarrow 0}o(h).
\end{split}
\end{equation*}
 The last $\underset{h\rightarrow 0}o(h)$ is a consequence of the following general result:
let $T\in {\cal L}^1(\Hi)$ and $S\in {\cal B}(\Hi)$,  the inequality
$|{\rm Trace}(TS)|\le |{\rm Trace}(T)|\|S\|$ holds.  Then $\left | {\rm Trace}(X\Delta^{-z})\underset{h\rightarrow 0}o(h) \right |\le \left | {\rm Trace}(X\Delta^{-z})\right |\|\underset{h\rightarrow 0}o(h)\|$.

  Hence the application $\zeta:z \mapsto {\rm Trace}(X\Delta^{-z})$ is holomorphic on $\mathbb{C}_{>a}$, and we shall precise that for every $z\in \mathbb{C}_{>a}$
\begin{equation*} \zeta'(z)={\rm Trace}(X\ln(\Delta)\Delta^{-z}). \end{equation*}
\end{proof}

\begin{pr}$\label{holozeta1}$ If the operator $\Delta^{-\frac{1}{r}}$ is an element of ${\cal L}^p(\Hi)$ for some $p\ge 1$ and if $X\in Op^{-t}$ for some real $t$, then the spectral zeta function $z\mapsto {\rm Trace}(X\Delta^{-z})$ is holomorphic on the half-plane
$\mathbb{C}_{>\frac{p-t}{r}}$.
\end{pr}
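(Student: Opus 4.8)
The plan is to combine the Schatten-ideal hypothesis on $\Delta^{-1/r}$ with the analytic-order bookkeeping already developed, so as to exhibit a real half-plane on which $X\Delta^{-z}$ is trace class, and then invoke Proposition~\ref{DeltaXholo}. First I would observe that since $X\in {\rm Op}^{-t}$, Corollary~\ref{cor:Op} (or rather Lemma~\ref{cor:Op}) gives a bounded operator $B\in {\rm Op}^0$ with $X=\Delta^{-t/r}B$, and hence for any real $a$ one has $X\Delta^{-a}=\Delta^{-t/r}B\Delta^{-a}=\Delta^{-t/r}\Delta^{-a}B'$ where $B'=\Delta^{a}B\Delta^{-a}\in{\rm Op}^0$ is still bounded on $\Hi^0$; equivalently, up to a bounded factor, $X\Delta^{-a}$ behaves like an operator of analytic order $-t-ra$.

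The second step is to choose $a$ so that this analytic order lands in the Schatten range covered by Corollary~\ref{prop:OpL}. That corollary says: if $\Delta^{-1/r}\in{\cal L}^p(\Hi)$ with $p\ge1$, then ${\rm Op}^{-u}\subset{\cal L}^{p/u}(\Hi)$ for every $0<u\le p$, and in particular ${\rm Op}^{-u}$ consists of trace class operators as soon as $u\ge p$. So I would write $X\Delta^{-a}\in{\rm Op}^{-t-ra}$ and ask for $-t-ra\le -p$, i.e. $a\ge \frac{p-t}{r}$. For any such $a$ — more precisely for ${\rm Re}(z)=a>\frac{p-t}{r}$ one can even take $a$ slightly larger so that $t+ra>p$ strictly — the operator $X\Delta^{-a}$ is trace class. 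Here one must be a little careful that Corollary~\ref{prop:OpL} as literally stated requires $0<u\le p$; if $t+ra>p$ one simply factors $X\Delta^{-a}=(X\Delta^{-a+b})\Delta^{-b}$ with $X\Delta^{-a+b}\in{\rm Op}^{-p}\subset{\cal L}^1$ and $\Delta^{-b}$ bounded for a suitable $b>0$, using that ${\cal L}^1$ is a two-sided ideal in ${\cal B}(\Hi)$.

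The final step is purely a matter of quoting the earlier result: fix any real $a_0$ with $\frac{p-t}{r}<a_0$; by the above, $X\Delta^{-a_0}$ is trace class, so Proposition~\ref{DeltaXholo} applies with this value of $a$ and yields that $z\mapsto {\rm Trace}(X\Delta^{-z})$ is holomorphic on $\mathbb{C}_{>a_0}$. Since $a_0$ may be taken arbitrarily close to $\frac{p-t}{r}$ from above, the function is holomorphic on $\bigcup_{a_0>\frac{p-t}{r}}\mathbb{C}_{>a_0}=\mathbb{C}_{>\frac{p-t}{r}}$, which is the claim. The only mildly delicate point — and the one I would expect to need the most care — is the boundedness of the conjugated operator $\Delta^{a}B\Delta^{-a}$ on the relevant Sobolev scale, i.e. making precise that composing with powers of $\Delta$ only shifts analytic order and does not destroy membership in ${\rm Op}$; but this is exactly the content of Lemma~\ref{cor:Op} together with the fact that $(\,{\rm Op}^t)_{t\in\mathbb{R}}$ is a filtration, so no genuinely new estimate is required.
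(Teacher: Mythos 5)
Your proposal is correct and follows essentially the same route as the paper: observe that $X\Delta^{-z}\in {\rm Op}^{-t-r{\rm Re}(z)}$, feed this into Corollary~\ref{prop:OpL} to get a trace class operator, and conclude with Proposition~\ref{DeltaXholo}. The only (cosmetic) difference is that the paper applies the corollary exactly at ${\rm Re}(z)=\frac{p-t}{r}$, where $u=t+r{\rm Re}(z)=p$ gives ${\cal L}^1(\Hi)$ directly, so neither your conjugation by $\Delta^{a}$ nor the union over half-planes $\mathbb{C}_{>a_0}$ is needed.
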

\begin{proof} Let $z \in \mathbb{C}$. The operator $X\Delta^{-z}$ is in  $Op^{-t-r{\rm Re}(z)}$, and for $0<t+r{\rm Re}(z)\le p$ we have $Op^{-t-r{\rm Re}(z)}(\Delta)\subset {\cal L}^{\frac{p}{t+r{\rm Re}(z)}}(\Hi)$. In particular if ${\rm Re}(z)=\frac{p-t}{r}$ we obtain $X\Delta^{-z} \in {\cal L}^1(\Hi)$. We conclude with Proposition \ref{DeltaXholo}.
\end{proof}
\noindent The next result will be used in the proof of Theorem  $\ref{thetheorem}$.
\begin{pr}\label{comutnulle}
Let $(X,Y,T)\in (Op)^3$, then $z \mapsto{\rm Trace}([X,Y\Delta^{-z}T])$ vanishes if ${\rm Re}(z)\gg 0$. \end{pr}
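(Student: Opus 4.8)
The plan is to reduce the statement to the fact that the trace of a commutator of trace-class-compatible operators vanishes, once the real part of $z$ is large enough to make everything trace class. First I would fix $t_X, t_Y, t_T \in \mathbb{R}$ with $X \in \mathrm{Op}^{t_X}$, $Y \in \mathrm{Op}^{t_Y}$, $T \in \mathrm{Op}^{t_T}$, which is possible since each operator lies in some $\mathrm{Op}^t$ by definition of $\mathrm{Op}$. For $z \in \mathbb{C}$ the operator $\Delta^{-z}$ lies in $\mathrm{Op}^{-r\,\mathrm{Re}(z)}$ by the Proposition on analytic order of complex powers, so $Y\Delta^{-z}T \in \mathrm{Op}^{t_Y + t_T - r\,\mathrm{Re}(z)}$ and the full commutator $[X, Y\Delta^{-z}T]$ lies in $\mathrm{Op}^{t_X + t_Y + t_T - r\,\mathrm{Re}(z)}$. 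More importantly, I would expand $[X, Y\Delta^{-z}T] = X(Y\Delta^{-z}T) - (Y\Delta^{-z}T)X$, and note that \emph{each} of the two products $X Y \Delta^{-z} T$ and $Y \Delta^{-z} T X$ individually lies in $\mathrm{Op}^{t_X + t_Y + t_T - r\,\mathrm{Re}(z)}$.

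Next I would invoke Corollary \ref{prop:OpL}: assuming (as is standing in this section once $\Delta$ is placed in a Schatten ideal; alternatively this is the hypothesis under which the statement is of interest, since otherwise no trace is defined) that $\Delta^{-1/r} \in \mathcal{L}^p(\Hi)$ for some $p \ge 1$, we get $\mathrm{Op}^{-u} \subset \mathcal{L}^{p/u}(\Hi)$ for $0 < u \le p$, hence $\mathrm{Op}^{-u} \subset \mathcal{L}^1(\Hi)$ as soon as $u \ge p$. Therefore, choosing $\mathrm{Re}(z)$ large enough that $r\,\mathrm{Re}(z) - (t_X + t_Y + t_T) \ge p$, both $X Y \Delta^{-z} T$ and $Y \Delta^{-z} T X$ are trace class operators on $\Hi$. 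Here I would be a little careful: the operators are a priori defined on $\Hi^\infty$, but an element of $\mathrm{Op}^{-u}$ with $u > 0$ extends continuously to a bounded operator on $\Hi^0 = \Hi$, and the Schatten membership is precisely membership of that extension in $\mathcal{L}^{p/u}(\Hi) \subseteq \mathcal{L}^1(\Hi)$; this is exactly the content used already in Propositions \ref{holozeta1} and \ref{DeltaXholo}.

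Finally, with $S := X Y \Delta^{-z} T$ and $R := Y \Delta^{-z} T X$ both in $\mathcal{L}^1(\Hi)$, I would write $S$ and $R$ as the two cyclic arrangements of the same bounded operators: $S = A B$ and $R = B A$ where $A := X$ is bounded on $\Hi$ (it lies in $\mathrm{Op}^{t_X} \subseteq \mathcal{B}(\Hi)$ after composition with a sufficiently negative power, but more cleanly one simply uses that the product of a bounded operator with a trace-class operator is trace class and $\mathrm{Trace}(AB) = \mathrm{Trace}(BA)$ whenever one factor is bounded and the other trace class, or both are Hilbert--Schmidt). Concretely: $X \cdot (Y\Delta^{-z}T)$ and $(Y\Delta^{-z}T) \cdot X$; split $\Delta^{-z} = \Delta^{-z/2}\Delta^{-z/2}$ and regroup so that each factor in the cyclic pair is Hilbert--Schmidt, or just apply the bounded-times-trace-class cyclicity directly, to conclude $\mathrm{Trace}(S) = \mathrm{Trace}(R)$, i.e.\ $\mathrm{Trace}([X, Y\Delta^{-z}T]) = 0$ for all $z$ with $\mathrm{Re}(z)$ sufficiently large. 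I expect the only genuinely delicate point to be bookkeeping the analytic orders so that the cyclicity-of-trace hypothesis (one factor trace class, the others bounded) is legitimately satisfied; the rest is a direct application of Corollary \ref{prop:OpL} and the elementary trace identities already quoted in the proof of Proposition \ref{DeltaXholo}.
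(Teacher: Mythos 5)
Your argument is correct, but it takes a different route from the paper, and one of your two suggested shortcuts does not work as stated. The paper never shows that $XY\Delta^{-z}T$ and $Y\Delta^{-z}TX$ are separately trace class and then compares their traces; instead it rewrites the commutator algebraically as
\[
[X,Y\Delta^{-z}T]=[XY\Delta^{-N},\Delta^{-z+N}T]+[\Delta^{-z+N}TX,Y\Delta^{-N}],
\]
chooses $N$ so large that $XY\Delta^{-N}$ and $Y\Delta^{-N}$ are bounded on $\mathcal{H}$, and then takes ${\rm Re}(z)$ large so that $\Delta^{-z+N}T$ and $\Delta^{-z+N}TX$ are trace class; each bracket is then a commutator of a bounded operator with a trace-class operator, so its trace vanishes by the standard cyclicity ${\rm Trace}(AB)={\rm Trace}(BA)$ for $A$ bounded, $B$ trace class. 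Your route --- show each of the two products is trace class via Corollary \ref{prop:OpL}, then equate their traces --- is fine \emph{provided} you use the Hilbert--Schmidt regrouping you mention (write $XY\Delta^{-z}T=(XY\Delta^{-z/2})(\Delta^{-z/2}T)$ and $Y\Delta^{-z}TX=(Y\Delta^{-z/2})(\Delta^{-z/2}TX)$, note all four factors lie in ${\cal L}^2(\mathcal{H})$ for ${\rm Re}(z)\gg0$, and observe both traces equal ${\rm Trace}(\Delta^{-z/2}TXY\Delta^{-z/2})$). By contrast, your fallback of ``just apply the bounded-times-trace-class cyclicity directly'' with $A:=X$ is not legitimate: $X\in{\rm Op}^{t_X}$ with $t_X>0$ need not extend to a bounded operator on $\mathcal{H}^0$, and ``composing with a sufficiently negative power'' does not make $X$ itself bounded --- circumventing exactly this point is the purpose of the paper's regrouping inside the commutators (and of your $\Delta^{-z/2}$ splitting). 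One further remark in your favour: both proofs tacitly require $\Delta^{-1/r}\in{\cal L}^p(\mathcal{H})$ for some $p\ge1$ (compact resolvent alone does not give trace-class powers); the paper leaves this implicit, as the proposition is only invoked in Theorem \ref{thetheorem} where that hypothesis is in force, whereas you state it explicitly, which is the more careful formulation.
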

\begin{proof} For every $N \in \mathbb{N}$ and $z\in \mathbb{C}$ we write:

 $$[X,Y\Delta^{-z}T]=[XY\Delta^{-N},\Delta^{-z+N}T]+[\Delta^{-z+N}TX,Y\Delta^{-N}].$$

   For a choice of large enough integer $N$, the operators $XY\Delta^{-N}$ and $Y\Delta^{-N}$ are bounded. With this choice of $N$, for complex numbers with  large enough real part, the operators $\Delta^{-z+N}T$ and  $\Delta^{-z+N}TX$  are trace class. The conclusion follows.
\end{proof}
By analytic continuation, we shall say that the function $z \mapsto{\rm Trace}([X,Y\Delta^{-z}T])$ is identically equal to zero on $\mathbb{C}$.
\begin{df}$\label{def:holofamily}$ Let $\left ({\cal D},\Delta,r\right)$ be an algebra of generalized differential operators. Let $k\in \mathbb{Z}$.
\begin{enumerate}[nolistsep]

\item The family $(T_z)_{z \in \mathbb{C}}$   is an elementary holomorphic family of type $k$  if there exists a polynomial function $p$, an integer $n\in \mathbb{N}$ and an operator $X \in {\cal D}^{rn+k}$  such that for every $z\in \mathbb{C}$, $T_z=p(z)X\Delta^{z-n}$.

\item The family  $(T_z)_{z \in \mathbb{C}}$    is a  holomorphic family of type $k$ if for any real number $a$ and $m$,  there exists an integer $q$ and $q$ elementary holomorphic families of type $k$ denoted $(T_z^j)_{z \in \mathbb{C}}$ for $1 \le j \le q$, such that on the half-plane  ${\rm Re}(z)<a$ we have the following  decomposition called Taylor expansion of order $m$ and of frontier $a$:

\begin{equation*} T_z=T_z^1+\dots+T_z^q+R_z, \end{equation*}

 where the remainder  $z\mapsto R_z$  is holomorphic from the half-plane ${\rm Re}(z)<a$ to $Op^m$.
\end{enumerate}
\end{df}

If the family $(T_z)_{z \in \mathbb{C}}$   is a   holomorphic family of type $k$ then $T_z \in Op^{k+r{\rm Re}(z)}$ for every $z\in \mathbb{C}$. A  holomorphic family of type $k$ is also a  holomorphic family of type $k+1$. Sometimes we shall write $ (T_z)_{z \in \mathbb{C}}\in{\rm Hol}(k)$ instead of $(T_z)_{z \in \mathbb{C}}$ is a  holomorphic family of type $k$.  Contrarily to elementary holomorphic families of type $k$, the  holomorphic families of type $k$ are stable under product and commutators with an operator of ${\cal D}$, as we shall see later.

\begin{pr} \label{holozeta2}
Let $\left ({\cal D},\Delta,r\right)$ be an algebra of  generalized differential operators. Let $k\in\mathbb{Z}$ and $(T_z)_{z \in \mathbb{C}}$   be a holomorphic family of type $k$. If $\Delta^{-\frac{1}{r}}\in {\cal L}^p(\Hi)$ for some $p\ge 1$,  then the  function $z\mapsto {\rm Trace}(T_z)$ is holomorphic on the open half-plane $\mathbb{C}_{<\frac{-p-k}{r}}$.
\end{pr}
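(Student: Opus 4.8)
The plan is to adapt the Taylor expansion of the family $(T_z)$ to the target half‑plane and then handle the finitely many elementary pieces and the smoothing remainder separately: the first via the already‑established holomorphy results for zeta functions of a single operator, the second via the Schatten estimate on ${\rm Op}^{-p}$.

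First I would fix the frontier $a:=\frac{-p-k}{r}$ and the order $m:=-p$, and invoke Definition $\ref{def:holofamily}$(2): there are an integer $q$, elementary holomorphic families $(T_z^1)_{z\in\mathbb{C}},\dots,(T_z^q)_{z\in\mathbb{C}}$ of type $k$, and a map $z\mapsto R_z$ holomorphic from $\mathbb{C}_{<a}$ to ${\rm Op}^{-p}$, with $T_z=T_z^1+\dots+T_z^q+R_z$ for all $z$ with ${\rm Re}(z)<a$. Since $\mathbb{C}_{<a}$ is exactly the half‑plane in the statement, it is enough to prove that $z\mapsto{\rm Trace}(T_z^j)$ and $z\mapsto{\rm Trace}(R_z)$ are holomorphic on $\mathbb{C}_{<a}$.

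For a fixed elementary piece, write $T_z^j=p_j(z)\,X_j\Delta^{z-n_j}$ with $p_j$ a polynomial, $n_j\in\mathbb{N}$ and $X_j\in{\cal D}^{rn_j+k}\subseteq{\rm Op}^{rn_j+k}$. Applying Proposition $\ref{holozeta1}$ to $X_j\in{\rm Op}^{-t_j}$ with $t_j:=-(rn_j+k)$ (any sign of $t_j$ is allowed there, and $p\ge 1$ holds), the function $w\mapsto{\rm Trace}(X_j\Delta^{-w})$ is holomorphic on $\mathbb{C}_{>\frac{p-t_j}{r}}=\mathbb{C}_{>n_j+\frac{p+k}{r}}$. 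Composing with the entire map $z\mapsto n_j-z$, which maps $\mathbb{C}_{<a}=\mathbb{C}_{<\frac{-p-k}{r}}$ into $\mathbb{C}_{>n_j+\frac{p+k}{r}}$, and then multiplying by the polynomial $p_j$, I obtain that $z\mapsto{\rm Trace}(T_z^j)={\rm Trace}\big(p_j(z)X_j\Delta^{-(n_j-z)}\big)$ is holomorphic on $\mathbb{C}_{<a}$.

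Finally, for the remainder: Corollary $\ref{prop:OpL}$ with $t=p$ gives ${\rm Op}^{-p}\subset{\cal L}^1(\Hi)$, so each $R_z$ is trace class; and by Lemma $\ref{cor:Op}$ any $R\in{\rm Op}^{-p}$ can be written $R=\Delta^{-p/r}(\Delta^{p/r}R)$ with $\Delta^{p/r}R\in{\rm Op}^0$, whence $\|R\|_1\le\|\Delta^{-p/r}\|_1\,\|\Delta^{p/r}R\|_{{\cal B}(\Hi)}$, and the norm on the right is controlled by the norm of $R$ as a map $\Hi^{0}\to\Hi^{p}$. Thus the inclusion ${\rm Op}^{-p}\hookrightarrow{\cal L}^1(\Hi)$ is continuous, so the holomorphic ${\rm Op}^{-p}$‑valued map $z\mapsto R_z$ is also holomorphic as an ${\cal L}^1(\Hi)$‑valued map, and composing with the continuous trace functional shows $z\mapsto{\rm Trace}(R_z)$ is holomorphic on $\mathbb{C}_{<a}$. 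Adding everything, $z\mapsto{\rm Trace}(T_z)=\sum_{j=1}^{q}{\rm Trace}(T_z^j)+{\rm Trace}(R_z)$ is holomorphic on $\mathbb{C}_{<\frac{-p-k}{r}}$. I expect the main obstacle to be precisely this last point — making sure the scalar trace is a continuous functional on ${\rm Op}^{-p}$ so that holomorphy in the operator topology of ${\rm Op}$ really does transfer to holomorphy of ${\rm Trace}(R_z)$; the rest reduces to careful bookkeeping with the half‑planes and a direct appeal to Proposition $\ref{holozeta1}$.
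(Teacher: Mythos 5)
Your proposal is correct and follows essentially the same route as the paper: take a Taylor expansion of order $-p$ and frontier $a=\frac{-p-k}{r}$, apply Proposition \ref{holozeta1} to each elementary family, and use ${\rm Op}^{-p}\subset{\cal L}^1(\Hi)$ for the remainder. Your extra care with the polynomial factors $p_j(z)$ and with the continuity of the embedding ${\rm Op}^{-p}\hookrightarrow{\cal L}^1(\Hi)$ (so that operator-topology holomorphy of $z\mapsto R_z$ passes to ${\rm Trace}(R_z)$) merely makes explicit details the paper leaves implicit.
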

\begin{proof} Let $T_z=T_z^1+\dots+T_z^q+R_z$ be a Taylor expansion  of order $-p$ and of frontier $a=\frac{-p-k}{r}$ of the family $(T_z)_{z \in \mathbb{C}}$. Every $(T_z^j)_{z \in \mathbb{C}}$ for $1 \le j \le q$ is  an elementary holomorphic family of type $k$, so there exists an integer $n_j$ and an operator  $X_j \in {\cal D}^{rn+k}$ such that $T_z^j=X_j\Delta^{z-n_j}$. By Proposition $\ref{holozeta1}$ the map $z\mapsto {\rm Trace}(X_j\Delta^{z-n_j})$ is holomorphic on $\mathbb{C}_{<a}$. By definition, the function defined on $\mathbb{C}_{<a}$ by $z\mapsto R_z$ takes values  in $Op^{-p} \subset {\cal L}^1(\Hi)$ and is holomorphic. As a consequence $z\mapsto {\rm Trace}(R_z)$ is holomorphic on the open half-plane $\mathbb{C}_{<a}$.
\end{proof}

\subsection{ Taylor's expansion }

The next lemma is due to  A.Connes and H.Moscovici \cite[Appendix B]{Connes1}. We shall refer to it as Taylor's expansion, as the formula agrees with the  Definition \ref{def:holofamily}.

\begin{lem}$\label{lem:Taylor}$ Let $\left ({\cal D},\Delta,r\right)$ be an algebra of generalized differential operators. Let $X\in {\cal D}^{q_X}$ and $z\in \mathbb{C}$. For every integer $N$ such that $N+1>{\rm Re}(z)$ the following identity holds:
\begin{equation*}
 [X,\Delta^{z}]=\displaystyle{-\sum_{k=1}^N} \binom{z}{k}X^{(k)}\Delta^{z-k}+R_N(z,X),\label{devTaylor}
\end{equation*}

where $R_N(z,X)=\displaystyle{-\frac{1}{2\pi i }\int_{\Gamma} \lambda^{z}(\lambda-\Delta)^{-1}X^{(N+1)}(\lambda-\Delta)^{-N-1}d\lambda}.$ For every $k\in \llbracket1,N\rrbracket$, we have $X^{(k)}\Delta^{z-k}\in Op^{r{\rm Re}(z)+q_X-k}$ and $R_N(z,X)\in Op^{r{\rm Re}(z)+q_X-N-1}$.

\end{lem}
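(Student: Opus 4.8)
The plan is to derive the Taylor expansion of $[X,\Delta^z]$ from the Cauchy integral formula for $\Delta^z$ together with the resolvent identity, following Connes--Moscovici. First I would start from the representation $\Delta^z = \frac{1}{2\pi i}\int_\Gamma \lambda^z(\lambda-\Delta)^{-1}\,d\lambda$, valid (for the $\mathrm{Op}^0$-topology and its higher analogues) when $\mathrm{Re}(z)$ is small, and compute the commutator inside the integral:
\begin{equation*}
[X,(\lambda-\Delta)^{-1}] = (\lambda-\Delta)^{-1}[X,\Delta](\lambda-\Delta)^{-1} = (\lambda-\Delta)^{-1}X^{(1)}(\lambda-\Delta)^{-1}.
\end{equation*}
Iterating this identity, i.e. repeatedly commuting $X$ (or rather the successive commutators $X^{(k)}$) past one more factor of $(\lambda-\Delta)^{-1}$, produces after $N$ steps a finite sum
\begin{equation*}
[X,(\lambda-\Delta)^{-1}] = \sum_{k=1}^{N} (\lambda-\Delta)^{-1}X^{(k)}(\lambda-\Delta)^{-k-1} + (\lambda-\Delta)^{-1}X^{(N+1)}(\lambda-\Delta)^{-N-1}\cdot(\text{correction}),
\end{equation*}
the exact bookkeeping being a straightforward induction: the remainder term keeps the shape $(\lambda-\Delta)^{-1}X^{(N+1)}(\lambda-\Delta)^{-N-1}$ while each completed step contributes a $(\lambda-\Delta)^{-1}X^{(k)}(\lambda-\Delta)^{-k-1}$ term.

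Next I would integrate term by term against $\frac{1}{2\pi i}\int_\Gamma \lambda^z\,(\cdot)\,d\lambda$. Since $X^{(k)}$ commutes with nothing in particular but is independent of $\lambda$, I can pull it out of the $\lambda$-integral only after moving it to the left past the first resolvent; more cleanly, I use that $(\lambda-\Delta)^{-1}$ commutes with $\Delta$ so that $(\lambda-\Delta)^{-1}X^{(k)}(\lambda-\Delta)^{-k-1}$ integrates, via the formula $\binom{z}{k}\Delta^{z-k} = \frac{1}{2\pi i}\int_\Gamma \lambda^z(\lambda-\Delta)^{-k-1}\,d\lambda$ recalled in Subsection 1.2, to $X^{(k)}\binom{z}{k}\Delta^{z-k}$ up to rearranging the single leftover resolvent factor — actually the natural way is to keep the $X^{(k)}$ sandwiched and observe that $(\lambda-\Delta)^{-1}(\lambda-\Delta)^{-k}=(\lambda-\Delta)^{-k-1}$, so the $k$-th term is $\frac{1}{2\pi i}\int_\Gamma \lambda^z (\lambda-\Delta)^{-1}X^{(k)}(\lambda-\Delta)^{-k}\,d\lambda$ and a further single commutation (or simply the fact that all remaining operators commute with each other on $\Hi^\infty$ since $X^{(k)}$ appears only once and everything else is a function of $\Delta$) turns it into $-\binom{z}{k}X^{(k)}\Delta^{z-k}$ after tracking the sign convention $[X,\Delta^z]=-[\Delta^z,X]$. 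This yields the stated identity with remainder $R_N(z,X) = -\frac{1}{2\pi i}\int_\Gamma \lambda^z(\lambda-\Delta)^{-1}X^{(N+1)}(\lambda-\Delta)^{-N-1}\,d\lambda$.

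Then I would establish the order estimates. For the main terms: $X^{(k)}\in{\cal D}^{q_X+k(r-1)}\subseteq \mathrm{Op}^{q_X+k(r-1)}$ and $\Delta^{z-k}\in\mathrm{Op}^{r\mathrm{Re}(z)-rk}$, so the product lies in $\mathrm{Op}^{r\mathrm{Re}(z)+q_X-k}$ as claimed. For the remainder: I would split $\lambda^z(\lambda-\Delta)^{-1}X^{(N+1)}(\lambda-\Delta)^{-N-1}$ by distributing powers of $(\lambda-\Delta)^{-1}$ and $\Delta$-factors, bounding $\|X^{(N+1)}\|$ from $\Hi^{s}$ to $\Hi^{s-(q_X+(N+1)(r-1))}$ uniformly and estimating $\|\lambda^z(\lambda-\Delta)^{-1}\|$, $\|(\lambda-\Delta)^{-N-1}\Delta^{\text{something}}\|$ along $\Gamma$; the key is the Bertrand-type convergence of $\int_\Gamma |\lambda|^{\mathrm{Re}(z)}|\lambda|^{-1}|\lambda|^{-(N+1)}\,|d\lambda|$ in the appropriate operator norm, which converges precisely because $N+1>\mathrm{Re}(z)$. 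Counting exponents gives $R_N(z,X)\in\mathrm{Op}^{r\mathrm{Re}(z)+q_X-N-1}$.

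The main obstacle I anticipate is the bookkeeping of the iterated resolvent identity together with the order-counting for the remainder: one must be careful that at each of the $N$ commutation steps the residual term retains exactly the form $(\lambda-\Delta)^{-1}X^{(N+1)}(\lambda-\Delta)^{-N-1}$ (no stray intermediate factors), and that the operator-norm estimates on $\Gamma$ — which require interpolating between the various $\Hi^s$ and using that multiplication by $\lambda^{i\mathrm{Im}(z)}$ is unitary — are uniform enough to justify differentiating/integrating under the integral sign and to land in the claimed $\mathrm{Op}$-class. The convergence of the contour integral in the $\mathrm{Op}^{r\mathrm{Re}(z)+q_X-N-1}$ topology, rather than merely in a weaker sense, is the delicate point; everything else is routine induction and exponent arithmetic.
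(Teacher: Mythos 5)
Your overall route---Cauchy's formula for $\Delta^z$ combined with iterated commutation through the resolvent---is the same Connes--Moscovici argument the paper uses, but two of your steps are not correct as written. First, the algebraic bookkeeping: in your expansion the $k$-th term still has $X^{(k)}$ sandwiched between resolvents, and to convert $\frac{1}{2\pi i}\int_\Gamma\lambda^z(\lambda-\Delta)^{-1}X^{(k)}(\lambda-\Delta)^{-k}\,d\lambda$ into $\pm\binom{z}{k}X^{(k)}\Delta^{z-k}$ you invoke the claim that ``all remaining operators commute with each other''. This is false: $X^{(k)}$ is not a function of $\Delta$ and does not commute with the resolvent (if it did, the whole expansion would collapse after one step), and the alternative ``further single commutation'' is not an identity either---it produces an extra $X^{(k+1)}$ term that has to be reabsorbed. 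The paper handles exactly this by a telescoping sum: it writes $\binom{z}{k}X^{(k)}\Delta^{z-k}=\frac{1}{2\pi i}\int_\Gamma\lambda^zX^{(k)}(\lambda-\Delta)^{-k-1}d\lambda$, uses $X^{(k)}(\lambda-\Delta)^{-1}=(\lambda-\Delta)^{-1}X^{(k)}-(\lambda-\Delta)^{-1}X^{(k+1)}(\lambda-\Delta)^{-1}$ and sums over $k=0,\dots,N$ so the sandwiched terms cancel; equivalently, in your iteration each $X^{(k)}$ must be moved entirely to the left of all resolvents before integrating. This is repairable, but the justification you give is wrong. In addition, your derivation is only valid for ${\rm Re}(z)<0$, where the Cauchy formula for $\Delta^z$ converges; the identity on the whole half-plane ${\rm Re}(z)<N+1$ requires the analytic-continuation step (both sides are holomorphic on $\mathbb{C}_{<N+1}$ and agree on $\mathbb{C}_{<0}$), which you do not address.

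The more serious gap is the order of the remainder. Direct norm estimation of the contour integral, which is what your ``counting exponents'' amounts to, cannot yield $R_N(z,X)\in Op^{r{\rm Re}(z)+q_X-N-1}$. The integrand $\lambda^z(\lambda-\Delta)^{-1}X^{(N+1)}(\lambda-\Delta)^{-N-1}$ contains $N+2$ resolvent factors, each gaining at most $r$ Sobolev orders (and only at the cost of decay in $\lambda$), while $X^{(N+1)}$ costs $q_X+(N+1)(r-1)$; such estimates land around $Op^{q_X-N-1}$ (this is exactly what the paper claims for the convergence of the integral) and in no case below $Op^{q_X-N-1-r}$, whereas for ${\rm Re}(z)<-1$ the asserted class $Op^{r{\rm Re}(z)+q_X-N-1}$ is strictly smaller. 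The paper obtains the stated order by a device that is missing from your plan: apply the already-established expansion with $N+l$ in place of $N$, so that $R_N(z,X)=-\sum_{k=N+1}^{N+l}\binom{z}{k}X^{(k)}\Delta^{z-k}+R_{N+l}(z,X)$; each explicit term lies in $Op^{r{\rm Re}(z)+q_X-k}\subseteq Op^{r{\rm Re}(z)+q_X-N-1}$, and $R_{N+l}(z,X)\in Op^{q_X-N-l-1}\subseteq Op^{r{\rm Re}(z)+q_X-N-1}$ once $l$ is taken large enough (depending on ${\rm Re}(z)$). Without this re-expansion, or an equivalent argument, the final assertion of the lemma is not proved.
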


\begin{proof}

The eigenvalues of  $\Delta$ consist in a nondecreasing sequence $0<\lambda_0 \le \lambda_1\le \dots \rightarrow +\infty$. We choose $\Gamma$ to be the vertical line given by the equation $x=c$ with $0<c<\lambda_0$. We first suppose ${\rm Re(z)}<0$. For every integer $k$ less than or equal to  $N$, with the Cauchy formula we obtain
$$\binom{z}{k}X^{(k)}\Delta^{z-k}=\frac{1}{2\pi i}\int_{\Gamma} \lambda^{z}X^{(k)}(\lambda-\Delta)^{-k-1}d\lambda.$$
A simple computation yields
 $$X^{(k)}(\lambda-\Delta)^{-1}=(\lambda-\Delta)^{-1}X^{(k)}-(\lambda-\Delta)^{-1}X^{(k+1)}(\lambda-\Delta)^{-1},$$
Hence
$$\binom{z}{k}X^{(k)}\Delta^{z-k}=\frac{1}{2\pi i}\int \lambda^{z}[(\lambda-\Delta)^{-1}X^{(k)}(\lambda-\Delta)^{-k}-(\lambda-\Delta)^{-1}X^{(k+1)}(\lambda-\Delta)^{-k-1}]d\lambda .$$
Summing from $k=0$ to $N$, we get:
\begin{equation*}
{\sum_{k=0}^N} \binom{z}{k}X^{(k)}\Delta^{z-k}=\displaystyle{\frac{1}{2\pi i}}\int \lambda^{z}(\lambda-\Delta)^{-1}X^{(0)}d\lambda-\displaystyle{\frac{1}{2\pi i}}\int\lambda^{z}(\lambda-\Delta)^{-1}X^{(N+1)}(\lambda-\Delta)^{-N-1}d\lambda.
\end{equation*}
It follows that $[X,\Delta^{z}]=\displaystyle{\sum_{k=1}^N} \binom{z}{k}X^{(k)}\Delta^{z-k}-\frac{1}{2\pi i}\int \lambda^{z}(\lambda-\Delta)^{-1}X^{(N+1)}(\lambda-\Delta)^{-N-1}d\lambda$.\\

Let $R_N(z,X)=\displaystyle{-\frac{1}{2\pi i }\int_{\Gamma} \lambda^{z}(\lambda-\Delta)^{-1}X^{(N+1)}(\lambda-\Delta)^{-N-1}d\lambda}$. This integral is convergent for the
$Op^{q_X-N-1}$  topology, hence it defines a holomorphic function on the open half-plane $\mathbb{C}_{<N+1}$. As $z\mapsto [X,\Delta^{z}]=\displaystyle{\sum_{k=1}^N} \binom{z}{k}X^{(k)}\Delta^{z-k}$ is holomorphic on $\mathbb{C}_{<0}$, the formula is available on $\mathbb{C}_{<N+1}$ by analytic continuation. For any $k\in \llbracket1,N\rrbracket$, we have $X^{(k)}\in {\cal D}^{q_X+k(r-1)} \subset Op^{q_X+k(r-1)}$ and $\Delta^{z-k}\in Op^{r({\rm Re}(z)-k)}$, hence $X^{(k)}\Delta^{z-k}\in Op^{r{\rm Re}(z)+q_X-k}$. For $l\in \mathbb{N}^*$, applying the previous formula to $N+l$ instead of $N+1$, we obtain
$$R_N(z,X)=\displaystyle{-\sum_{k=N+1}^{N+l} \binom{z}{k}X^{(k)}\Delta^{z-k}+R_{N+l}(z,X)},$$
where $R_{N+l}(z,X)$ is convergent for the $Op^{q_X-N-l-1}$ topology. By making $l$ large we can make    $q_X-N-1-l$  as small as we wish, so $R_N(z,X)\in Op^{r{\rm Re}(z)+q_X-N-1}$.
\end{proof}

 We keep  notations of Lemma $\ref{lem:Taylor}$. Let $a$ be a real. On $\mathbb{C}_{<a}$ we have  $R_N(z,X)\in Op^{ q_X+r{\rm Re}(z)-N-1}\subset Op^{ q_X+ra-N-1}$ and $z\mapsto R_N(z,X)$ is holomorphic on $\mathbb{C}_{<N+1}$.
 By making $N$ large we can make  $q_X+ra-N-1$ (resp. $N+1$) as small (resp. large) as we wish. So, if $X\in {\cal D}^{q_X}$ then $([X,\Delta^{z}])_{z \in \mathbb{C}}$ is a holomorphic family of type $q_X-1$. For $N\ge a$, the relation $\eqref{devTaylor}$ is a Taylor expansion $[X,\Delta^{z}]$ of order $q_X+ra-N-1$ and of frontier $a$. From this expansion we obtain that $(T_z)_{z \in \mathbb{C}}$ is a holomorphic family of type $k$ moreover if $X\in {\cal D}^{q_X}$,  then $(XT_z)_{z \in \mathbb{C}}$ et $(T_zX)_{z \in \mathbb{C}}$ are holomorphic families of type $k+q_X$ and $([X,T_z])_{z \in \mathbb{C}}$ is a holomorphic family of type  $k+q_X-1$.

\subsection{Generalized Taylor's expansion}\label{taylor-gen}
In all this paragraph and the next one, we consider an algebra $\left ({\cal D},\Delta,r\right)$ of generalized differential operators and two fixed families of operators:

 $${\cal P}=(P_i)_{1 \le i \le n} \in {\cal D}^n,~~{\cal Q}=(Q_i)_{1 \le i \le n}\in {\cal D}^n.$$
To each couple $(a,b)\in (\mathbb{C}^n)^2$ we associate the operator:

\begin{equation*} \application {H_{a,b}}{{\cal D}}{{\cal D}}{W}{\displaystyle{\sum_{i=1}^n a_i [-Q_i,P_iW]+\sum_{i=1}^n b_i [P_i,Q_iW]}.} \end{equation*}
The next lemma is a  direct application of Taylor's expansion \eqref{lem:Taylor}.

\begin{lem} Let $(a,b)\in (\mathbb{C}^n)^2$. For every $q\in \mathbb{Z}$, we have $H_{a,b}({\rm Hol}(q))\in{\rm Hol}(M+q)$, where $M=\underset{1\le i\le n}\max (order(P_i)+order(Q_i))$.
\end{lem}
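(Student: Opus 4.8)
The plan is to reduce the statement to stability properties of holomorphic families that have already been established, together with the linearity of the construction $H_{a,b}$. Concretely I would use: (i) the facts recorded just after the proof of Lemma~\ref{lem:Taylor} and Definition~\ref{def:holofamily}, that for $(T_z)_{z\in\mathbb{C}}\in{\rm Hol}(k)$ and any $X\in{\cal D}^{q_X}$ one has $(XT_z)_{z\in\mathbb{C}},(T_zX)_{z\in\mathbb{C}}\in{\rm Hol}(k+q_X)$ and $([X,T_z])_{z\in\mathbb{C}}\in{\rm Hol}(k+q_X-1)$; and (ii) the elementary observations, immediate from Definition~\ref{def:holofamily}, that ${\rm Hol}(k)$ is stable under scalar multiplication (multiply each polynomial $p(z)$ by the scalar) and under finite sums (concatenate the constituent elementary families and add the remainders, using that $Op^m$ is a vector space), and that ${\rm Hol}(k)\subseteq{\rm Hol}(k+1)$.

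First I would fix $(T_z)_{z\in\mathbb{C}}\in{\rm Hol}(q)$ and an index $i\in\llbracket 1,n\rrbracket$, and write $p_i=order(P_i)$, $\bar q_i=order(Q_i)$, so that $P_i\in{\cal D}^{p_i}$ and $Q_i\in{\cal D}^{\bar q_i}$. Applying (i) twice gives $(P_iT_z)_{z\in\mathbb{C}}\in{\rm Hol}(q+p_i)$, hence $([-Q_i,P_iT_z])_{z\in\mathbb{C}}\in{\rm Hol}(q+p_i+\bar q_i-1)$; symmetrically $(Q_iT_z)_{z\in\mathbb{C}}\in{\rm Hol}(q+\bar q_i)$ and $([P_i,Q_iT_z])_{z\in\mathbb{C}}\in{\rm Hol}(q+p_i+\bar q_i-1)$. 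Since $p_i+\bar q_i\le M$ by definition of $M$, both families lie in ${\rm Hol}(q+M-1)\subseteq{\rm Hol}(q+M)$ by the inclusion in (ii).

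Then I would assemble the pieces: by scalar stability each family $a_i[-Q_i,P_iT_z]$ and each $b_i[P_i,Q_iT_z]$ belongs to ${\rm Hol}(q+M)$, and their sum over $i=1,\dots,n$ is exactly $H_{a,b}(T_z)$, which therefore belongs to ${\rm Hol}(q+M)$ by stability under finite sums. This yields $H_{a,b}({\rm Hol}(q))\subseteq{\rm Hol}(M+q)$.

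I do not expect a genuine obstacle: the argument is essentially bookkeeping on types. The one point that should not be skipped is that the Taylor-expansion clause of Definition~\ref{def:holofamily} is quantified over every frontier $a$ and every order $m$, so when combining finitely many families one must work with a common $a$ and $m$ and observe that the sum of the corresponding remainders still lies in $Op^m$ — which is clear since $Op^m$ is closed under finite sums. (In fact the computation produces the slightly sharper bound ${\rm Hol}(q+M-1)$; the stated type $M+q$ follows by the monotonicity ${\rm Hol}(k)\subseteq{\rm Hol}(k+1)$.)
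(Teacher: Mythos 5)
Your proof is correct and follows the paper's intended route: the paper gives no separate argument for this lemma, stating only that it is a direct application of Taylor's expansion (Lemma \ref{lem:Taylor}), i.e.\ precisely the stability facts for ${\rm Hol}(k)$ under products and commutators with elements of ${\cal D}$ that you invoke, together with linearity. One small caveat: your parenthetical sharper bound ${\rm Hol}(q+M-1)$ rests on the remark $([X,T_z])_{z\in\mathbb{C}}\in{\rm Hol}(k+q_X-1)$, which in the full generality of Definition \ref{algdif} needs commutators in ${\cal D}$ to lower the filtration order (the term $[X,X_j]\Delta^{z-n}$ is only of type $k+q_X$ otherwise); but since the stated bound ${\rm Hol}(q+M)$ already follows from writing each commutator as a difference of two products and using the product stability alone, your conclusion is unaffected.
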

A ${\rm Hol}(q)$ family is transformed into a ${\rm Hol}(pM+q)$ family by composing $p$ times such operators. Now let us generalize  Taylor expansions.

\begin{lem}$\label{lem:Taylor Hab X}$
Let $(a,b)\in (\mathbb{C}^n)^2$. Let $X\in {\cal D}^{q_X}$ and $z\in \mathbb{C}$. For every $N\in \mathbb{N}^*$ such that $N+1>{\rm Re}(z)$ we have
\begin{equation*}
H_{a,b}(X\Delta^z)=H_{a,b}(X)\Delta^z+\sum_{k=1}^N \binom{z}{k}A_{a,b}^{(k)}(X)\Delta^{z-k}+R_{N,H_{a,b}}(z,X),
\end{equation*}
with $ A_{a,b}^{(k)}(X)=\sum_{i=1}^n a_i P_iX(Q_i)^{(k)}- b_i Q_iX(P_i)^{(k)}$ for any $k\in  \llbracket1,N\rrbracket$, and
$$R_{N,H_{a,b}}(z,X)=\sum_{i=1}^n a_i P_iXR_N(z,-Q_i)+ b_i Q_iXR_N(z,P_i).$$
Let $M=\underset{1\le i\le n}\max (order(P_i)+order(Q_i))$, then the following assertions hold:
\begin{enumerate}[nolistsep]
\item $H_{a,b}(X)\Delta^{z}\in Op^{r{\rm Re}(z)+q_X+M}$.
\item for every $ k \in \llbracket1,N\rrbracket $, $A_{a,b}^{(k)}(X)\in {\cal D}^{q_X+M+(r-1)k}$.
\item for every $ k \in \llbracket1,N\rrbracket$,  $A_{a,b}^{(k)}(X)\Delta^{z-k}\in Op^{r{\rm Re}(z)+q_X+M-k}$.
\item  $R_{N,H_{a,b}}(z,X)\in Op^{r{\rm Re}(z)+q_X+M-N-1}$.
\end{enumerate}
\end{lem}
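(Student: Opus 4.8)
The strategy is to expand $H_{a,b}(X\Delta^z)$ directly from the definition of $H_{a,b}$ and then apply the ordinary Taylor expansion (Lemma~\ref{lem:Taylor}) to each commutator with $\Delta^z$ that appears. Writing out $H_{a,b}(X\Delta^z)=\sum_i a_i[-Q_i,P_iX\Delta^z]+\sum_i b_i[P_i,Q_iX\Delta^z]$, I would use the Leibniz rule $[Y,ZX\Delta^z]=[Y,ZX]\Delta^z+ZX[Y,\Delta^z]$ to separate, for each $i$, a ``leading'' part $[-Q_i,P_iX]\Delta^z$ (resp. $[P_i,Q_iX]\Delta^z$), which reassembles into $H_{a,b}(X)\Delta^z$, from a ``commutator-with-$\Delta^z$'' part $P_iX[-Q_i,\Delta^z]$ (resp. $Q_iX[P_i,\Delta^z]$). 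Into these last terms I substitute the Connes--Moscovici expansion $[Y,\Delta^z]=-\sum_{k=1}^N\binom{z}{k}Y^{(k)}\Delta^{z-k}+R_N(z,Y)$ with $Y=-Q_i$ and $Y=P_i$ respectively; collecting the coefficient of $\Delta^{z-k}$ yields exactly $A_{a,b}^{(k)}(X)=\sum_i a_iP_iX(Q_i)^{(k)}-b_iQ_iX(P_i)^{(k)}$ (note $(-Q_i)^{(k)}=-(Q_i)^{(k)}$, which produces the sign), and the leftover integral terms assemble into $R_{N,H_{a,b}}(z,X)=\sum_i a_iP_iXR_N(z,-Q_i)+b_iQ_iXR_N(z,P_i)$. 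This establishes the displayed identity; its validity for all $z$ with $N+1>\mathrm{Re}(z)$ is inherited from Lemma~\ref{lem:Taylor}, which holds on $\mathbb{C}_{<N+1}$.

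For the four order estimates I would simply track filtration degrees. For (1): $H_{a,b}(X)\in{\cal D}^{q_X+M}$ because each summand $[-Q_i,P_iX]$ has order $\le \mathrm{order}(Q_i)+\mathrm{order}(P_i)+q_X-1\le M+q_X$ (a commutator of a degree-$q_X+\mathrm{order}(P_i)$ operator with a degree-$\mathrm{order}(Q_i)$ operator lands in degree $q_X+\mathrm{order}(P_i)+\mathrm{order}(Q_i)$, even without the $-1$), hence $H_{a,b}(X)\Delta^z\in \mathrm{Op}^{q_X+M}\cdot\mathrm{Op}^{r\mathrm{Re}(z)}\subseteq\mathrm{Op}^{r\mathrm{Re}(z)+q_X+M}$ by Lemma~\ref{cor:Op} and the filtration property of $\mathrm{Op}$. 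For (2): $(Q_i)^{(k)}={\rm ad}^k\Delta(Q_i)\in{\cal D}^{\mathrm{order}(Q_i)+k(r-1)}$ by the commutator-order lemma, so $P_iX(Q_i)^{(k)}\in{\cal D}^{\mathrm{order}(P_i)+q_X+\mathrm{order}(Q_i)+k(r-1)}\subseteq{\cal D}^{q_X+M+(r-1)k}$, and similarly for the $Q_iX(P_i)^{(k)}$ term; summing over $i$ keeps us in ${\cal D}^{q_X+M+(r-1)k}$. For (3): multiply (2) by $\Delta^{z-k}\in\mathrm{Op}^{r(\mathrm{Re}(z)-k)}$ and add exponents: $q_X+M+(r-1)k+r\mathrm{Re}(z)-rk=r\mathrm{Re}(z)+q_X+M-k$. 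For (4): from Lemma~\ref{lem:Taylor}, $R_N(z,-Q_i)\in\mathrm{Op}^{r\mathrm{Re}(z)+\mathrm{order}(Q_i)-N-1}$ and $R_N(z,P_i)\in\mathrm{Op}^{r\mathrm{Re}(z)+\mathrm{order}(P_i)-N-1}$; multiplying on the left by $P_iX\in{\cal D}^{\mathrm{order}(P_i)+q_X}\subseteq\mathrm{Op}^{\mathrm{order}(P_i)+q_X}$ (resp. $Q_iX\in\mathrm{Op}^{\mathrm{order}(Q_i)+q_X}$) and adding analytic orders gives each summand in $\mathrm{Op}^{r\mathrm{Re}(z)+q_X+\mathrm{order}(P_i)+\mathrm{order}(Q_i)-N-1}\subseteq\mathrm{Op}^{r\mathrm{Re}(z)+q_X+M-N-1}$.

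The only genuinely delicate point is bookkeeping the Leibniz rule correctly so that the leading terms really do recombine into $H_{a,b}(X)\Delta^z$ rather than into $H_{a,b}(X\Delta^z)$ with spurious cross-terms, and making sure the sign $(-Q_i)^{(k)}=-(Q_i)^{(k)}$ is propagated through to give $A_{a,b}^{(k)}(X)$ with a minus on the $Q_i$-term; everything else is routine filtration arithmetic using Lemma~\ref{cor:Op} and the filtered-subalgebra property ${\cal D}^q\subseteq\mathrm{Op}^q$. I would also remark, as the excerpt does for Lemma~\ref{lem:Taylor}, that since the estimates in (1)--(4) and the holomorphy of $z\mapsto R_{N,H_{a,b}}(z,X)$ on $\mathbb{C}_{<N+1}$ can both be improved by taking $N$ large, this lemma shows that $(H_{a,b}(X\Delta^z))_{z\in\mathbb{C}}$ is a holomorphic family of type $q_X+M$ and that the displayed formula is a Taylor expansion of it in the sense of Definition~\ref{def:holofamily}.
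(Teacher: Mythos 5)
Your proposal is correct and follows essentially the same route as the paper: apply the Leibniz rule so that $H_{a,b}(X\Delta^z)=H_{a,b}(X)\Delta^z+\sum_i\bigl(a_iP_iX[-Q_i,\Delta^z]+b_iQ_iX[P_i,\Delta^z]\bigr)$, substitute Lemma~\ref{lem:Taylor} into each commutator, and read off the four order estimates by filtration arithmetic together with ${\cal D}^q\subseteq{\rm Op}^q$ and $\Delta^{z-k}\in{\rm Op}^{r({\rm Re}(z)-k)}$. Your exponent count in item (3), giving $r{\rm Re}(z)+q_X+M-k$, is the correct one stated in the lemma.
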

\begin{proof}
We have
\begin{equation*}
H_{a,b}(X\Delta^z)=H_{a,b}(X)\Delta^z+\sum_{i=1}^n (a_i P_iX[-Q_i,\Delta^z])+b_iQ_iX[P_i,\Delta^z]).
\end{equation*}

Applying to each commutator the Taylor formula  $\eqref{lem:Taylor}$, we obtain
\begin{equation*}
\begin{split}
H_{a,b}(X\Delta^z)&=H_{a,b}(X)\Delta^z\\[-6pt]
                        &+\sum_{k=1}^N \binom{z}{k} \left (\sum_{i=1}^n a_i P_iX(Q_i)^{(k)}- b_i Q_iX(P_i)^{(k)} \right )\Delta^{z-k}\\[-6pt]
                        &+\sum_{i=1}^n a_i P_iXR_N(z,-Q_i)+ b_i Q_iXR_N(z,P_i).\\
\end{split}
\end{equation*}

\begin{enumerate}
\item We have
       $H_{a,b}(X) \in {\cal D}^{q_X+M}\subset Op^{q_X+M}$ and $\Delta^{z} \in Op^{r{\rm Re}(z)}$,

       then
       $H_{a,b}(X)\Delta^{z}\in Op^{r{\rm Re}(z)+q_X+M}$.

\item Let $k\in  \llbracket1,N\rrbracket$ and let $i \in \llbracket 1,n \rrbracket$,
       $P_iX(Q_i)^{(k)} \in {\cal D}^{q_X+M+(r-1)k}$ and

      $Q_iX(P_i)^{(k)}\in {\cal D}^{q_X+M+(r-1)k}$,
      $A_{a,b}^{(k)}(X)\in {\cal D}^{q_X+M+(r-1)k}$.

\item Let $k\in  \llbracket1,N\rrbracket$, we have
     $A_{a,b}^{(k)}(X)\in {\cal D}^{q_X+M+(r-1)k}\subset{Op}^{q_X+M+(r-1)k}$ and $\Delta^{z-k}\in Op^{r({\rm Re}(z)-k)}$,
     then $A_{a,b}^{(k)}(X)\Delta^{z-k})\in Op^{r({\rm Re}(z)-k)+q_X+M}$.

\item Let $k\in  \llbracket1,N\rrbracket$ and let $i \in \llbracket 1,n \rrbracket$. From $\eqref{lem:Taylor}$, $a_iP_iXR_N(z,-Q_i)+b_iQ_iXR_N(z,P_i)\in Op^{r{\rm Re}(z)+q_X+M-N-1}$  then $R_{N,H_{a,b}}(z)\in Op^{r{\rm Re}(z)+q_X+M-N-1}$.
\end{enumerate}
\end{proof}

\begin{lem}$\label{lem:Taylor HlambdaX}$ Let $ p \in \mathbb{N}^*$ and $\Lambda_p=(a^m,b^m)_{m\in \llbracket1,p\rrbracket}\in (\mathbb{N}^n \times \mathbb{N}^n)^p$. Let
$$H_{\Lambda_p}=H_{a^p,b^p}\circ \dots \circ H_{a^1,b^1}.$$
Let $X\in {\cal D}^{q_X}$ and $z\in \mathbb{C}$. For every $N\in \mathbb{N}^*$ such that $N+1>{\rm Re}(z)$ we have the Taylor expansion:

\begin{equation*}
H_{\Lambda_p}(X\Delta^z)=\sum_{k=0}^N \binom{z}{k}C_{k,\Lambda_p}(X)\Delta^{z-k}+R_{\Lambda_p,N}(z,X),
\end{equation*}
 with $C_{k,\Lambda_p}(X) \in {\cal D}$ and   $R_{\Lambda_p,N}(z,X) \in Op$, for every $ k\in \llbracket0,N\rrbracket$. Moreover,
\begin{enumerate}[nolistsep]
\item for every $ k \in \llbracket1,N\rrbracket$, $C_{k,\Lambda_p}(X)\in {\cal D}^{q_X+pM+(r-1)k}$,
\item for every $ k \in \llbracket1,N\rrbracket$, $C_{k,\Lambda_p}(X)\Delta^{z-k}\in Op^{ r{\rm Re}(z)+q_X+pM-k}$,
\item  $R_{\Lambda_p,N}(z,X)\in Op^{ r{\rm Re}(z)+q_X+pM-N-1}$.
\end{enumerate}
\end{lem}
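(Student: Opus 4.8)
The plan is to prove Lemma \ref{lem:Taylor HlambdaX} by induction on $p$, with the case $p=1$ being exactly Lemma \ref{lem:Taylor Hab X} (after absorbing the $k=0$ term $H_{a,b}(X)\Delta^z$ into the sum, using $\binom{z}{0}=1$, and setting $C_{0,\Lambda_1}(X)=H_{a^1,b^1}(X)$, $C_{k,\Lambda_1}(X)=A_{a^1,b^1}^{(k)}(X)$). For the inductive step, I would assume the expansion holds for $\Lambda_{p-1}=(a^m,b^m)_{m\in\llbracket 1,p-1\rrbracket}$ and apply the operator $H_{a^p,b^p}$ to both sides. Since $H_{a^p,b^p}$ is $\mathbb{C}$-linear, this gives
\begin{equation*}
H_{\Lambda_p}(X\Delta^z)=\sum_{k=0}^N \binom{z}{k}H_{a^p,b^p}\!\left(C_{k,\Lambda_{p-1}}(X)\Delta^{z-k}\right)+H_{a^p,b^p}\!\left(R_{\Lambda_{p-1},N}(z,X)\right).
\end{equation*}
To each term $H_{a^p,b^p}(C_{k,\Lambda_{p-1}}(X)\Delta^{z-k})$ I would apply Lemma \ref{lem:Taylor Hab X} with the operator $C_{k,\Lambda_{p-1}}(X)$ in place of $X$ and exponent $z-k$ in place of $z$, choosing the truncation order in that lemma to be $N-k$ so that the total power $z-k-j$ never goes below $z-N$. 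Collecting all contributions that produce a fixed power $\Delta^{z-l}$ (from $k+j=l$, with $j\in\llbracket 0,N-k\rrbracket$) and reindexing, one reads off the new coefficients $C_{l,\Lambda_p}(X)$ as finite $\mathbb{C}$-linear combinations of the $A_{a^p,b^p}^{(j)}(C_{k,\Lambda_{p-1}}(X))$ and $H_{a^p,b^p}(C_{k,\Lambda_{p-1}}(X))$, times ratios of binomial coefficients $\binom{z}{k}\binom{z-k}{j}/\binom{z}{l}$, which are polynomial in $z$ (indeed $\binom{z}{k}\binom{z-k}{j}=\binom{z}{k+j}\binom{k+j}{k}$, so the binomial bookkeeping is clean and the polynomial prefactors can be reabsorbed into the definition of the $C$'s). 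The remainder $R_{\Lambda_p,N}(z,X)$ is the sum of $H_{a^p,b^p}(R_{\Lambda_{p-1},N}(z,X))$ and all the individual $R$-terms $R_{N-k,H_{a^p,b^p}}(z-k,C_{k,\Lambda_{p-1}}(X))$ coming from the applications of Lemma \ref{lem:Taylor Hab X}.

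The order estimates then follow by propagating the bounds from Lemma \ref{lem:Taylor Hab X} through the induction. By the inductive hypothesis $C_{k,\Lambda_{p-1}}(X)\in{\cal D}^{q_X+(p-1)M+(r-1)k}$; applying part (2) of Lemma \ref{lem:Taylor Hab X} with $q_X$ replaced by $q_X+(p-1)M+(r-1)k$ and the commutator index $j$ gives $A_{a^p,b^p}^{(j)}(C_{k,\Lambda_{p-1}}(X))\in{\cal D}^{q_X+(p-1)M+(r-1)k+M+(r-1)j}={\cal D}^{q_X+pM+(r-1)(k+j)}$, which is exactly ${\cal D}^{q_X+pM+(r-1)l}$ with $l=k+j$, giving assertion (1). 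Assertion (2) follows from (1) together with $\Delta^{z-l}\in Op^{r({\rm Re}(z)-l)}$ and the fact (established just after Corollary \ref{prop:OpL}) that ${\cal D}^q\subseteq Op^q$. For assertion (3), part (4) of Lemma \ref{lem:Taylor Hab X} gives each $R_{N-k,H_{a^p,b^p}}(z-k,C_{k,\Lambda_{p-1}}(X))\in Op^{r{\rm Re}(z-k)+(q_X+(p-1)M+(r-1)k)+M-(N-k)-1}=Op^{r{\rm Re}(z)+q_X+pM-N-1-k(r-2)}$; since one needs an upper bound and $Op^t\subseteq Op^{t'}$ for $t\le t'$, a uniform choice is obtained by noting that the worst term is controlled by $Op^{r{\rm Re}(z)+q_X+pM-N-1}$ provided the truncation orders are chosen large enough (this is where one uses the freedom, as in the paragraph after Lemma \ref{lem:Taylor}, to take each intermediate $N$ sufficiently large so that every partial remainder lands in the required $Op$-space). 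Finally $H_{a^p,b^p}(R_{\Lambda_{p-1},N}(z,X))\in Op^{r{\rm Re}(z)+q_X+(p-1)M+M-N-1}$ by the inductive bound on $R_{\Lambda_{p-1},N}$ together with the fact that composition with $H_{a^p,b^p}$ raises $Op$-order by at most $M$ (which follows from $H_{a^p,b^p}$ being a sum of operators $W\mapsto P_iWQ_i^{(\cdot)}$-type terms of order $\le M$, compatibly with the first displayed lemma of Paragraph \ref{taylor-gen}).

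The main obstacle I anticipate is purely bookkeeping rather than conceptual: keeping the truncation orders consistent across the iterated application of Lemma \ref{lem:Taylor Hab X}. When $H_{a^p,b^p}$ is applied to $C_{k,\Lambda_{p-1}}(X)\Delta^{z-k}$, one must truncate at order $N-k$ (not $N$) to keep all powers $\Delta^{z-l}$ with $l\le N$, and one must check that the leftover remainders from truncating early are still absorbable into a single $Op$-space of the claimed order — this requires invoking, just as in the remark following Lemma \ref{lem:Taylor}, the flexibility to enlarge $N$ at each stage so that $r{\rm Re}(z)+q_X+pM-N-1$ is as negative as desired while the domain of validity ${\rm Re}(z)<N+1$ stays large. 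A secondary subtlety is the binomial identity $\binom{z}{k}\binom{z-k}{j}=\binom{z}{k+j}\binom{k+j}{k}$ used to verify that the coefficients $C_{l,\Lambda_p}(X)$ can indeed be taken in ${\cal D}$ with only polynomial-in-$z$ scalar factors, so that after reabsorbing these scalars the statement has exactly the clean form asserted; this is a one-line computation but is the reason the lemma's conclusion ``$C_{k,\Lambda_p}(X)\in{\cal D}$'' is literally true rather than only true up to rational functions of $z$.
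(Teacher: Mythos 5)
Your proposal follows essentially the same route as the paper: induction on $p$, applying Lemma \ref{lem:Taylor Hab X} to each $C_{k,\Lambda_{p-1}}(X)\Delta^{z-k}$ at truncation order $N-k$, regrouping the powers of $\Delta$ via $\binom{z}{k}\binom{z-k}{j}=\binom{z}{k+j}\binom{k+j}{k}$ so that the coefficients $C_{l,\Lambda_p}(X)$ are genuine elements of ${\cal D}$ (with constant, not $z$-dependent, scalar factors), collecting the leftover terms into the new remainder, and propagating the order estimates; this is exactly the paper's argument.

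One arithmetic slip should be fixed, though it does not affect the viability of the approach. For the remainder terms you compute the exponent $r{\rm Re}(z-k)+\bigl(q_X+(p-1)M+(r-1)k\bigr)+M-(N-k)-1$ and report it as $r{\rm Re}(z)+q_X+pM-N-1-k(r-2)$; in fact $-rk+(r-1)k+k=0$, so the exponent equals $r{\rm Re}(z)+q_X+pM-N-1$ exactly, for every $k$ and every $r\ge 1$. Hence no ``worst term'' analysis and no enlargement of truncation orders is needed: each $R_{N-k,H_{a^p,b^p}}(z-k,C_{k,\Lambda_{p-1}}(X))$, as well as $H_{a^p,b^p}(R_{\Lambda_{p-1},N}(z,X))$ (using that $H_{a^p,b^p}$ raises analytic order by at most $M$), lands directly in $Op^{r{\rm Re}(z)+q_X+pM-N-1}$, which is the paper's estimate. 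Note also that your fallback argument, as phrased, sits uneasily with the statement: $N$ is fixed and the inner truncations at order $N-k$ are dictated by which powers of $\Delta$ you keep, and with your (erroneous) exponent the case $r=1$ would not even be covered by the monotonicity of the spaces $Op^t$; one could repair it by expanding deeper and pushing the surplus elementary terms (which lie in $Op^{r{\rm Re}(z)+q_X+pM-l}$ with $l\ge N+1$) into the remainder, but the exact cancellation makes that detour unnecessary.
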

\begin{proof}
By induction on $p$. The case $p=1$ is the subject of Lemma $\ref{lem:Taylor Hab X}$. Let us assume the result  for the composition of $p$ operators. For  $\Lambda_p=\left ((a_1,b_1),\dots,(a_p,b_p) \right )$ we denote by $H_{\Lambda_p}$ the product $\prod_{m=1}^p H_{a^m,b^m}$. We shall write

\begin{equation*} H_{\Lambda_p}(X\Delta^z)=\sum_{k=0}^N \binom{z}{k}C_{k,\Lambda_p}(X)\Delta^{z-k}+R_{\Lambda_p,N}(z,X). \end{equation*}

 Let $(a,b)\in (\mathbb{N}^n)^2$.  Denote $\Lambda_{p+1}=\left ((a,b),\Lambda_p \right )$ and $H_{\Lambda_{p+1}}=H_{a,b}\circ H_{\Lambda_p}$. From linearity of $H_{a,b}$ we obtain

\begin{equation*} H_{\Lambda_{p+1}}(X\Delta ^z)=\sum_{k=0}^N \binom{z}{k}H_{a,b}(C_{k,\Lambda_p}(X)\Delta^{z-k})+H_{a,b}(R_{\Lambda_p,N}(z,X)). \end{equation*}

For every $k\in \llbracket0,N\rrbracket$, by expanding $H_{a,b}(C_{k,\Lambda_p}(X)\Delta^{z-k})$ to the order $N-k$ according to Lemma $\ref{lem:Taylor Hab X}$, we obtain:

\begin{equation*}
\begin{split}
H_{\Lambda_{p+1}}(X\Delta ^z)&= \sum_{k=0}^N \binom{z}{k}H_{a,b}(C_{k,\Lambda_p}(X))\Delta^{z-k}\\[-6pt]
                                                   &+ \sum_{k=1}^N \left(\sum_{t=0}^{k-1}\binom{z}{t}\binom{z-t}{k-t}A_{a,b}^{(k-t)}(C_{t,\Lambda_p}(X)) \right)\Delta^{z-k}\\[-6pt]
                                                   &+ \sum_{k=0}^N \binom{z}{k}R_{N-k,H_{a,b}}(z-k,C_{k,\Lambda_p}(X))\\[-6pt]
                                                   &+ \sum_{k=0}^N  \binom{z}{k}\left ( \sum_{i=1}^n a_i[P_i,C_{k,\Lambda_p}(X)]R_{N-k}(z-k,-Q_i)+b_i[Q_i,C_{k,\Lambda_p}]R_{N-k}(z-k,P_i) \right )\\[-6pt]
                                                   &+ H_{a,b}(R_{N,\Lambda_p}(z,X)).
\end{split}
\end{equation*}

Denote for every  $k\in\llbracket0,N\rrbracket$,
\begin{equation*}
 C_{k,\Lambda_{p+1}}(X)= H_{a,b}(C_{k,\Lambda_p}(X))+ \left(\sum_{t=0}^{k-1}\binom{k}{t}A_{a,b}^{(k-t)}(C_{t,\Lambda_p}(X)) \right).
\end{equation*}

By grouping the last three terms in $R_{\Lambda_{p+1},N}(z,X)$, we can write

 \begin{equation*}
H_{\Lambda_{p+1}}(X\Delta ^z)=\sum_{k=0}^N \binom{z}{k}C_{k,\Lambda_{p+1}(X)}\Delta^{z-k}
                                                     + R_{\Lambda_{p+1},N}(z,X).
\end{equation*}

Concerning  the orders,
\begin{enumerate}[nolistsep]
\item For $p\in \mathbb{N}^*$, we define the property
$$ P_p:~~~~\forall k \in \mathbb{N},~~C_{k,\Lambda_p}(X)\in {\cal D}^{q_X+pM+(r-1)k}.$$
We shall prove that $P_p$ is true for every $p\ge 1$ by induction on $p$.

For  $p=1$. If $k=0$, $C_{0,\Lambda_1}(X)=H_{a,b}(X)\in {\cal D}^{q_X+M}$. If $k\ge 1$, $C_{k,\Lambda_1}(X)=A_{a,b}^{(k)}(X)\in {\cal D}^{q_X+1.M+(r-1)k}$ from Lemma $\ref{lem:Taylor Hab X}$.\\

Now suppose that $P_p$ is true. If $k=0$, $C_{0,\Lambda_{p+1}}(X)=H_{a,b}\big(C_{0,\Lambda_{p}}(X)\big)\in {\cal D}^{ q_X+(p+1)M+0}$. If $k\ge 1$, for every $t\in \llbracket 0,k-1\rrbracket$ we have $ A_{a,b}^{(k-t)}\in {\cal D}^{M+(r-1)(k-t)}$ from Lemma $\ref{lem:Taylor Hab X}$ and $C_{t,\Lambda_{p+1}}(X)\in {\cal D}^{q_X+pM+(r-1)t}$ by induction hypothesis. Then
$$A_{a,b}^{(k-t)}(C_{t,\Lambda_p}(X))\in {\cal D}^{q_X+M+pM+(r-1)t+(r-1)(k-t)}={\cal D}^{ (p+1)M+(r-1)k}.$$
So
$$\displaystyle{\sum_{t=0}^{k-1}\binom{k}{t}A_{a,b}^{(k-t)}(C_{t,\Lambda_{p+1}}(X))\in {\cal D}^{q_X+ (p+1)M+(r-1)k}}.$$
On the other hand, $\displaystyle{\ H_{a,b}(C_{k,\Lambda_{p}}(X))\in {\cal D}^{  M+q_X+pM+(r-1)k}={\cal D}^{q_X+(p+1)M+(r-1)k}}$. Hence we have proved that $P_{p+1}$ is true.

\item Direct from the previous point.
\item We shall now prove that for every $p\in \mathbb{N}^*$,  $R_{\Lambda_p,N}(z,X))\in Op^{ q_X+r{\rm Re}(z)+pM-N-1}$, by induction on $p$. For $p=1$ it is the Lemma $\ref{lem:Taylor Hab X}$.

For every $p\in \mathbb{N}^*$,
\begin{equation*}
\begin{split}
R_{\Lambda_{p+1},N}(z,X)&= \sum_{k=0}^N \binom{z}{k}R_{N-k,H_{a,b}}(z-k,C_{k,\Lambda_p}(X))\\[-6pt]
&+ \sum_{k=0}^N  \binom{z}{k}\left ( \sum_{i=1}^n a_i[P_i,C_{k,\Lambda_p}(X)]R_{N-k}(z-k,-Q_i)+b_i[Q_i,C_{k,\Lambda_p}(X)]R_{N-k}(z-k,P_i)\right)\\[-6pt]
&+ H_{a,b}(R_{N,\Lambda_p}(z,X)).
\end{split}
\end{equation*}

For every $k\in \llbracket 0,N \rrbracket$ and any $1\le i \le n$, we have $C_{k,\Lambda_p}(X)\in{\cal D}^{q_X+(p+1)M+(r-1)k}$, hence $$R_{N-k,H_{a,b}}(z-k,C_{k,\Lambda_p}(X))\in Op^{q_X+r{\rm Re}(z)+(p+1)M-N-1},$$

 $$[P_i,C_{k,\Lambda_p}(X)]R_{N-k}(z-k,-Q_i))\in Op^{q_X+{r\rm Re}(z)+(p+1)M-N-1},$$

  and $$[Q_i,C_{k,\Lambda_p}(X)]R_{N-k}(z-k,P_i))\in Op^{q_X+{r\rm Re}(z)+(p+1)M-N-1}.$$
 From the induction hypothesis
$R_{N,\Lambda_p}(z)\in Op^{q_X+{r\rm Re}(z)+pM-N-1}$, hence $H_{a,b}(R_{N,\Lambda_p}(z,X))\in Op^{{r\rm Re}(z)+(p+1)M-N-1}$.
So $R_{\Lambda_{p+1},N}(z)\in Op^{{r\rm Re}(z)+(p+1)M-N-1}$, and  Lemma $\ref{lem:Taylor HlambdaX}$ is proved.
\end{enumerate}
\end{proof}

To  $\Lambda_p=(a^m,b^m)_{m\in \llbracket1,p\rrbracket}\in (\mathbb{R}^n \times \mathbb{R}^n)^p$ and $\Gamma_p=(\alpha_m,\beta_m)_{m\in \llbracket1,p\rrbracket} \in (\mathbb{R}^2)^p$ we associate
 two sequences  $(T_s)_{s \in \mathbb{N}^*}$ and a family $(T_{z'})_{z' \in \mathbb{C}}$ of operators in  ${\rm End}(\Hi^{\infty})$ defined by:

\begin{equation*}\forall s\in \mathbb{N}, ~~T_s=\prod_{m=1}^p \left (H_{a^m,b^m}-(\alpha_m s+ \beta_m)Id_{Op}  \right ).\end{equation*}

\begin{equation*}\forall z'\in \mathbb{C},~~ T_{z'}=\prod_{m=1}^p \left (H_{a^m,b^m}-(\alpha_m z'+ \beta_m)Id_{Op}  \right ).\end{equation*}

We denote by $b$ the polynomial function:

 \begin{equation*} \forall z'\in \mathbb{C},~~b(z')=(-1)^{p+1}\prod_{m=1}^p (\alpha_m z'+ \beta_m), \end{equation*}

and by $H_{z'}$ the operator:

\begin{equation*}\forall z'\in \mathbb{C},~~ H_{z'}=b(z')Id_{Op}+ T_{z'}. \end{equation*}

We can remark that $H_{z'}$ is a composition of commutators.

\begin{lem}\label{lem:Taylor Hzx} Let $X\in {\cal D}_q$. Let $(z,z') \in \mathbb{C}^2$. For every $N\in \mathbb{N}^*$ such that $N+1>{\rm Re}(z)$ we have:
\begin{equation*} ~H_{z'}(X\Delta^z)=\sum_{k=0}^N \binom{z}{k}D_k(z',X)\Delta^{z-k}+R_{N}(z,z',X),
\end{equation*}
for every  $k\in \llbracket0,N\rrbracket$, $D_k(z',X) \in {\cal D}_{p}[z']$ (polynomial in $z'$  with degree less than or equal to  $p$  and with coefficients in ${\cal D}$). The orders behave as follows:
\begin{enumerate}[nolistsep]
\item For every $k\in \llbracket0,N\rrbracket$, $D_k(z',X)\in {\cal D}^{ q_X+pM+(r-1)k}$.
\item For every $k\in \llbracket0,N\rrbracket$, $D_k(z',X)\Delta^{z-k}\in Op^{ r{\rm Re}(z)+q_X+pM-k}$.
\item We have $R_{N}(z,z',X)\in Op^{r{\rm Re}(z)+q_X+pM-N-1}.$
\end{enumerate}
\end{lem}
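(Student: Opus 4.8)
The plan is to prove the statement by induction on the number $p$ of factors in $H_{z'}$, mimicking the structure of the proof of Lemma~$\ref{lem:Taylor HlambdaX}$ but carrying the polynomial dependence on $z'$ throughout. First, I would expand the product defining $H_{z'}=b(z')\,\mathrm{Id}_{Op}+\prod_{m=1}^p\bigl(H_{a^m,b^m}-(\alpha_m z'+\beta_m)\mathrm{Id}_{Op}\bigr)$. Expanding this product gives a sum over subsets $S\subseteq\llbracket 1,p\rrbracket$ of terms of the form $\pm\bigl(\prod_{m\notin S}(\alpha_m z'+\beta_m)\bigr)H_{\Lambda_S}$, where $H_{\Lambda_S}$ is a composition of $|S|$ operators of the type treated in Lemma~$\ref{lem:Taylor HlambdaX}$ (the $b(z')\,\mathrm{Id}$ term cancels the empty-subset contribution $\prod_m(\alpha_m z'+\beta_m)(-1)^p\mathrm{Id}$, so only $S\neq\emptyset$ survives, though even keeping it would do no harm). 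The scalar coefficients $\prod_{m\notin S}(\alpha_m z'+\beta_m)$ are polynomials in $z'$ of degree $p-|S|\le p$.

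Next I would apply Lemma~$\ref{lem:Taylor HlambdaX}$ to each $H_{\Lambda_S}(X\Delta^z)$ with the same $N$, obtaining
\begin{equation*}
H_{\Lambda_S}(X\Delta^z)=\sum_{k=0}^N\binom{z}{k}C_{k,\Lambda_S}(X)\Delta^{z-k}+R_{\Lambda_S,N}(z,X),
\end{equation*}
with $C_{k,\Lambda_S}(X)\in{\cal D}^{q_X+|S|M+(r-1)k}\subseteq{\cal D}^{q_X+pM+(r-1)k}$ (using $|S|\le p$ and $M\ge 0$) and $R_{\Lambda_S,N}(z,X)\in Op^{q_X+r{\rm Re}(z)+|S|M-N-1}\subseteq Op^{q_X+r{\rm Re}(z)+pM-N-1}$. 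Then I would set
\begin{equation*}
D_k(z',X)=\sum_{\emptyset\neq S\subseteq\llbracket 1,p\rrbracket}(-1)^{p-|S|+1}\Bigl(\prod_{m\notin S}(\alpha_m z'+\beta_m)\Bigr)C_{k,\Lambda_S}(X),\qquad
R_N(z,z',X)=\sum_{\emptyset\neq S}(-1)^{p-|S|+1}\Bigl(\prod_{m\notin S}(\alpha_m z'+\beta_m)\Bigr)R_{\Lambda_S,N}(z,X),
\end{equation*}
so that collecting terms yields exactly $H_{z'}(X\Delta^z)=\sum_{k=0}^N\binom{z}{k}D_k(z',X)\Delta^{z-k}+R_N(z,z',X)$. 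Since each $C_{k,\Lambda_S}(X)$ lies in ${\cal D}$ and each scalar factor is a polynomial in $z'$ of degree $\le p$, we get $D_k(z',X)\in{\cal D}_p[z']$; point~(1) follows from the order bounds on $C_{k,\Lambda_S}(X)$ noted above; point~(2) follows from (1) by multiplying by $\Delta^{z-k}\in Op^{r({\rm Re}(z)-k)}$ and using ${\cal D}^q\subseteq Op^q$ together with $Op^aOp^b\subseteq Op^{a+b}$; point~(3) follows from the uniform bound on the remainders $R_{\Lambda_S,N}(z,X)$ and the fact that a polynomial scalar multiple of an $Op^t$ operator stays in $Op^t$.

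I do not expect any genuine obstacle here: the lemma is essentially a formal consequence of Lemma~$\ref{lem:Taylor HlambdaX}$ once the defining product is expanded. The only point requiring a little care is bookkeeping — making sure the $b(z')\mathrm{Id}_{Op}$ term is correctly absorbed (it exactly cancels the $|S|=\emptyset$ contribution because of the sign convention $b(z')=(-1)^{p+1}\prod_m(\alpha_m z'+\beta_m)$), and verifying that replacing $|S|M$ by $pM$ in all order estimates is legitimate, which holds because $M=\max_i(\mathrm{order}(P_i)+\mathrm{order}(Q_i))\ge 0$. The polynomial degree bound $D_k\in{\cal D}_p[z']$ is immediate since no factor contributes degree more than $p-|S|\le p$ in $z'$, and the coefficients $C_{k,\Lambda_S}(X)$ are $z'$-independent. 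Hence the slightly tedious but routine part is simply reorganizing the expansion; there is no analytic difficulty beyond what Lemma~$\ref{lem:Taylor HlambdaX}$ already provides.
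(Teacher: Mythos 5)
Your proposal follows essentially the same route as the paper: expand $H_{z'}$ as a signed sum over nonempty subsets $S\subseteq\llbracket 1,p\rrbracket$ of $\bigl(\prod_{m\notin S}(\alpha_m z'+\beta_m)\bigr)H_{\Lambda_S}$, apply Lemma~\ref{lem:Taylor HlambdaX} to each composite $H_{\Lambda_S}$, and collect coefficients, with the same order and polynomial-degree bookkeeping. The only blemish is the sign $(-1)^{p-|S|+1}$ in your displayed formulas for $D_k$ and $R_N$, which should be $(-1)^{p-|S|}$ (as your own cancellation of the empty-set term against $b(z')\,\mathrm{Id}$ shows); with that fixed, the identity and all stated bounds hold exactly as in the paper.
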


\begin{proof} Let $I$ be the set of subsets of $\llbracket 1,p \rrbracket$. For $u\in I$, denote by $\overline{u}$ the complementary subset $\llbracket 1,p\rrbracket \setminus u$. For $u\in I$, let $|u|$ be the length of $u$.   If $u \ne \emptyset$ we  write
 $H_{\Lambda_{u}}=\prod_{i=1}^{|u|} H_{a^{u_i},b^{u_i}}$  where the elements $u_i$ of $u$ are displayed in increasing order, and $b^u(z')=(-1)^{|u|}\prod_{i=1}^{|u|} (\alpha_{u_i}z'+\beta_{u_i})$.  By convention $H_{\Lambda_{\emptyset}}=Id_{{\cal D}}$, and $b^{\emptyset}(z)=1$. With these notations we can express $T_{z'}$ and $H_{z'}$ as follows:

 \begin{equation*} T_{z'}=\sum_{u\in I} b^{\overline{u}}(z')H^u~~~{\text and}~~~ H_{z'}=\sum_{u\in I,|u|\ge1} b^{\overline{u}}(z')H^u.\end{equation*}
For $u \in I, |u| \ge 1$,  $H_{\Lambda_{u}}$ is a $H_{\Lambda_p}$ type operator, with $p=|u|$.
  Lemma $\ref{lem:Taylor HlambdaX}$ implies

 \begin{equation*} H_{\Lambda_{u}}(X\Delta^z)=\sum_{k=0}^N \binom{z}{k}C_{k,\Lambda_{u}}(X)\Delta^{z-k}+R_{\Lambda_{u},N}(z,X). \end{equation*}

 \noindent We deduce

  \begin{equation*} H_{z'}(X\Delta^z)=\sum_{u\in I,|u|\ge1} b^{\overline{u}}(z')\left (\sum_{k=0}^N \binom{z}{k}C_{k,\Lambda_{u}}(X)\Delta^{z-k}+R_{\Lambda_{u},N}(z,X)\right), \end{equation*}

\noindent and by permuting the sums

 \begin{equation*} H_{z'}(X\Delta^z)=\sum_{k=0}^N\left (\sum_{u\in I,|u|\ge1} b^{\overline{u}}(z')C_{k,\Lambda_{u}}(X)\right)\Delta^{z-k}+\sum_{u\in I,|u|\ge1} b^{\overline{u}}(z')R_{\Lambda_{u},N}(z,X). \end{equation*}

\noindent Setting
 \begin{equation*} \forall  k\in \llbracket0,N\rrbracket,~~D_k(z',X)=\sum_{u\in I,|u|\ge1} b^{\overline{u}}(z')C_{k,\Lambda_{u}} \end{equation*}
 and
 \begin{equation*} R_{N}(z,z',X)=\sum_{u\in I,|u|\ge1} b^{\overline{u}}(z')R_{\Lambda_{u},N}(z,X). \end{equation*}

\noindent We obtain We obtain

 \begin{equation*} ~H_{z'}(\Delta^z)=\sum_{k=0}^N \binom{z}{k}D_k(z',X)\Delta^{z-k}+R_{N}(z,z',X). \end{equation*}

\noindent The computation of orders  is  a direct consequence of Lemma $\ref{lem:Taylor HlambdaX}$.
\end{proof}
\subsection{Meromorphic continuation}
$\label{suitered}$

\begin{df}$\label{suitereduc}$
Let $\left ({\cal D},\Delta,r\right)$ be an algebra of generalized differential operators. We keep the notations of Paragraph \ref{taylor-gen}. The sequence $(T_s)_{s \in \mathbb{N}}$ is called a  reduction sequence if there exist an integer $p\in \mathbb{N}^*$, a family  $\Lambda_p=(a^m,b^m)_{m\in \llbracket1,p\rrbracket}\in (\mathbb{R}^n \times \mathbb{R}^n)^p$ and a nonzero family $\Gamma_p=(\alpha_m,\beta_m)_{m\in \llbracket1,p\rrbracket} \in (\mathbb{R}^2)^p$ such that:

\begin{enumerate}[nolistsep]
\item For every $\displaystyle{s\in \mathbb{N}^*,~~ T_s=\prod_{m=1}^p \left(H_{a^m,b^m}-(\alpha_m s+ \beta_m)Id_{Op}  \right)}$.

\item For every $X \in {\cal D}^{q_X}$  and  every $i \in \mathbb{N}$,
$\displaystyle{ T_{q_X+ri}(X\Delta^i) \in {\cal D}[\Delta]^{q_X+ri-1}}$.

\end{enumerate}

\end{df}

To a reduction sequence $(T_s)_{s \in \mathbb{N}}$ as above, we associate a polynomial function $b$ and two families $(T_{z'})_{z' \in \mathbb{C}}$ and $(H_{z'})_{z' \in \mathbb{C}}$ of operators defined by:

 \begin{equation*}
 \begin{split}
 \forall z' \in \mathbb{C},~ &b(z')=(-1)^{p+1}\prod_{m=1}^p (\alpha_m z'+ \beta_m),\\[-2pt]
\label{defHz} \forall z' \in \mathbb{C},~&T_{z'}=\prod_{m=1}^p  \left (H_{a^m,b^m}-(\alpha_m z'+ \beta_m)Id_{Op} \right)\\[-2pt]
 \forall z' \in \mathbb{C},~&H_{z'}=T_{z'}+b(z')Id_{Op}
 \end{split}
 \end{equation*}

 The next theorem is a crucial point. We shall extend the reduction from a sequence to a family indexed by $\mathbb{C}$.

\begin{thm}\label{lem:Reductionmero} Let $\left ({\cal D},\Delta,r\right)$ be an algebra of generalized differential operators. Let $(T_s)_{s \in \mathbb{N}}$ be a  reduction sequence, and $(H_{z'})_{z' \in \mathbb{C}}$ defined as in the paragraph $\eqref{defHz}$. For every  $X \in {\cal D}^{q_X}$ we have
\begin{equation*} \forall z \in \mathbb{C}, ~~H_{rz+q_X}(X\Delta^z)=b(rz+q_X)X\Delta^z+ R_z,
\end{equation*}
with $(R_z)_{z\in \mathbb{C}}\in{\rm Hol}(q-1)$.
\end{thm}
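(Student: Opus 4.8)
The plan is to interpolate the algebraic reduction provided by condition (2) of Definition~\ref{suitereduc} from the discrete set of exponents $\{q_X+ri : i\in\mathbb{N}\}$ to all of $\mathbb{C}$, using the generalized Taylor expansion of Lemma~\ref{lem:Taylor Hzx} as the analytic bridge. First I would fix $X\in{\cal D}^{q_X}$ and apply Lemma~\ref{lem:Taylor Hzx} to $H_{z'}(X\Delta^z)$ with $z'=rz+q_X$: for $N$ chosen large (relative to a frontier $a$ and the target type $q_X-1$) and ${\rm Re}(z)<a$, this yields
\begin{equation*}
H_{rz+q_X}(X\Delta^z)=\sum_{k=0}^N\binom{z}{k}D_k(rz+q_X,X)\Delta^{z-k}+R_N(z,rz+q_X,X),
\end{equation*}
where each $D_k(z',X)\in{\cal D}_p[z']$ with $D_k(rz+q_X,X)\in{\cal D}^{q_X+pM+(r-1)k}$, and the remainder lies in $Op^{r{\rm Re}(z)+q_X+pM-N-1}$, hence (after enlarging $N$) defines a holomorphic family of type $q_X-1$ on the chosen half-plane. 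So modulo a ${\rm Hol}(q_X-1)$ remainder, the whole content of the theorem is the claim that
\begin{equation*}
\sum_{k=0}^N\binom{z}{k}D_k(rz+q_X,X)\Delta^{z-k}\;-\;b(rz+q_X)X\Delta^z
\end{equation*}
is a holomorphic family of type $q_X-1$.

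The key step is to evaluate at the integer points $z=i\in\mathbb{N}$ and the auxiliary variable at $z'=q_X+ri=rz+q_X$, i.e. to look at $H_{q_X+ri}(X\Delta^i)$. Here condition (2) of the reduction sequence says precisely $T_{q_X+ri}(X\Delta^i)\in{\cal D}[\Delta]^{q_X+ri-1}$, and since $H_{z'}=T_{z'}+b(z')\,{\rm Id}$ this means $H_{q_X+ri}(X\Delta^i)-b(q_X+ri)X\Delta^i\in{\cal D}[\Delta]^{q_X+ri-1}$. On the other hand, evaluating the Taylor expansion above at $z=i$ (where the integral remainder $R_N$ and all but finitely many terms collapse, since $\binom{i}{k}=0$ for $k>i$) expresses $H_{q_X+ri}(X\Delta^i)$ as an \emph{exact} finite sum $\sum_{k=0}^{i}\binom{i}{k}D_k(q_X+ri,X)\Delta^{i-k}$ plus a term of strictly lower order. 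Comparing the two expressions and using the uniqueness of the representation of an element of ${\cal D}[\Delta]$ as a polynomial in $\Delta$ with ${\cal D}$-coefficients, I would extract that the top-order part, namely $D_0(q_X+ri,X)-b(q_X+ri)X$, has order $\le q_X+r(r-1)\cdot 0$... more precisely it must be of order strictly less than what the filtration degree of the leading term would otherwise force, for every $i\in\mathbb{N}$. Since $D_0(z',X)-b(z')X$ is polynomial in $z'$ of bounded degree $p$ and its "leading behavior" (as an element of the associated graded ${\cal D}^{q_X+pM}/{\cal D}^{q_X+pM-1}$, or whatever the relevant top filtration piece is) vanishes at the infinitely many points $z'=q_X+ri$, that leading behavior is the zero polynomial; hence $D_0(z',X)-b(z')X\in{\cal D}^{q_X+pM-1}$ as a polynomial identity in $z'$, and similarly the leading filtration pieces of $D_k(z',X)$ for $k\ge 1$ are forced down.

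Iterating this descent through the filtration $q_X+pM,\,q_X+pM-1,\dots$ — at each level using that a polynomial in $z'$ vanishing (in the relevant graded quotient) on the arithmetic progression $q_X+r\mathbb{N}$ is identically zero — I would conclude, after finitely many steps, that $D_0(z',X)-b(z')X$ and each lower-$k$ coefficient drop far enough in the filtration that every term $\binom{z}{k}D_k(rz+q_X,X)\Delta^{z-k}$ other than the single term $b(rz+q_X)X\Delta^z$ is, as a family in $z$, an elementary holomorphic family of type $q_X-1$ (recall $D_k(rz+q_X,X)\Delta^{z-k}$ already has analytic order $r{\rm Re}(z)+q_X+pM-k$, so the gain in the algebraic filtration degree translates directly into a gain in type). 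Summing these and folding in the already-controlled $R_N$, the right-hand side minus $b(rz+q_X)X\Delta^z$ is in ${\rm Hol}(q_X-1)$ on ${\rm Re}(z)<a$, and since $a$ was arbitrary and the decomposition is compatible across half-planes, $(R_z)_{z\in\mathbb{C}}\in{\rm Hol}(q_X-1)$, which is the assertion.

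The main obstacle I anticipate is the bookkeeping in the descent: one must carefully track how the polynomial-in-$z'$ nature of the $D_k$, the filtration degree, and the analytic order interact, and in particular justify rigorously the step "a polynomial in $z'$ of bounded degree whose image in a fixed graded quotient vanishes on $q_X+r\mathbb{N}$ is zero there" — this is where condition~(2) being imposed for \emph{all} $i\in\mathbb{N}$ (an infinite arithmetic progression) rather than finitely many is essential, and where the hypothesis that $\Gamma_p$ is nonzero (so $b\not\equiv 0$ and the exponents $\alpha_m z'+\beta_m$ are genuine) enters. A secondary technical point is ensuring the passage from "exact identity at integer $z=i$" back to "identity of holomorphic families in $z$" is legitimate, which relies on both sides being holomorphic and agreeing on $\mathbb{N}$ together with the explicit polynomial/binomial form of all coefficients, so that analytic continuation applies.
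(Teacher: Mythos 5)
Your opening move is the same as the paper's (expand $H_{rz+q_X}(X\Delta^z)$ via Lemma \ref{lem:Taylor Hzx} and absorb $R_N$ into the remainder), and you correctly identify that the only input available is condition (2) of Definition \ref{suitereduc} at the integer points $z=i$. But the core of your argument has a genuine gap. First, you invoke ``uniqueness of the representation of an element of ${\cal D}[\Delta]$ as a polynomial in $\Delta$ with ${\cal D}$-coefficients'': this is false in general. The paper explicitly allows $\Delta\in{\cal D}$ (in which case ${\cal D}[\Delta]={\cal D}$ and such representations are wildly non-unique), and this is exactly the situation in the application of Section 2, where $\Delta_1\in{\cal D}(\lig_{\alpha,{\cal I}})$. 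Second, even granting some separation device, condition (2) only controls the \emph{total} sum $H_{ri+q_X}(X\Delta^i)-b(ri+q_X)X\Delta^i\in{\cal D}[\Delta]^{q_X+ri-1}$ at each integer $i$; it does not force the individual coefficients $D_k(z',X)$ (or $D_0(z',X)-b(z')X$) to drop in the filtration, and the per-term statement you aim for --- that each $\binom{z}{k}D_k(rz+q_X,X)\Delta^{z-k}$, $k\ge 1$, is separately of type $q_X-1$ --- is much stronger than the theorem and is not what the hypothesis delivers (in the motivating examples the reduction comes precisely from cancellations \emph{between} the terms, e.g.\ via $\sum_i(a_i+b_i)[P_i,Q_i]=\rho\,{\rm Id}$). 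So the ``descent through the graded quotients on the arithmetic progression $q_X+r\mathbb{N}$'' never gets off the ground: the vanishing in the graded quotient that would start it is exactly what you cannot extract.

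The paper avoids coefficient separation altogether by interpolating in the variable $z$ rather than in $z'$: each $\binom{z}{k}D_k(X,rz+q_X)$ is a polynomial in $z$ of degree at most $N+p-1$, so with the Lagrange polynomials $L_i$ at the nodes $i\in\llbracket 0,N+p-1\rrbracket$ one rewrites the whole truncated sum as
\begin{equation*}
\sum_{i=0}^{N+p-1} L_i(z)\,H_{ri+q_X}(X\Delta^i)\,\Delta^{z-i}
\;-\;\sum_{i=0}^{N+p-1}\sum_{k=N+1}^{N+p-1}\binom{i}{k}L_i(z)\,D_k(X,ri+q_X)\Delta^{z-k},
\end{equation*}
using that the Taylor expansion is exact at integer exponents ($\binom{i}{k}=0$ for $k>i$). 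Condition (2) is then applied verbatim to each genuine integer evaluation $H_{ri+q_X}(X\Delta^i)=b(ri+q_X)X\Delta^i+T_{ri+q_X}(X\Delta^i)$, the terms $T_{ri+q_X}(X\Delta^i)\Delta^{z-i}$ contribute a ${\rm Hol}(q_X-1)$ family, the correction sum is of type $q_X-1$ once $N\ge Mp$, and the main term is recovered from $\sum_i b(ri+q_X)L_i(z)=b(rz+q_X)$ because $b(rz+q_X)$ has degree $p\le N+p-1$. If you replace your graded-descent step by this interpolation identity (which is also the rigorous version of your loose ``agreeing on $\mathbb{N}$ plus polynomial structure'' remark --- finitely many nodes suffice precisely because of the degree bound), your outline becomes the paper's proof.
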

\begin{proof} Let $X\in {\cal D}^{q_X}$, $z \in \mathbb{C}$ and $N\in \mathbb{N}^*$ such that $N+1>{\rm Re}(z)$. Applying Lemma $\ref{lem:Taylor Hzx}$, we get:
\begin{equation*} ~H_{rz+q_X}(X\Delta^z)=\sum_{k=0}^N \binom{z}{k}D_k(X,rz+q_X)\Delta^{z-k}+R_{N}(X,z).
\end{equation*}
For every $k \in  \llbracket 0,N \rrbracket$,  $ \binom{z}{k}D_k(X,rz+q_X)$ is a polynomial in $z$ of degree less than or equal to   $k+p-1$, hence  less than or equal to   $N+p-1$  with coefficients in  ${\cal D}$. Denote by $(L_i)_{i \in \llbracket 0,N+p-1 \rrbracket}$ the family of   Lagrange interpolating polynomials, that is, the polynomials defined by:
\begin{equation*} \forall(i,j) \in \llbracket 0,N+p-1 \rrbracket^2, L_i(j)=\delta_i^j.
\end{equation*}
\noindent For every $k \in  \llbracket 0,N \rrbracket$ we have:
\begin{equation*} \forall z \in \mathbb{C}, \binom{z}{k}D_k(rz+q_X)=\sum_{i=0}^{N+p-1} \binom{i}{k}D_k(X,ri+q)L_i(z), \end{equation*}

\noindent and therefore

\begin{equation*} H_{rz+q_X}(X\Delta^z)=\sum_{k=0}^N \left ( \sum_{i=0}^{N+p-1} \binom{i}{k}D_k(X,ri+q_X)L_i(z) \right )\Delta^{z-k}+R_{N}(X,z). \end{equation*}

\noindent Permuting the sums we obtain

\begin{equation*} H_{rz+q_X}(X\Delta^z)=  \sum_{i=0}^{N+p-1}\left (\sum_{k=0}^N \binom{i}{k}D_k(X,ri+q_X)\Delta^{i-k}\right )L_i(z) \Delta^{z-i}+R_{N}(X,z), \end{equation*}

\noindent and so

\begin{equation*}
\begin{split}
H_{rz+q_X}(X\Delta^z)&=\sum_{i=0}^{N+p-1}\left (\sum_{k=0}^{N+p-1} \binom{i}{k}D_k(X,ri+q_X)\Delta^{i-k}\right )L_i(z) \Delta^{z-i} \\[-6pt]
                     &-\sum_{i=0}^{N+p-1}\left (\sum_{k=N+1}^{N+p-1} \binom{i}{k}D_k(X,ri+q_X)\Delta^{i-k}\right )L_i(z) \Delta^{z-i}     \\[-6pt]
                     &+R_{N}(X,z).                             \\
\end{split}
\end{equation*}

\noindent For every $i\in \llbracket 0,N+p-1 \rrbracket$ the following  formulas hold:

\begin{equation*} ~H_{ri+q_X}(X\Delta^i)=\sum_{k=0}^{i} \binom{i}{k}D_k(X,ri+q_X)\Delta^{i-k}=\sum_{k=0}^{N+p-1} \binom{i}{k}D_k(X,ri+q_X)\Delta^{i-k}. \end{equation*}

\noindent So we have

\begin{equation*}
\begin{split}
H_{rz+q_X}(X\Delta^z)&=\sum_{i=0}^{N+p-1}H_{ri+q_X}(X\Delta^i)L_i(z) \Delta^{z-i} \\[-6pt]
                     &-\sum_{i=0}^{N+p-1}\left (\sum_{k=N+1}^{N+p-1} \binom{i}{k}D_k(X,ri+q_X)\Delta^{i-k}\right )L_i(z) \Delta^{z-i}                   \\[-6pt]
                     &+R_{N}(X,z).                             \\
                     \end{split}
\end{equation*}

\noindent By definition of the operators $H$ and $T$, we get for every $i\in \llbracket 0,N+p-1 \rrbracket$

\begin{equation*}  H_{ri+q_X}(X\Delta^i)=b(ri+q_X)X\Delta^i+T_{ri+q_X}(X\Delta^i), \end{equation*}

\noindent hence

\begin{equation*}
\begin{split}
H_{rz+q_X}(X\Delta^z)&=\left (\sum_{i=0}^{N+p-1}b_{ri+q_X}L_i(z)\right ) X\Delta^{z} \\[-6pt]
                     &+\sum_{i=0}^{N+p-1}L_i(z)T_{ri+q_X}(X\Delta^i) \Delta^{z-i} \\[-6pt]
                     &-\sum_{i=0}^{N+p-1}\sum_{k=N+1}^{N+p-1} \binom{i}{k}L_i(z) D_k(X,ri+q_X)\Delta^{z-k}\\[-6pt]
                     &+R_{N}(X,z).                             \\
\end{split}
\end{equation*}

By definition of a  reduction sequence, we get
 $T_{ri+q_X}(X\Delta^i)\in {\cal D}[\Delta]^{q_X+ri-1} $ for every $i\in \llbracket 0,N+p-1\rrbracket$. So we obtain  $$\left(\sum_{i=0}^{N+p-1}L_i(z)T_{2i+q_X}(X\Delta^i) \Delta^{z-i}\}\right)_{z\in \mathbb{C}}\in{\rm Hol}(q-1).$$

Let $N$ such that $N \ge Mp$, for every $i\in \llbracket 0,N+p-1 \rrbracket$ and every $k\in \llbracket N+1,N+p-1\rrbracket$, $\left(D_k(X,ri+q)\Delta^{z-k}\right)_{z\in \mathbb{C}}\in{\rm Hol}(q-1)$ from $\ref{lem:Taylor Hzx}$. As $b$ is a polynomial with degree $p$ such that $p \le N+p-1$, using Lagrange interpolation we get the formula   $\displaystyle{\sum_{i=0}^{N+p-1}b_{ri+q}L_i(z)=b(rz+q)}$.
 We have proved $ H_{rz+q}(X\Delta^z)=b(rz+q)X\Delta^z+ R_z$, where $(R_z)_{z\in \mathbb{C}}\in{\rm Hol}(q-1)$.
\end{proof}

\begin{cor}\label{cor:ext} Let $\left ({\cal D},\Delta,r\right)$ be an algebra of generalized differential operators. Let $q \in \mathbb{Z}$ and $X\in {\cal D}$.
\begin{enumerate}[nolistsep]
\item Let $(X\Delta^{z-l})_{z\in \mathbb{C}}$  be an elementary holomorphic family of type $q$ then
$$H_{rz+q}(X\Delta^{z-l})=b(rz+q)X\Delta^{z-l}+ S_z $$
where $(S_z)_{z\in \mathbb{C}}\in{\rm Hol}(q-1)$.
\item Let $(T_z)_{z\in \mathbb{C}}$ be a holomorphic family of type $q$ then $H_{rz+q}(T_z)=b(rz+q)T_z+ S_z $
where $(S_z)_{z\in \mathbb{C}}\in{\rm Hol}(q-1)$.
\end{enumerate}
\end{cor}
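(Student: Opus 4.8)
I would deduce both assertions from Theorem~\ref{lem:Reductionmero}: the first by a translation of the complex variable, the second by feeding the first into the defining Taylor expansions of a holomorphic family.

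For $(1)$, write the given elementary family of type $q$ as $X\Delta^{z-l}$, so that by Definition~\ref{def:holofamily} one has $order(X)\le rl+q$ (a possible polynomial prefactor is harmless and can be reinstated at the very end, since $H_{rz+q}$ is $\mathbb{C}$-linear in its argument and ${\rm Hol}(q-1)$ is stable under multiplication by polynomials in $z$). Apply Theorem~\ref{lem:Reductionmero} to $X$ with the order bound $q_X=rl+q$: for every $w\in\mathbb{C}$, $H_{rw+rl+q}(X\Delta^w)=b(rw+rl+q)X\Delta^w+R_w$ with $(R_w)_{w\in\mathbb{C}}\in{\rm Hol}(rl+q-1)$. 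Substituting $w=z-l$, so that $rw+rl+q=rz+q$, gives
\[
H_{rz+q}(X\Delta^{z-l})=b(rz+q)X\Delta^{z-l}+R_{z-l}.
\]
It then remains to invoke the elementary \emph{translation lemma}: if $l\in\mathbb{N}$ and $(U_w)_{w\in\mathbb{C}}\in{\rm Hol}(k)$, then $(U_{z-l})_{z\in\mathbb{C}}\in{\rm Hol}(k-rl)$. This is immediate from Definition~\ref{def:holofamily}, since an elementary family $p(w)Y\Delta^{w-n}$ with $Y\in{\cal D}^{rn+k}$ becomes $p(z-l)Y\Delta^{z-(n+l)}$ with $Y\in{\cal D}^{r(n+l)+(k-rl)}$, and a Taylor expansion of frontier $a$ and order $m$ for $(U_w)$ becomes one of frontier $a+l$ and order $m$ for $(U_{z-l})$. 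Applied with $k=rl+q-1$ it yields $(R_{z-l})_{z\in\mathbb{C}}\in{\rm Hol}(q-1)$, which proves $(1)$.

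For $(2)$, set $S_z:=H_{rz+q}(T_z)-b(rz+q)T_z$ and show $(S_z)_{z\in\mathbb{C}}\in{\rm Hol}(q-1)$ by producing, for each pair of reals $(a,m)$, a Taylor expansion of $(S_z)$ of frontier $a$ and order $m$ with elementary families of type $q-1$. Fix $(a,m)$, put $M=\max_{1\le i\le n}(order(P_i)+order(Q_i))$, and choose a Taylor expansion $T_z=T_z^1+\dots+T_z^\ell+\rho_z$ of $(T_z)_{z\in\mathbb{C}}$ of frontier $a$ and order $m-pM$, each $(T_z^j)$ elementary of type $q$ and $z\mapsto\rho_z$ holomorphic from ${\rm Re}(z)<a$ into ${\rm Op}^{m-pM}$. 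Using the linearity of $H_{rz+q}$, part $(1)$ applied to each $(T_z^j)$, and $\sum_j T_z^j=T_z-\rho_z$, one gets on ${\rm Re}(z)<a$
\[
S_z=\sum_{j=1}^\ell S_z^j+H_{rz+q}(\rho_z)-b(rz+q)\rho_z,
\]
with each $(S_z^j)_{z\in\mathbb{C}}\in{\rm Hol}(q-1)$. Now $\sum_j S_z^j\in{\rm Hol}(q-1)$, hence admits a Taylor expansion of frontier $a$ and order $m$; moreover $H_{rz+q}$ raises analytic order by at most $pM$ and depends polynomially on $z$ (being a scalar-polynomial combination of the compositions $H_{\Lambda_u}$, cf. Lemma~\ref{lem:Taylor HlambdaX} and the remarks around it), so $z\mapsto H_{rz+q}(\rho_z)$ is holomorphic from ${\rm Re}(z)<a$ into ${\rm Op}^{(m-pM)+pM}={\rm Op}^m$, while $z\mapsto b(rz+q)\rho_z$ is holomorphic into ${\rm Op}^{m-pM}\subseteq{\rm Op}^m$. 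Collecting the elementary families of type $q-1$ and throwing everything else into the remainder gives the required Taylor expansion of $(S_z)$; since $(a,m)$ was arbitrary, $(S_z)_{z\in\mathbb{C}}\in{\rm Hol}(q-1)$, which is $(2)$.

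The only genuinely delicate point is the order bookkeeping in $(2)$: the auxiliary Taylor expansion of $(T_z)$ must be taken precisely to order $m-pM$ so that the ``error'' term $H_{rz+q}(\rho_z)$ lands in ${\rm Op}^m$ and is absorbed into the remainder. Everything else is formal — part $(1)$ is Theorem~\ref{lem:Reductionmero} reindexed through the translation lemma, and part $(2)$ is its propagation from elementary to arbitrary holomorphic families.
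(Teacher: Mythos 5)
Your proof is correct and takes essentially the same route as the paper: part (1) is Theorem \ref{lem:Reductionmero} applied at $z-l$ with $q_X=rl+q$, and part (2) propagates it by linearity through a Taylor expansion of $(T_z)$ taken to order $m-pM$, with $H_{rz+q}(\rho_z)$ absorbed into ${\rm Op}^m$ exactly as in the paper. Your explicit translation lemma (re-indexing $w=z-l$ lowers the type by $rl$) merely spells out a step the paper leaves implicit.
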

\begin{proof}
\begin{enumerate}
\item We have $X\in {\cal D}^{ rl+k}$, so:
$$H_{rz+q}(X\Delta^{z-l})=H_{r(z-l)+rl+k}(X\Delta^{z-l})=b(r(z-l)+rl+q)X\Delta^{z-l}+R_z=b(rz+q)X\Delta^{z-l}+ S_z,$$
with $(S_z)_{z\in \mathbb{C}}\in{\rm Hol}(q-1)$.
\item   Let $(T_z)_{z\in \mathbb{C}}$ be a holomorphic family of type $q$. Let $a \in \mathbb{R}$ and $m\in \mathbb{R}$, we can write $T_z=T_z^1+\dots+T_z^q+R_z$,  where $T_z^j$ is an elementary holomorphic family of type $q$ for every $1\le j \le q$ and the remainder  $z\mapsto R_z$  is holomorphic from the half-plane ${\rm Re}(z)<a$ to $Op^{m-Mp}$. For every $z \in \mathbb{C}$, by linearity $H_{rz+q}(T_z)=H_{rz+q}(T_z^1)+\dots+H_{rz+q}(T_z^q)+H_{rz+q}(R_z)$.
    For each $1 \le j \le q, H_{rz+q}(T_z)=b(rz+q)T_z^j+ S_z^j$
where $(S_z^j)_{z\in \mathbb{C}}\in{\rm Hol}(q-1)$ by the previous point. For every $z\in \mathbb{C}$ and every $t\in \mathbb{R}$, the operator $H_{rz+q}$ has polynomial coefficients in $z$ and maps $Op^{t}$ to $Op^{t+Mp}$, thus the remainder $z \mapsto H_{rz+q}(R_z)$ is holomorphic from the half-plane ${\rm Re}(z)<a$ to $Op^{m}$. Hence we have proved that $H_{rz+q}(T_z)=b(rz+q)T_z+ S_z $, where $(S_z)_{z\in \mathbb{C}}\in{\rm Hol}(q-1)$.
\end{enumerate}
\end{proof}

\begin{thm}\label{thetheorem} Let $\left ({\cal D},\Delta,r\right)$ be an algebra of  generalized differential operators, and suppose
that there exists a  reduction sequence. Denote by $b$ its associated   polynomial function and by ${\rm Rac(b)}$ its set of complex roots. We suppose that $\Delta^{-\frac{1}{r}} \in {\cal L}^p(\Hi)$ for some real $p \ge 1$. Let  $X \in {\cal D}^{q_X}$. The zeta spectral function
\begin{equation*} \application{\zeta_{X,\Delta}}{\mathbb{C}_{<-\frac{p+q_X}{r}}}{\mathbb{C}}{z}{{\rm Trace}(X\Delta^{z})} \end{equation*}
is holomorphic and admits a meromorphic continuation, also denoted by $\zeta_{X,\Delta}$, to all of $\mathbb{C}$ . The poles of $\zeta_{X,\Delta}$ are constrained in a finite union of arithmetic progressions:
\begin{equation*} {\rm Poles}(\zeta_{X,\Delta})\subset \underset{\alpha \in {\rm Rac}(b)}\bigcup R_{\alpha}, \end{equation*}
with $R_{\alpha}=\left \{\frac{\alpha-q_X}{r}+\frac{l}{r}|l \in \mathbb{Z}~{\text and}~l\ge-p-\alpha\right \}$. Let $\omega \in {\rm Poles}(\zeta_{X,\Delta})$, denoting $m(\omega)$ his pole multiplicity order, we have:
\begin{equation*} m(\omega) \le \sum_{\alpha \in {\rm Root(b)}}m(\alpha)\mathbf{1}_{R_{\alpha}}(\omega),
\end{equation*}
with $m(\alpha)$ denoting the multiplicity order of the root $\alpha$.
\end{thm}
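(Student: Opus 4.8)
The plan is to turn the algebraic reduction of Theorem \ref{lem:Reductionmero} into a functional equation for $\zeta_{X,\Delta}$ and then bootstrap on the type. As a preliminary, $(X\Delta^z)_{z\in\mathbb{C}}$ is an elementary holomorphic family of type $q_X$ (take the polynomial $1$ and $n=0$ in Definition \ref{def:holofamily}), so Proposition \ref{holozeta2} already gives that $\zeta_{X,\Delta}$ is holomorphic on $\mathbb{C}_{<\frac{-p-q_X}{r}}$. Theorem \ref{lem:Reductionmero} then provides, for every $z\in\mathbb{C}$,
\begin{equation*} H_{rz+q_X}(X\Delta^z)=b(rz+q_X)\,X\Delta^z+R_z,\qquad (R_z)_{z\in\mathbb{C}}\in{\rm Hol}(q_X-1). \end{equation*}

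The decisive point is that $z\mapsto{\rm Trace}\big(H_{rz+q_X}(X\Delta^z)\big)$ vanishes identically. Indeed $H_{z'}$ is a linear combination, with polynomial coefficients in $z'$, of compositions $H^u$ each of which has some operator $H_{a,b}$ as its outermost factor; since holomorphic families are stable under left multiplication by ${\cal D}$ and under the maps $H_{a,b}$, one can write $H^u(X\Delta^z)=H_{a,b}(V_z)=\sum_i a_i[-Q_i,P_iV_z]+\sum_i b_i[P_i,Q_iV_z]$ for a suitable holomorphic family $(V_z)$. For each commutator I would take a sufficiently deep Taylor expansion of $P_iV_z$ (resp.\ $Q_iV_z$): the elementary pieces reduce the trace to terms ${\rm Trace}\big([-Q_i,Y\Delta^{-(n-z)}{\rm Id}]\big)$ which vanish identically by Proposition \ref{comutnulle}, while the trace-class remainder contributes a trace of a commutator of an element of ${\cal D}$ with a smoothing operator, which also vanishes (again reducible to Proposition \ref{comutnulle}). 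Taking traces in the reduction identity on a left half-plane where all operators involved are trace class, and propagating by analytic continuation, one obtains
\begin{equation*} b(rz+q_X)\,{\rm Trace}(X\Delta^z)=-{\rm Trace}(R_z). \end{equation*}

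Since $(R_z)\in{\rm Hol}(q_X-1)$, Proposition \ref{holozeta2} makes ${\rm Trace}(R_z)$ holomorphic on the strictly larger half-plane $\mathbb{C}_{<\frac{-p-q_X+1}{r}}$, while $z\mapsto b(rz+q_X)$ is a nonzero polynomial; hence $\zeta_{X,\Delta}$ continues meromorphically to $\mathbb{C}_{<\frac{-p-q_X+1}{r}}$ with poles only at the zeros of $z\mapsto b(rz+q_X)$. Iterating, I would apply Corollary \ref{cor:ext}(2) to $(R_z)\in{\rm Hol}(q_X-1)$ and take traces the same way to obtain $b(rz+q_X-1){\rm Trace}(R_z)=-{\rm Trace}(S_z)$ with $(S_z)\in{\rm Hol}(q_X-2)$, and so on; after $l$ steps
\begin{equation*} \zeta_{X,\Delta}(z)=\frac{(-1)^l\,{\rm Trace}(R_z^{(l)})}{\prod_{j=0}^{l-1}b(rz+q_X-j)},\qquad (R_z^{(l)})\in{\rm Hol}(q_X-l), \end{equation*}
with numerator holomorphic on $\mathbb{C}_{<\frac{-p-q_X+l}{r}}$. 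Letting $l\to\infty$ exhausts $\mathbb{C}$, the successive expressions agreeing on overlaps, which gives the meromorphic continuation to all of $\mathbb{C}$. A pole $\omega$ must then be a zero of some $z\mapsto b(rz+q_X-j)$ with $j\in\mathbb{N}$, i.e.\ $r\omega+q_X-j=\alpha$ for some $\alpha\in{\rm Rac}(b)$, so $\omega=\frac{\alpha-q_X}{r}+\frac{j}{r}$; and since $\zeta_{X,\Delta}$ is holomorphic on $\mathbb{C}_{<\frac{-p-q_X}{r}}$ one must have $j\ge-p-\alpha$, that is $\omega\in R_\alpha$. Finally the order of the pole at $\omega$ is at most the order of the zero of the denominator there, namely $\sum_{\alpha\in{\rm Rac}(b)}m(\alpha)\,\mathbf{1}_{R_\alpha}(\omega)$, since for each fixed $\alpha$ at most one $j$ satisfies $r\omega+q_X-j=\alpha$.

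I expect the main obstacle to be the identity ${\rm Trace}(H_{rz+q_X}(X\Delta^z))=0$: one has to check that the commutator form of the $H_{a,b}$ is preserved through the compositions defining $H_{z'}$, control the Taylor-expansion remainders arising in $P_iV_z$ and $Q_iV_z$, and make sure the identity is first established on a half-plane of trace-class operators and only then propagated by analytic continuation. The remaining bookkeeping of analytic and differential orders has essentially already been carried out in Lemmas \ref{lem:Taylor Hab X}--\ref{lem:Taylor Hzx} and Theorem \ref{lem:Reductionmero}.
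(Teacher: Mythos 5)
Your proposal follows essentially the same route as the paper: holomorphy on $\mathbb{C}_{<-\frac{p+q_X}{r}}$ via Proposition \ref{holozeta2}, the reduction identity from Theorem \ref{lem:Reductionmero} and Corollary \ref{cor:ext}, vanishing of ${\rm Trace}(H_{rz+q_X}(T_z))$ through the commutator structure and Proposition \ref{comutnulle}, and iteration to obtain $\prod_{j}b(rz+q_X-j)\,\zeta_{X,\Delta}(z)={\rm Trace}(R_z^{(l)})$, from which the continuation, the pole locations and the multiplicity bound are read off exactly as in the paper. Your extra discussion of how to justify the vanishing trace (splitting into elementary families plus a deeply smoothing remainder) is a slightly more detailed version of what the paper leaves implicit, not a different method.
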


\begin{proof} Let  $X \in {\cal D}^{q_X}$, $(T_z)$ a holomorphic family of type $q_X$ and $p$   a real greater than 1 such that  $\Delta^{-\frac{1}{r}} \in {\cal L}^p(\Hi)$. From Proposition $\ref{holozeta2}$, the application $z \mapsto {\rm Trace}(T_z)$ is holomorphic on the open half-plane $\mathbb{C}_{-\frac{p+q_X}{r}}$. From  Corollary $\ref{cor:ext}$ ,  we have $H_{rz+q_X}(T_z)=b(rz+q_X)T_z+S_z$, where $(S_z)\in{\rm Hol}(q_X-1)$. For ${\rm Re}(z)<<0$, the operators $H_{rz+q_X}(T_z)$,$T_z$ and $S_z$ are trace class and moreover ${\rm Trace}(H_{rz+q_X}(T_z))=0$ from the Proposition $\ref{comutnulle}$. For ${\rm Re}(z)<<0$,   we obtain $b(rz+q_X){\rm Trace}(T_z)={\rm Trace} (-S_z)$. For $l \in \mathbb{N}$, repeating $l+1$ times the above operation we can write
 $$b(rz+q_X-l)\dots b(rz+q_X-1)b(rz+q_X){\rm Trace}(T_z)={\rm Trace} (U_z),$$ with $(U_z)\in{\rm Hol}(q_X-l-1)$ . The function $z \mapsto {\rm Trace}(U_z)$ is holomorphic on the open half-plane $\mathbb{C}_{-\frac{p+q}{r}+\frac{l+1}{r}}$  as consequence of Proposition $\ref{holozeta2}$, we conclude that $z\mapsto {\rm Trace}(T_z)$ extends to a meromorphic function on  $\mathbb{C}$. The localization and upper bound of multiplicity for poles are immediate consequences of the expression $\displaystyle{{\rm Trace}(T_z)=\frac{1}{b(rz+q_X-l)\dots b(rz+q_X-1)b(rz+q_X)}{\rm Trace} (U_z)}$.
\end{proof}

\section{Application to a family of nilpotent Lie algebras}\label{sect:deux}
\subsection{The sets $O(I)$ and $T(I)$}\label{sec1}

The notations and results of this section come in a large part from the article \cite{grobner5}.
Let $\lig$ be a Lie algebra with basis $(X_1,\dots,X_n)$. Denote by ${\cal S}$ the set of monomial ${\cal S}=\{X_1^{\alpha_1}\dots X_n^{\alpha_n}~|~(\alpha_1,\dots,\alpha_n)\in \mathbb{N}^n \}$.  Theorem of Poincaré-Birkhoff-Witt  states that ${\cal S}$ is a basis of the complex vector space ${\cal U}(\lig)$. The degree of $U= X_1^{\alpha_1}\dots X_n^{\alpha_n}\in {\cal S} $  is defined by
$\deg(U)=\alpha_1+\dots+\alpha_n$. We denote <  the total degree ordering on ${\cal S}$, which is defined by:$\label{degtot}$
$$U_1:= X_1^{\alpha_1}\dots X_n^{\alpha_n} <X_1^{\beta_1}\dots X_n^{\beta_n}=:U_2$$
if and only if $\left [\deg(U_1)<\deg(U_2) \right ]$ or $\left [\deg(U_1)=\deg(U_2)~\hbox{ and}~~\exists j\in \llbracket 1,n \rrbracket~~\hbox{ such~ that} ~\alpha_i=\beta_i~~ \hbox{ if}~~i<j,~\hbox{ and}~~ \alpha_j<\beta_j \right ]$.

 The total degree relation on ${\cal S}$ is a good ordering. Let $U \in{\cal U}(\lig)$ be  a non-zero element. There exists an unique decomposition as follows:
$$U=\sum_{i=1}^s c_iU_i,~~s\in \mathbb{N}^*,~c_i \in \mathbb{C}^*, U_i \in S~{\text and}~U_1>U_2>\dots>U_s.$$

So we define $T(U)=U_1$  the maximal term  and $cd(U)=c_1$ the leading coefficient of $U$. Let $I$ be an ideal of the universal enveloping algebra ${\cal U}(\lig)$. We denote
 \begin{equation*}
 \begin{split}
 T(I)&=\{T(U)~|~U\in I \}, \\[-2pt]
 O(I)&={\cal S}\setminus T(I).
\end{split}
\end{equation*}
We also consider on ${\cal S}$ a partial ordering $\preceq$, defined by

 $$X_1^{\alpha_1}\dots X_n^{\alpha_n} \preceq X_1^{\beta_1}\dots X_n^{\beta_n}~~\hbox{ if and only  if}~~ \forall i \in \llbracket 1,n \rrbracket,~\alpha_i \le \beta_i.$$

\begin{lem}$\label{lemsecfini}$ Let $I$ be an ideal of  ${\cal U}(\lig)$.
 The set $T(I)$ is an upper set for the ordering $\preceq$. Namely, for every $V \in T(I)$ and $W \in {\cal S}$ if $V \preceq W$ then $W \in T(I)$.
\end{lem}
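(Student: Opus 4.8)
The statement to prove is Lemma~\ref{lemsecfini}: $T(I)$ is an upper set for the divisibility partial order $\preceq$ on the PBW monomials $\mathcal{S}$. The plan is to take $V \in T(I)$ and $W \in \mathcal{S}$ with $V \preceq W$, and exhibit an element of $I$ whose leading term (for the total degree ordering) is exactly $W$. By definition of $T(I)$, there is some $U \in I$ with $T(U) = V$; writing $V = X_1^{\alpha_1}\cdots X_n^{\alpha_n}$ and $W = X_1^{\beta_1}\cdots X_n^{\beta_n}$ with $\alpha_i \le \beta_i$ for all $i$, I would like to multiply $U$ on the left and/or right by the ``quotient monomial'' $X_1^{\beta_1-\alpha_1}\cdots X_n^{\beta_n-\alpha_n}$ and argue that the leading term of the product is $W$.

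\textbf{Key steps.} First I would reduce to the case of multiplying by a single generator $X_j$: it suffices to show that if $V \in T(I)$ then $X_j V$ (meaning the PBW-monomial obtained by incrementing the exponent of $X_j$) lies in $T(I)$ for each $j$, since the general case follows by iterating this $\sum_i(\beta_i - \alpha_i)$ times. So fix $U \in I$ with $T(U) = V$ and consider $V' := X_j \cdot V$ as a monomial; I would examine the product $X_j \, U \in I$. The point is that the map $U' \mapsto X_j U'$, when we reexpress $X_j U'$ in the PBW basis, sends the monomial $V = T(U)$ to a sum whose highest term (in the total degree ordering) is precisely the monomial $X_j V$: indeed $X_j X_1^{\alpha_1}\cdots X_n^{\alpha_n}$ equals $X_1^{\alpha_1}\cdots X_j^{\alpha_j+1}\cdots X_n^{\alpha_n}$ plus lower-degree correction terms coming from the commutators $[X_j, X_i]$, each of which strictly decreases total degree (since $\lig$ is a Lie algebra, $[X_j,X_i] \in \lig$ has degree $1$, so moving $X_j$ past one $X_i$ produces a term of degree reduced by $1$). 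All other monomials $U_2 > \cdots$ appearing in $U$ have total degree $\le \deg(V)$, hence after left-multiplication by $X_j$ produce terms of total degree $\le \deg(V)+1 = \deg(X_j V)$, and among the degree-$(\deg(V)+1)$ terms the leading one in the ordering $<$ is governed by $V$ alone (here I use that $<$ refines total degree and is compatible with the monomial structure, so $T(X_j V) \succ T(X_j U_k)$ for $k \ge 2$ when $\deg U_k = \deg V$, and is strictly larger in degree otherwise). Therefore $T(X_j U) = X_j V = V'$, so $V' \in T(I)$.

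\textbf{Main obstacle.} The delicate point is the bookkeeping in the previous step: verifying carefully that left-multiplication by $X_j$ does not create, among the top-degree part, a monomial that is larger than $X_j V$ in the total degree ordering $<$. This needs the fact that for PBW monomials of equal degree the ordering $<$ is preserved under multiplying both sides by the same $X_j$ and reordering, which in turn rests on the triangular nature of the commutator corrections (they are strictly degree-lowering) together with the lexicographic tie-break being monotone. One clean way to package this is to pass to the associated graded algebra: $\mathrm{gr}\,\mathcal{U}(\lig) \cong \mathbb{C}[X_1,\dots,X_n]$ is a polynomial ring, the symbol of $U$ is a homogeneous polynomial with leading monomial $V$ (for the induced monomial order), and the statement reduces to the classical fact that initial-term ideals (monomial ideals) in a polynomial ring are upward closed under divisibility --- which is immediate since if $V$ divides $W$ then $W = (W/V)\cdot V$ and the leading monomial of $(W/V)\cdot \mathrm{symbol}(U)$ is $W$. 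I would present the argument via this associated-graded reduction to keep the degree bookkeeping transparent.
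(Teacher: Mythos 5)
Your proposal is correct and follows essentially the same route as the paper: take $U\in I$ with $T(U)=V$, multiply by the quotient monomial $X_1^{\beta_1-\alpha_1}\cdots X_n^{\beta_n-\alpha_n}$ (using that $I$ is an ideal), and observe that the leading term of the product is $W$. The only difference is that the paper asserts $T\bigl(X_1^{\beta_1-\alpha_1}\cdots X_n^{\beta_n-\alpha_n}U\bigr)=W$ without comment, whereas you supply the justification (degree-lowering commutator corrections, monotonicity of the lexicographic tie-break, or equivalently passage to the associated graded polynomial algebra), which is a welcome but not essentially different elaboration.
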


\begin{proof} Let $V= X_1^{\alpha_1}\dots X_n^{\alpha_n}\in T(I)$ and $W=  X_1^{\beta_1}\dots X_n^{\beta_n}\in {\cal S} $. There exists $U \in I$ such that $T(U)=V$. If $V \preceq W$, then $X_1^{\beta_1-\alpha_1}\dots X_n^{\beta_n-\alpha_n} \in {\cal U}(\lig)$. As $I$ is an  ideal $X_1^{\beta_1-\alpha_1}\dots X_n^{\beta_n-\alpha_n}U \in I$, and $T(X_1^{\beta_1-\alpha_1}\dots X_n^{\beta_n-\alpha_n}U)=W$ then $W \in T(I)$.
\end{proof}

\noindent The following proposition corresponds to \cite[Theorem 1.3]{grobner5}.
\begin{pr} $\label{pr:OI}$ Let $I$ be an ideal of  ${\cal U}(\lig)$. We have the decomposition
 $${\cal U}(\lig)=I \oplus Vect(O(I))$$
So, for every $U\in {\cal U}(\lig)$ there exists one and only one $V=Can(U,I)\in Vect(O(I))$ such that $U-V \in I$.
\end{pr}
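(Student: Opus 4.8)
The plan is to prove the two assertions of the direct sum decomposition separately, namely that $I + \mathrm{Vect}(O(I)) = \mathcal{U}(\lig)$ and that $I \cap \mathrm{Vect}(O(I)) = \{0\}$. The uniqueness statement about $\mathrm{Can}(U,I)$ is then an immediate formal consequence of the decomposition, so the real content is these two facts. The essential input I would use is that the total degree ordering $<$ on $\mathcal{S}$ is a well-ordering (noted just before Lemma \ref{lemsecfini}), which legitimizes arguments by induction on the maximal term $T(U)$.

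For the sum being everything, I would argue by well-founded induction on $T(U)$ for $U \in \mathcal{U}(\lig)$ nonzero (the case $U = 0$ being trivial). Given nonzero $U$, look at its maximal term $T(U) = U_1$. If $U_1 \in O(I) = \mathcal{S} \setminus T(I)$, I cannot yet conclude, so the cleaner approach is: among all monomials appearing in $U$, strip off those lying in $T(I)$ one at a time. Concretely, if $T(U) \in T(I)$, pick $V \in I$ with $T(V) = T(U)$ and leading coefficient $cd(V)$; then $U' := U - \frac{cd(U)}{cd(V)} V$ lies in $U + I$ and has $T(U') < T(U)$ strictly (the maximal terms cancel). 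By the well-ordering, iterating this produces, after finitely many steps, an element $\widetilde{U} \in U + I$ all of whose monomials lie in $O(I)$ — i.e. $\widetilde{U} \in \mathrm{Vect}(O(I))$ — whence $U \in I + \mathrm{Vect}(O(I))$. This is the place where I expect the only real subtlety: one must make sure the process terminates, which is exactly why the well-ordering (every nonempty subset of $\mathcal{S}$ has a least element, equivalently there is no infinite strictly descending chain) is invoked; a naive "induction on degree" is not quite enough because subtracting $V$ can reintroduce lower-degree monomials, but it cannot reintroduce $T(U)$ or anything $\ge T(U)$, so the maximal term strictly decreases and the well-ordering closes the argument.

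For the intersection being zero, suppose $W \in I \cap \mathrm{Vect}(O(I))$ with $W \neq 0$. Then on one hand $T(W) \in T(I)$ by definition of $T(I)$ (since $W \in I$). On the other hand, $W$ is a nonzero linear combination of elements of $O(I)$, so its maximal term $T(W)$ is one of those basis monomials and hence $T(W) \in O(I) = \mathcal{S} \setminus T(I)$. This contradicts $T(W) \in T(I)$, so $W = 0$.

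Finally, for uniqueness: if $V_1, V_2 \in \mathrm{Vect}(O(I))$ both satisfy $U - V_i \in I$, then $V_1 - V_2 \in I \cap \mathrm{Vect}(O(I)) = \{0\}$, so $V_1 = V_2$; this defines $\mathrm{Can}(U,I)$ unambiguously. I would remark that the whole argument is the standard Gröbner-basis normal-form reduction, transported to the PBW setting of $\mathcal{U}(\lig)$, and the only genuinely non-formal ingredient is the well-ordering property of $<$ on $\mathcal{S}$ that guarantees termination of the reduction; everything else is bookkeeping with leading terms and leading coefficients.
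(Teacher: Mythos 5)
Your proposal is correct and follows essentially the same route as the paper: the intersection and uniqueness arguments are identical, and your iterative reduction with termination guaranteed by the well-ordering of $<$ on ${\cal S}$ is exactly the paper's normal-form algorithm (the paper just writes it out explicitly with accumulators $\Phi_i\in I$ and $H_i\in Vect(O(I))$, which is the concrete form of your ``set aside monomials in $O(I)$, reduce monomials in $T(I)$'' step). You also correctly pinpoint the one non-formal ingredient, namely that the strict decrease of the maximal term plus the well-ordering closes the termination argument.
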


\begin{proof} The two last items are clearly equivalent to the first one, so we prove $(1)$. If  $U\ne 0$ is an element of $I \cap Vect(O(I))$ then $T(U)$ belongs to $T(I) \cap O(I)$, this is a contradiction so $I \cap Vect(O(I))=\{0\}$. We shall prove that ${\cal U}(\lig)=I + Vect(O(I))$ with an algorithm.

\medskip
{\tt
$U_0:=U$, $\Phi_0:=0$, $H_0:=O$, $i:=0$.

~~While $U_i \ne 0$ do:

~~~~~If $T(U_i) \notin T(I)$ then

~~~~~~~~$\Phi_{i+1}:=\Phi_i, H_{i+1}:=H_i+lc(U_i)T(U_i), U_{i+1}:=U_i-lc(U_i)T(U_i)$

~~~~~Else

~~~~~~~~Choosing $G\in I$ such that $T(G)=T(U_i)$ and $lc(G)=1$

~~~~~~~~$\Phi_{i+1}:=\Phi_i+lc(U_i)G, H_{i+1}:=H_i, U_{i+1}:=U_i-lc(U_i)G$

~~$i:=i+1$

$\Phi:=\Phi_i, H=H_i$.
}
\medskip

 For every $i$, $\Phi_i \in I,~~H_i \in Vect(O(I))$ and $U_i+\Phi_i+H_i=f$ is a loop invariant. From a loop to the next $T(U_{i+1})<T(U_i)$,  as the ordering $\preceq $ is good  the algorithm finishes.
\end{proof}

\begin{pr} $\label{pr:OImin}$ Let $I$ be an ideal of ${\cal U}(\lig)$. If $U \in {\cal U}(\lig)$ and $\Phi \in I$ then $T(\Phi+Can(U,I))\ge T(Can(U,I))$.
\end{pr}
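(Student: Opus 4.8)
The plan is to argue by contradiction, using only the defining property of $Can(U,I)$ furnished by Proposition $\ref{pr:OI}$, namely that $Can(U,I)\in Vect(O(I))$, so that every monomial occurring in $Can(U,I)$ lies in $O(I)={\cal S}\setminus T(I)$.

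First I would dispose of the trivial case $Can(U,I)=0$, where the asserted inequality is vacuous. So set $V:=Can(U,I)$ and assume $V\ne 0$. I would then observe that $\Phi+V\ne 0$ as well: indeed $\Phi+V=0$ would give $\Phi=-V\in I\cap Vect(O(I))=\{0\}$ by Proposition $\ref{pr:OI}$, contradicting $V\ne 0$. Write $c_1=cd(V)\ne 0$, so that the maximal term $T(V)$ occurs in $V$ with nonzero coefficient $c_1$.

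Next I would suppose, toward a contradiction, that $T(\Phi+V)<T(V)$; the alternatives $T(\Phi+V)=T(V)$ and $T(\Phi+V)>T(V)$ already yield the claim. Since $T(\Phi+V)<T(V)$, every monomial occurring in $\Phi+V$ is $<T(V)$, and every monomial occurring in $V$ is $\le T(V)$; hence, writing $\Phi=(\Phi+V)-V$, no monomial strictly greater than $T(V)$ occurs in $\Phi$, while the coefficient of $T(V)$ in $\Phi$ equals $0-c_1=-c_1\ne 0$. Therefore $\Phi\ne 0$ and $T(\Phi)=T(V)$.

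Finally, since $\Phi\in I$ we have $T(\Phi)\in T(I)$ by definition of $T(I)$, whereas $T(V)\in O(I)={\cal S}\setminus T(I)$ because $V\in Vect(O(I))$; this contradicts $T(\Phi)=T(V)$. I do not anticipate a real obstacle here: the argument is short, and the only points needing a little care are the bookkeeping of which monomials may appear in $\Phi$ once the top term of $V$ is known to be cancelled, and the harmless handling of the degenerate case $Can(U,I)=0$.
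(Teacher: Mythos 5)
Your proof is correct and follows essentially the same route as the paper: assume $T(\Phi+Can(U,I))<T(Can(U,I))$, deduce that the top term of $Can(U,I)$ must be cancelled so that $T(\Phi)=T(Can(U,I))$ lies in $T(I)\cap O(I)$, which is impossible. Your extra care with the degenerate cases ($Can(U,I)=0$, $\Phi+Can(U,I)=0$) and the coefficient bookkeeping is a harmless elaboration of the paper's terser argument.
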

\begin{proof} Let $U \in {\cal U}(\lig)$. If there exists $\Phi \in I$ such that $T(\Phi+Can(U,I)) < T(Can(U,I))$ then $cd(\Phi)=-cd(Can(U,I))$ and
$T(\Phi)=T(Can(U,I))\ne 0 \in T(I) \cap O(I)$, this is a contradiction.
\end{proof}

\subsection{ The Lie algebras $\lig_{\alpha,{\cal I}}$}
$\label{liealpha}$

  Let $\gamma=(\gamma_i)_{1\le i\le n}\in \mathbb{N}^n$ and $\beta=(\beta_i)_{1\le i\le n}\in \mathbb{N}^n$. We define the order $\gamma \preceq \beta$ by  $ \gamma_i \le \beta_i$ for every $ i \in \llbracket 1,n \rrbracket$. Denote  $|\gamma|=\sum_{i=1}^n \gamma_i$,  $\gamma!=\prod_{i=1}^n \gamma_i!$ and if  $\gamma \preceq \beta$, $\displaystyle{\binom{\beta}{\gamma}=\frac{\beta!}{\gamma!(\beta-\gamma)!}}$. Let $i\in \llbracket 1,n \rrbracket$, denote $\delta^i=(\delta_k^i)_{1\le k\le n}$, where $\delta_k^i$ is the Kronecker symbol, the $n$-uple with all zero components  excepted the $i$-th equal to 1. We now define the class of Lie algebras $\lig_{\alpha,{\cal I}}$. Let $n\in \mathbb{N}^*$ and $\alpha=(\alpha_i)_{1\le i\le n}\in (\mathbb{N}^*)^n$. Let $p\in \mathbb{N}^*$ and ${\cal I}=\{I_j |1 \le j\le p\}$ a  partition of $\llbracket 1,n \rrbracket$ in $p$ subsets. By convention, the subsets $I_j$ are sorted according to their smallest element, that is $j <j'$ if and only if $\min(I_j)<\min(I_{j'})$. For $j \in \llbracket 1,p \rrbracket$, we denote $\displaystyle{A^j=\{ \beta \in \mathbb{N}^n| \beta \preceq \sum_{i\in I_j}\alpha_i\delta^i \}}$ and   $A^{\cal I}=\displaystyle{\bigcup_{j=1}^p A^j}$. The Lie algebra $\lig_{\alpha,{\cal I}}$ is defined by:

 \begin{equation*}\lig_{\alpha,{\cal I}}=\langle X_1,\dots,X_n,Y^{\beta}|~\beta \in A^{\cal I} \rangle_{\mathbb{C}-ev}, \end{equation*}

\noindent and the only non trivial relations are

 \begin{equation*}  \forall i \in \llbracket 1,n \rrbracket,~ \forall \beta \in A^{\cal I}, ~~ [X_i,Y^{\beta}]=Y^{\beta-\delta^i}~\text{if}~\beta_i\ge 1. \end{equation*}

\noindent We can easily prove that $\lig_{\alpha,{\cal I}}$ is a nilpotent Lie algebra. The family
$${\cal S}= \left\{X_1^{p_1}\dots X_n^{p_n} \prod_{\beta \in A^{\cal I}}(Y^{\beta})^{q_{\beta}}~|~p_i \in \mathbb{N}, ~q_{\beta}\in \mathbb{N} \right\}$$
is a  basis of $U(\lig_{\alpha,{\cal I}})$. This basis is ordered with the total degree ordering  $\eqref{degtot}$, here the lexical order is $X_1 <\dots<X_n<Y^{\beta}$ for every $\beta$, and the elements $Y^{\beta}$ are sorted with the natural lexical ordering of $n$-uples $\beta$ in $\mathbb{N}^n$.

\subsection{ Generators of the ideal $I(f)$}
$\label{sec3}$
Let  $\lig_{\alpha,{\cal I}}=\langle X_1,\dots,X_n,Y^{\beta}|~\beta \in A^{\cal I} \rangle$ be one of the nilpotent Lie algebras defined in Paragraph $\eqref{liealpha}$. We denote by $\rho$ the infinitesimal representation of $\lig_{\alpha,{\cal I}}$ associated to the linear functional $f=(Y^{(0,\dots,0)})^*$ and the polarisation $ \langle Y^{\beta}|~\beta \in A^{\cal I} \rangle  $ via the Kirillov correspondence. Let ${\cal P}(\mathbb{R}^n)$ be the  algebra of linear differential operators with polynomial coefficients on $\mathbb{R}^n$ acting on ${\cal C}_c^{\infty}(\mathbb{R}^n)$. For every $\gamma=(\gamma_i)_{1\le i\le n}\in \mathbb{N}^n$, set  $x^{\gamma}:=x_1^{\gamma_1}\dots x_n^{\gamma_n}$. We can give an explicit expression of $\rho$, as an algebra homomorphism from $\lig_{\alpha,{\cal I}}$ to ${\cal P}(\mathbb{R}^n)$ (see \cite[section 1]{Multifili1}):

\begin{equation} \forall k \in \llbracket 1,n \rrbracket, \rho(X_k)=-\frac{\partial}{\partial x_k}, ~~~~~~~~\forall \beta \in A^{\cal I}, \rho(Y^\beta)=i\frac{(-1)^{|\beta|}x^{\beta}}{\beta!}. \end{equation}

Here $\rho(Y^\beta)$ is the multiplication operator by $i\frac{(-1)^{|\beta|}x^{\beta}}{\beta!}$. The isotropic algebra of $\lig_{\alpha,{\cal I}}$ associated to $f$ is defined by
$$\lig_{\alpha,{\cal I}}^f=\left\{ X \in \lig_{\alpha,{\cal I}}| \forall Y \in \lig_{\alpha,{\cal I}}, f([X,Y])=0 \right\}.$$

\begin{lem}$\label{genIf}$ Let be a Lie algebra $\lig_{\alpha,{\cal I}}$, then
$$\lig_{\alpha,{\cal I}}^f= \left\langle Y^{(0,\dots,0)},Y^\beta ~|~\beta \in A^{\cal I},|\beta| \ge2 \right\rangle_{\mathbb{C}-ev}.$$ \end{lem}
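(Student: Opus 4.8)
The statement is a concrete computation of the isotropy subalgebra $\lig_{\alpha,\mathcal I}^f$ for the linear form $f = (Y^{(0,\dots,0)})^*$. The plan is to compute, for an arbitrary element $X \in \lig_{\alpha,\mathcal I}$, the value $f([X,Y])$ for all $Y$ in a basis, and to read off the kernel of the resulting bilinear form $B_f(X,Y) = f([X,Y])$. Write a general element as $X = \sum_k s_k X_k + \sum_{\beta \in A^{\mathcal I}} t_\beta Y^\beta$. Since $f$ is supported on the coordinate $Y^{(0,\dots,0)}$ in the dual basis, $f([X,Y])$ will only pick up those brackets whose expansion contains $Y^{(0,\dots,0)}$. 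From the defining relations $[X_i, Y^\beta] = Y^{\beta - \delta^i}$ (when $\beta_i \ge 1$) and all other brackets among basis vectors vanishing, the only way to produce $Y^{(0,\dots,0)}$ is a bracket $[X_i, Y^{\delta^i}]$ where $\delta^i \in A^{\mathcal I}$.

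\textbf{Key steps.} First I would observe that $Y^{(0,\dots,0)}$ is central: for every $i$, $[X_i, Y^{(0,\dots,0)}] = 0$ since the component $\beta_i = 0$, so the relation does not apply; and all $Y^\beta$ commute with each other. Hence $Y^{(0,\dots,0)} \in \lig_{\alpha,\mathcal I}^f$ trivially (in fact $Y^{(0,\dots,0)}$ lies in the center, so it annihilates $f \circ [\cdot,\cdot]$ entirely). Second, for $\beta \in A^{\mathcal I}$ with $|\beta| \ge 2$: I claim $Y^\beta \in \lig_{\alpha,\mathcal I}^f$. Indeed $[Y^\beta, X_i] = -Y^{\beta - \delta^i}$ when $\beta_i \ge 1$, and $|\beta - \delta^i| = |\beta| - 1 \ge 1 > 0$, so $Y^{\beta - \delta^i} \ne Y^{(0,\dots,0)}$; all other brackets with $Y^\beta$ vanish; thus $f([Y^\beta, Y]) = 0$ for all $Y$ in the basis, hence for all $Y \in \lig_{\alpha,\mathcal I}$. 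This establishes the inclusion $\supseteq$. Third, for the reverse inclusion $\subseteq$: take $X = \sum_k s_k X_k + \sum_\beta t_\beta Y^\beta \in \lig_{\alpha,\mathcal I}^f$ and test against $Y = X_i$ and $Y = Y^{\delta^i}$. Testing $f([X, Y^{\delta^i}])$: the only contribution comes from $\sum_k s_k [X_k, Y^{\delta^i}] = s_i Y^{(0,\dots,0)} + (\text{terms }Y^{\delta^i - \delta^k}\text{ with }k\ne i\text{, which are not defined since }(\delta^i)_k = 0)$, so actually the only surviving term is $s_i Y^{(0,\dots,0)}$, giving $f([X, Y^{\delta^i}]) = s_i$. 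Hence $X \in \lig_{\alpha,\mathcal I}^f$ forces $s_i = 0$ for every $i$ such that $\delta^i \in A^{\mathcal I}$ — and since each $I_j$ is nonempty and $\alpha \in (\mathbb N^*)^n$, one checks $\delta^i \in A^{\mathcal I}$ for all $i \in \llbracket 1,n\rrbracket$, so all $s_i = 0$. Then testing against $Y = X_i$ with the remaining part $X = \sum_\beta t_\beta Y^\beta$: $f([X, X_i]) = -\sum_{\beta : \beta_i \ge 1} t_\beta f(Y^{\beta - \delta^i}) = -t_{\delta^i}$ (only $\beta = \delta^i$ gives $Y^{(0,\dots,0)}$). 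So $X \in \lig_{\alpha,\mathcal I}^f$ forces $t_{\delta^i} = 0$ for all $i$, i.e. all coefficients $t_\beta$ with $|\beta| = 1$ vanish. Combined with the fact that $t_{(0,\dots,0)}$ and the $t_\beta$ with $|\beta| \ge 2$ are unconstrained, this gives $X \in \langle Y^{(0,\dots,0)}, Y^\beta \mid \beta \in A^{\mathcal I}, |\beta| \ge 2 \rangle$, proving $\subseteq$.

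\textbf{Main obstacle.} The computation itself is routine once the bracket relations are in hand; the only genuine point requiring care is the bookkeeping of \emph{which} monomials $\beta$ actually belong to $A^{\mathcal I}$ — in particular verifying that $\delta^i \in A^{\mathcal I}$ for every $i$ (so that no $X_i$ survives in the isotropy algebra) and that $A^{\mathcal I}$ does contain indices $\beta$ with $|\beta| \ge 2$ whenever some $\alpha_i \ge 2$ or some $I_j$ has at least two elements, so that the stated generating set is correctly described. This is immediate from $A^j = \{\beta \preceq \sum_{i \in I_j} \alpha_i \delta^i\}$ together with $\alpha \in (\mathbb N^*)^n$ and the partition $\mathcal I$ covering $\llbracket 1,n\rrbracket$. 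A secondary subtlety is making sure that one uses the anti-symmetry of the bracket consistently (testing $f([X,Y])$ versus $f([Y,X])$ differs only by sign and does not affect the kernel), and that the bilinear form $B_f$ need only be tested on a spanning set $\{X_i\} \cup \{Y^\beta\}$ of $\lig_{\alpha,\mathcal I}$, which suffices by bilinearity.
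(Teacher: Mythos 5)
Your proof is correct and follows essentially the same route as the paper: testing $f([X,\cdot])$ against $Y^{\delta^j}$ and $X_j$ to force the coefficients of $X_j$ and $Y^{\delta^j}$ to vanish, with the reverse inclusion checked directly from the bracket relations (the paper leaves that part as "easy", you just spell it out, including the verification that $\delta^i\in A^{\cal I}$ for every $i$).
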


\begin{proof} Let $X=\sum_{j=1}^n a_j X_j +\sum_{\beta \in  \alpha^{\cal I}}b_{\beta}Y^{(\beta)}\in \lig_{\alpha,{\cal I}}^f$. For every $j\in \llbracket 1,n \rrbracket$,  $f([X,Y^{\delta^j}])=0$ (resp. $f([X,X_j))=0$) then $a_j=0$ (resp. $b_{\delta^j}=0$) hence $\lig_{\alpha,{\cal I}}^f \subset \langle Y^{(0,\dots,0)},Y^\beta~|~\beta \in A^{\cal I},|\beta| \ge2\rangle_{ev}$. The other inclusion is easy.
\end{proof}

The next lemma is an application of the article \cite[page 304]{Godfrey}, and will give us a basis of the ideal $I(f)$.

\begin{lem}$\label{lemgenIf}$ Let be a Lie algebra $\lig_{\alpha,{\cal I}}$.
Denote by $\displaystyle{\Gamma_j=\sum_{k\ge 0}\frac{(i)^k}{k!} (Y^{\delta_j})^k (ad X_j)^k}$ for every $j\in \llbracket 1,n \rrbracket$ and by $\displaystyle{\Gamma=\prod_{j=1}^n \Gamma_j}$. Then $I(f)$ is the ideal of $U(\lig_{\alpha,{\cal I}})$ generated by

$$ \left \{Y^{(0,\dots,0)}-i,~~ \Gamma(iY^\beta)~|~\beta \in A^{\cal I},|\beta| \ge2 \right \}$$
\end{lem}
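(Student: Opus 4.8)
The plan is to identify the ideal $I(f) = \ker(\rho)$ by combining Kirillov's description of the kernel of an induced representation with the explicit structure of $\lig_{\alpha,\mathcal I}$. The representation $\rho$ is induced from the character $\chi_f$ of the polarizing subalgebra $\lie{h} = \langle Y^\beta \mid \beta \in A^{\mathcal I}\rangle$, and by the general theory (as in \cite{Godfrey}), $\ker(\rho)$ is generated by $\ker(\chi_f) \cdot \mathcal U(\lig)$ together with the image under a suitable ``symmetrization/transport'' operator of the elements that vanish along the fiber directions. Concretely, the first generator comes from the character condition: since $f = (Y^{(0,\dots,0)})^*$ and the normalization in the formula for $\rho$ gives $\rho(Y^{(0,\dots,0)}) = i\,\mathrm{Id}$, the element $Y^{(0,\dots,0)} - i$ lies in $\ker(\rho)$, and it generates the part of the kernel coming from the center of $\mathcal U(\lig)$. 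First I would verify directly from the explicit formulas $\rho(X_k) = -\partial/\partial x_k$ and $\rho(Y^\beta) = i(-1)^{|\beta|}x^\beta/\beta!$ that indeed $\rho(Y^{(0,\dots,0)} - i) = 0$.

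Next I would treat the elements $Y^\beta$ with $|\beta| \ge 2$, which by Lemma \ref{genIf} span a complement in $\lig_{\alpha,\mathcal I}^f$ of the one-dimensional center. The operator $\Gamma_j = \sum_{k\ge 0} \frac{i^k}{k!}(Y^{\delta_j})^k (\operatorname{ad} X_j)^k$ is precisely the operator that conjugates the ``naive'' action on the polarization into the induced action along the $j$-th coordinate direction; it is the discrete/PBW analogue of the exponential intertwiner $e^{i x_j \operatorname{ad} X_j}$ appearing in Kirillov's orbit method. The key computation is that for any $Z \in \lig_{\alpha,\mathcal I}^f$ with $\chi_f(Z) = 0$ (equivalently, for $Z = i Y^\beta$ with $|\beta|\ge 2$, since $f(Y^\beta)=0$ there), the element $\Gamma(iY^\beta) = \prod_{j=1}^n \Gamma_j (iY^\beta)$ lies in $\ker(\rho)$: applying $\rho$ and using $[\rho(X_j),\cdot]$ to realize $\operatorname{ad} X_j$, the sum telescopes into the Taylor expansion of the transport, and evaluation against a test function picks out the value of the transported symbol at the origin, which vanishes. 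The main obstacle in this step is keeping track of the combinatorics of the nested commutators $(\operatorname{ad} X_j)^k (Y^\beta)$ in $\lig_{\alpha,\mathcal I}$ — these are governed by the relation $[X_i, Y^\beta] = Y^{\beta - \delta^i}$, so $(\operatorname{ad} X_j)^k(Y^\beta) = Y^{\beta - k\delta^j}$ when $\beta_j \ge k$ and vanishes otherwise — and matching this against the combinatorics in \cite[page 304]{Godfrey}.

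Finally, I would argue that the listed elements actually \emph{generate} the full ideal, not merely lie in it. For one inclusion, everything in the displayed set is in $I(f)$ by the computations above. For the reverse, I would invoke \cite[page 304]{Godfrey}: Godfrey's theorem states that for a nilpotent Lie algebra with a polarization $\lie h$ at $f$, the kernel of the induced representation is generated by $\{Z - f(Z) \mid Z \in \lie h\} \cup \{\Gamma(Z) \mid Z \in \lie{g}^f,\ f(Z) = 0\}$ (after suitable normalization), together with the observation that $\lie h$ is abelian here so $Z - f(Z)$ for $Z$ ranging over a basis of $\lie h$ suffices, and that $\Gamma$ annihilates the basis elements of $\lie h$ lying in $\lig^f$ already — leaving exactly the generators $Y^{(0,\dots,0)} - i$ and $\Gamma(iY^\beta)$, $|\beta|\ge 2$. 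The genuinely delicate point is checking that the normalization conventions in the present paper's formula for $\rho$ (the factors of $i$ and $(-1)^{|\beta|}$, and the choice $f = (Y^{(0,\dots,0)})^*$) match those in \cite{Godfrey}, so that ``$f(Z)$'' in Godfrey's statement becomes ``$i$'' for $Z = Y^{(0,\dots,0)}$ and ``$iY^\beta$'' is the correctly-scaled generator; this is a bookkeeping matter but it is where an error would most easily creep in.
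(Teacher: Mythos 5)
Your overall strategy coincides with the paper's: the paper gives no proof of this lemma at all, presenting it as a direct application of \cite[page 304]{Godfrey}, and your preliminary verifications are sound --- indeed $\rho(Y^{(0,\dots,0)})=i\,\mathrm{Id}$, and the membership $\Gamma(iY^{\beta})\in\ker\rho$ for $\beta\neq(0,\dots,0)$ follows from the expansion $\Gamma(iY^{\beta})=\sum_{\gamma\preceq\beta}\frac{(-1)^{|\gamma|}}{\gamma!}Y_*^{\gamma}Y^{\beta-\gamma}$ (the paper's Lemma \ref{lemgamma1}) together with $\sum_{\gamma\preceq\beta}(-1)^{|\gamma|}\binom{\beta}{\gamma}=0$, exactly the telescoping you describe.

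There is, however, a genuine flaw in the statement you attribute to Godfrey, and it is precisely the statement carrying the hard direction (that the listed elements \emph{generate} all of $I(f)$). You quote the theorem as: the kernel is generated by $\{Z-f(Z)\mid Z\in\mathfrak{h}\}\cup\{\Gamma(Z)\mid Z\in\lig_{\alpha,{\cal I}}^f,\ f(Z)=0\}$, with $\mathfrak{h}$ the polarization. That cannot be correct: for the basis element $Y^{\delta^k}$ of the polarization one has $f(Y^{\delta^k})=0$ but $\rho(Y^{\delta^k})=-ix_k\neq 0$, so $Y^{\delta^k}-f(Y^{\delta^k})=Y^{\delta^k}\notin\ker\rho$; the set you wrote down generates an ideal strictly larger than $I(f)$, and the claimed equality would fail. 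In Godfrey's theorem the generators are indexed by the stabilizer $\lig_{\alpha,{\cal I}}^f$ only (computed in Lemma \ref{genIf}), not by the polarization: up to the normalization you rightly flag, they are obtained by applying the transport operator $\Gamma$ to a basis of $\lig_{\alpha,{\cal I}}^f$ and subtracting the scalar given by $f$, which here yields exactly $Y^{(0,\dots,0)}-i$ (since $\Gamma$ fixes the central element and $\rho(Y^{(0,\dots,0)})=i$) and $\Gamma(iY^{\beta})$ for $|\beta|\ge 2$ (since $f(Y^{\beta})=0$ there). Relatedly, the remark that ``$\Gamma$ annihilates the basis elements of $\mathfrak{h}$ lying in $\lig^f$'' is off: $\Gamma(iY^{\beta})$ is a nonzero element of ${\cal U}(\lig_{\alpha,{\cal I}})$; what vanishes is its image under $\rho$. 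So the issue is not mere bookkeeping of factors of $i$: as written, the generation step invokes a false statement, and the proof must instead quote Godfrey's result with generators attached to $\lig_{\alpha,{\cal I}}^f$.
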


 The operators $\Gamma_j$ commute with each other, so the product for $\Gamma$ is defined with no more precaution on the order of factors. We shall give in Corollary $\ref{corgenIf}$ another basis of $I(f)$ which is more interesting for studying  $T(I(f))$.

For every $\gamma \in \alpha^{\cal I}$, we denote $Y_*^{\gamma}=(i)^{1-|\gamma|}\displaystyle{\prod_{k=1}^n(Y^{{\delta^k})^{\gamma_k}}}$.  For example, if $\gamma=(2,3)\in \alpha^{\cal I}$,
$Y^{\gamma}=Y^{(2,3)} \in \lig_{\alpha,{\cal I}}$, whereas $Y_*^{\gamma}=(Y^{(1,0)})^2Y^{(0,1)})^3U \in U(\lig_{\alpha,{\cal I}})$.

\begin{lem}$\label{lemgamma1}$ Let be a Lie algebra $\lig_{\alpha,{\cal I}}$. The notations are these of Lemma \ref{lemgenIf}.

\begin{enumerate}[nolistsep]
\item For every $\beta \in A^{\cal I}$ and every $j\in \llbracket 1,n \rrbracket$,

$\label{eq1gamma}  \Gamma_j(Y^{\beta})=\displaystyle{\sum_{k=0}^{\beta_j} \frac{i^k}{k!}(Y^{\delta^i})^kY^{\beta-k \delta^j}}~~~é$ and
  $~~~\Gamma(iY^{\beta})=\displaystyle{\sum_{\gamma \le \beta} \frac{(-1)^{|\gamma|}}{\gamma !}Y_*^{\gamma}Y^{\beta-\gamma}}.$
\item For every $\beta \in A^{\cal I}\setminus \{(0,\dots,0)\}$, $\label{7} \Gamma(iY^{\beta})=\displaystyle{\sum_{\gamma \le \beta} \frac{(-1)^{|\gamma|}}{\gamma !}Y_*^{\gamma}\left (Y^{\beta-\gamma}-\frac{1}{(\beta-\gamma)!}Y_*^{\beta-\gamma}\right )}. $
\item For every $\beta \in A^{\cal I}$,
 $\label{8} \displaystyle{Y^{\beta}-\frac{1}{\beta!}Y_*^{\beta}=Y_*^{\beta}(Y^{(0,\dots,0)}-i)+\sum_{\gamma \le \beta,\gamma \ne(0,\dots,0)}\frac{1}{(\beta-\gamma)!}Y_*^{\beta-\gamma}\Gamma(iY^{\gamma})}.$
\end{enumerate}
\end{lem}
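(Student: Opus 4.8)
The three identities are all formal consequences of the definitions of $\Gamma_j$, $\Gamma$, and $Y_*^\gamma$, together with the commutation relations $[X_i,Y^\beta]=Y^{\beta-\delta^i}$ in $\lig_{\alpha,{\cal I}}$. The plan is to establish item (1) first by a direct computation, then deduce (2) and (3) as algebraic rearrangements of (1).

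For item (1), first I would compute $(\operatorname{ad} X_j)^k(Y^\beta)$. Since $[X_j,Y^\beta]=Y^{\beta-\delta^j}$ when $\beta_j\ge 1$ and $0$ otherwise, an immediate induction gives $(\operatorname{ad} X_j)^k(Y^\beta)=Y^{\beta-k\delta^j}$ for $0\le k\le\beta_j$ and $0$ for $k>\beta_j$. Plugging this into $\Gamma_j=\sum_{k\ge 0}\frac{i^k}{k!}(Y^{\delta^j})^k(\operatorname{ad} X_j)^k$ yields the first formula $\Gamma_j(Y^\beta)=\sum_{k=0}^{\beta_j}\frac{i^k}{k!}(Y^{\delta^j})^kY^{\beta-k\delta^j}$. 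Next, since the $\Gamma_j$ commute (the $\operatorname{ad} X_j$ commute on the relevant subspace and the $Y^{\delta^j}$ commute with each other and with the $\operatorname{ad}X_{j'}$ appropriately), I would apply $\Gamma=\prod_j\Gamma_j$ to $iY^\beta$ and collect terms: writing the multi-index $\gamma=(k_1,\dots,k_n)$ with $\gamma\preceq\beta$, the coefficient is $\prod_j\frac{i^{k_j}}{k_j!}=\frac{i^{|\gamma|}}{\gamma!}$, the operator factor is $\prod_j(Y^{\delta^j})^{k_j}$, and the surviving monomial is $Y^{\beta-\gamma}$. Using $Y_*^\gamma=i^{1-|\gamma|}\prod_j(Y^{\delta^j})^{\gamma_j}$ and the extra factor of $i$ from $iY^\beta$, the total scalar is $i\cdot\frac{i^{|\gamma|}}{\gamma!}=\frac{i^{1+|\gamma|}}{\gamma!}$, and since $\prod_j(Y^{\delta^j})^{\gamma_j}=i^{|\gamma|-1}Y_*^\gamma$ this becomes $\frac{i^{2|\gamma|}}{\gamma!}Y_*^\gamma=\frac{(-1)^{|\gamma|}}{\gamma!}Y_*^\gamma$, giving $\Gamma(iY^\beta)=\sum_{\gamma\preceq\beta}\frac{(-1)^{|\gamma|}}{\gamma!}Y_*^\gamma Y^{\beta-\gamma}$. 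The one point requiring a little care is the bookkeeping of the powers of $i$ in the definition of $Y_*^\gamma$; I would track these carefully.

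For item (2): in the sum $\sum_{\gamma\preceq\beta}\frac{(-1)^{|\gamma|}}{\gamma!}Y_*^\gamma Y^{\beta-\gamma}$ from (1), I would split off, for each $\gamma$, the term $\frac{1}{(\beta-\gamma)!}Y_*^\gamma Y_*^{\beta-\gamma}$ and show the collection of these subtracted pieces sums to zero when $\beta\ne(0,\dots,0)$. Indeed $Y_*^\gamma Y_*^{\beta-\gamma}$ depends (up to the commuting of the $Y^{\delta^k}$, which is allowed since the $Y^{\delta^k}$ all commute in $\lig_{\alpha,{\cal I}}$) only on $\beta$ and equals a fixed element times $i^{2-|\beta|}$; the scalar coefficient $\sum_{\gamma\preceq\beta}\frac{(-1)^{|\gamma|}}{\gamma!(\beta-\gamma)!}$ equals $\frac{1}{\beta!}\sum_{\gamma\preceq\beta}(-1)^{|\gamma|}\binom{\beta}{\gamma}=\frac{1}{\beta!}\prod_k(1-1)^{\beta_k}=0$ whenever some $\beta_k\ge 1$, i.e. whenever $\beta\ne 0$. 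Hence adding and subtracting these terms is legitimate and yields the stated form.

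For item (3): I would substitute the expression from (2) for $\Gamma(iY^\gamma)$ into the right-hand side $Y_*^\beta(Y^{(0,\dots,0)}-i)+\sum_{\gamma\preceq\beta,\gamma\ne 0}\frac{1}{(\beta-\gamma)!}Y_*^{\beta-\gamma}\Gamma(iY^\gamma)$ and expand. The first term contributes $Y_*^\beta Y^{(0,\dots,0)}-iY_*^\beta$, and note $iY_*^\beta$ together with $Y_*^\beta Y^{(0)}$ corresponds to the $\gamma=(0,\dots,0)$ contributions of the double sum once reassembled; then a change of summation variables (set $\gamma'=\beta-\gamma$ for the outer, inner summation index for $\Gamma(iY^\gamma)$) collapses the double sum. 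The expected identity $Y^\beta-\frac1{\beta!}Y_*^\beta$ then emerges after checking that all cross-terms beyond the leading $Y^\beta$ cancel, again via the binomial identity $\sum(-1)^{|\gamma|}\binom{\beta}{\gamma}=0$. The main obstacle in the whole lemma is purely organizational: keeping the (commuting) operators $Y_*^\gamma$ in correct order and tracking the many factors of $i$; once the notation is set up cleanly, each identity is a one- or two-line manipulation of the binomial theorem applied coordinate-wise.
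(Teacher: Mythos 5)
Your treatment of items (1) and (2) is correct and follows the same route as the paper: compute $({\rm ad}\,X_j)^k(Y^\beta)=Y^{\beta-k\delta^j}$ for $k\le\beta_j$ (zero beyond), expand the commuting product $\Gamma=\prod_j\Gamma_j$ using $\prod_j(Y^{\delta^j})^{\gamma_j}=i^{|\gamma|-1}Y_*^{\gamma}$, and for (2) add and subtract the terms $\frac{1}{(\beta-\gamma)!}Y_*^{\gamma}Y_*^{\beta-\gamma}$, which sum to zero because $\sum_{\gamma\preceq\beta}(-1)^{|\gamma|}\binom{\beta}{\gamma}=0$ when $\beta\ne(0,\dots,0)$.

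For item (3) you depart from the paper, which introduces $Z_{(0,\dots,0)}=Y^{(0,\dots,0)}-i$, $Z_{\beta}=\beta!\,\Gamma(iY^{\beta})$ and appeals to binomial (Pascal) inversion; you propose instead to substitute (2) into the right-hand side and check that the double sum collapses. That direct route is legitimate in principle, but the bookkeeping of the factors of $i$ that you defer is exactly where the argument fails to close as asserted. With the definition $Y_*^{(0,\dots,0)}=i$ and the product rule $Y_*^{\gamma}Y_*^{\beta-\gamma}=i\,Y_*^{\beta}$ (both used in the proof of (2)), reindexing the double sum does kill all interior terms by the alternating binomial identity, but the boundary terms survive with the wrong constants: the term with $\gamma=\beta$ and inner index $(0,\dots,0)$ carries the factor $Y_*^{(0,\dots,0)}Y_*^{(0,\dots,0)}=-1$ and produces $-\bigl(Y^{\beta}-\frac{1}{\beta!}Y_*^{\beta}\bigr)$, while the stratum where the inner index equals $\gamma$ sums to $-\frac{i}{\beta!}Y_*^{\beta}\bigl(Y^{(0,\dots,0)}-i\bigr)$. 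So the substitution gives $\sum_{\gamma\ne 0}\frac{1}{(\beta-\gamma)!}Y_*^{\beta-\gamma}\Gamma(iY^{\gamma})=-\frac{i}{\beta!}Y_*^{\beta}\bigl(Y^{(0,\dots,0)}-i\bigr)-\bigl(Y^{\beta}-\frac{1}{\beta!}Y_*^{\beta}\bigr)$, i.e.\ the identity holds with the first term weighted by $-i/\beta!$ and the sum preceded by a minus sign, not in the displayed normalization; your claim that "all cross-terms cancel" to yield the formula as written is not what the computation gives. A concrete check: for $n=1$, $\beta=\delta^1$, one has $\Gamma(iY^{\delta^1})=-Y^{\delta^1}\bigl(Y^{(0)}-i\bigr)$, the left-hand side is $0$, while the displayed right-hand side equals $(1-i)Y^{\delta^1}Y^{(0)}-(1+i)Y^{\delta^1}\ne 0$. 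To complete your proof you must carry out this $i$-bookkeeping explicitly and fix the constants (equivalently, state the convention used for $Y_*^{(0,\dots,0)}$), or argue as the paper does via inversion while treating the $\gamma=(0,\dots,0)$ term separately; the corrected identity still shows that $\beta!\,Y^{\beta}-Y_*^{\beta}$ lies in the ideal generated by $Y^{(0,\dots,0)}-i$ and the $\Gamma(iY^{\gamma})$, which is all that Corollary \ref{corgenIf} requires.
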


\begin{proof}
\begin{enumerate}[nolistsep]
\item Let $\beta \in A^{\cal I}$. If $0 \le k \le \beta_j$ then  $ad^k X_j(Y^{\beta})=Y^{\beta-k\delta^j}$, if  $k> \beta_i$ then $ad^k X_i(Y^{\beta})=0$, which proves Formula $\eqref{eq1gamma}$. Let $(j,l)\in \llbracket 1,n \rrbracket^2$ and $k\in \mathbb{N}$, if $j\ne l$ then $ad X_j((Y^{\delta^j})^k)=0 $. Using   for each $j\in \llbracket 1,n \rrbracket$  the previous formula we obtain
\begin{equation*}
\begin{split}
\Gamma( iY^{\beta})&=\sum_{(\gamma_1,\gamma_2,\dots,\gamma_n)\in \prod_{j=1}^n  \llbracket 0,\beta_j \rrbracket} i\frac{i^{\gamma_n}}{\gamma_n!}\frac{i^{\gamma_{n-1}}}{\gamma_{n-1}!}\dots\frac{i^{\gamma_1}}{\gamma_1!}(Y^{\delta^n})^{\gamma_n}(Y^{\delta^{n-1}})^{\gamma_{n-1}}\dots (Y^{\delta^1})^{\gamma_1}Y^{\beta-\sum_{i=1}^n\gamma_i\delta^i}\\[-6pt]
&= \sum_{\gamma \le \beta} \frac{(-1)^{|\gamma|}}{\gamma !}Y_*^{\gamma}Y^{\beta-\gamma}.
\end{split}
\end{equation*}
\item  Let $\beta \in A^{\cal I}\setminus \{(0,\dots,0)\}$. From previous formula we have

$\Gamma(iY^{\beta})=\displaystyle{\sum_{\gamma \le \beta} \frac{(-1)^{|\gamma|}}{\gamma!}Y_*^{\gamma}\left (Y^{\beta-\gamma}-\frac{1}{(\beta-\gamma)!}Y_*^{\beta-\gamma}\right )+ \frac{iY_*^{\beta}}{\beta!}\left (\sum_{\gamma \le \beta}(-1)^{|\gamma|}\binom{\beta}{\gamma}\right )}$,

 and $\displaystyle{\sum_{\gamma \le \beta}(-1)^{|\gamma|}\binom{\beta}{\gamma}=0}$ thus the formula holds.

\item Let $\beta \in A^{\cal I}$. We set $Z_{(0,\dots,0)}=Y^{(0,\dots,0)}-i$
   and $Z_{\beta}=\beta! \Gamma(iY^{\beta})=\sum_{\gamma \le \beta}(-1)^{|\gamma|}\binom{\beta}{\gamma} Y_*^{\gamma}\left ((\beta-\gamma)!Y^{\beta-\gamma}-Y_*^{\beta-\gamma} \right)$.
 Pascal's binomial inversion formula  states that  $\beta!Y^{\beta}-Y_*^{\beta}=\sum_{\gamma \le \beta}\binom{\beta}{\gamma}Y_*^{\beta-\gamma}Z_{\gamma}$.
\end{enumerate}
\end{proof}
\noindent With Lemma $\ref{lemgenIf}$ and formulas  $\eqref{7}$ et $\eqref{8}$, we can easily prove next Corollary.

\begin{cor}$\label{corgenIf}$ We have  $I(f)=\langle Y_*^{\beta}-\beta!Y^{\beta}|\beta \in A^{\cal I} \rangle_{ideal}$.
\end{cor}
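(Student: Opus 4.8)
The plan is to show the two ideals coincide by proving each generator of one lies in the other. Write $J=\langle Y_*^{\beta}-\beta!\,Y^{\beta}\mid \beta\in A^{\cal I}\rangle_{\rm ideal}$ and recall from Lemma~$\ref{lemgenIf}$ that $I(f)=\langle Y^{(0,\dots,0)}-i,\ \Gamma(iY^\beta)\mid \beta\in A^{\cal I},\,|\beta|\ge 2\rangle_{\rm ideal}$. The whole argument is a bookkeeping exercise using the three identities of Lemma~$\ref{lemgamma1}$, so I will not grind through the binomial manipulations already carried out there.

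First I would prove $I(f)\subseteq J$. The generator $Y^{(0,\dots,0)}-i$ is $\frac{1}{0!}(Y_*^{(0,\dots,0)}-0!\,Y^{(0,\dots,0)})$ up to sign (since $Y_*^{(0,\dots,0)}=(i)^{1-0}\cdot 1=i$), hence lies in $J$. For the generators $\Gamma(iY^\beta)$ with $|\beta|\ge 2$, I would use formula~$\eqref{7}$: it expresses $\Gamma(iY^\beta)$ as a $U(\lig_{\alpha,{\cal I}})$-linear combination of the elements $Y^{\beta-\gamma}-\frac{1}{(\beta-\gamma)!}Y_*^{\beta-\gamma}$ over $\gamma\preceq\beta$. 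Each such element equals $-\frac{1}{(\beta-\gamma)!}\big(Y_*^{\beta-\gamma}-(\beta-\gamma)!\,Y^{\beta-\gamma}\big)$, a scalar multiple of a generator of $J$; so $\Gamma(iY^\beta)\in J$, and therefore $I(f)\subseteq J$.

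Next I would prove $J\subseteq I(f)$, which is exactly formula~$\eqref{8}$ read the right way: for every $\beta\in A^{\cal I}$,
\begin{equation*}
Y^{\beta}-\tfrac{1}{\beta!}Y_*^{\beta}=Y_*^{\beta}\big(Y^{(0,\dots,0)}-i\big)+\sum_{\gamma\preceq\beta,\ \gamma\ne(0,\dots,0)}\tfrac{1}{(\beta-\gamma)!}Y_*^{\beta-\gamma}\,\Gamma(iY^{\gamma}).
\end{equation*}
The right-hand side is manifestly in $I(f)$, except that the term $\gamma=\delta^k$ (the case $|\gamma|=1$) involves $\Gamma(iY^{\delta^k})$, which is not among the listed generators of $I(f)$. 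So the one point requiring care — and the only real obstacle — is to check that $\Gamma(iY^{\delta^k})$ itself belongs to $I(f)$. This follows from formula~$\eqref{eq1gamma}$: $\Gamma(iY^{\delta^k})=\sum_{\gamma\preceq\delta^k}\frac{(-1)^{|\gamma|}}{\gamma!}Y_*^\gamma Y^{\delta^k-\gamma}=Y^{\delta^k}-iY_*^{\delta^k}\cdot\frac{1}{1}$, and since $Y_*^{\delta^k}=(i)^{1-1}Y^{\delta^k}=Y^{\delta^k}$ one gets $\Gamma(iY^{\delta^k})=Y^{\delta^k}-i\,Y^{\delta^k}=$ wait — more carefully, $\Gamma(iY^{\delta^k})=-Y_*^{\delta^k}(Y^{(0,\dots,0)}-i)$ after regrouping, which lies in $I(f)$. (Alternatively one simply notes $\delta^k$ with $|\delta^k|=1$ means $Y^{\delta^k}-\frac{1}{1!}Y_*^{\delta^k}=0$, so these particular generators of $J$ are trivial and need no argument at all; the nontrivial generators of $J$ all have $|\beta|\ge 2$, and for those every $\gamma$ appearing with $|\gamma|=1$ contributes $\Gamma(iY^{\delta^k})\in I(f)$ as just shown.) Combining $Y^\beta-\frac{1}{\beta!}Y_*^\beta\in I(f)$ for all $\beta$ with $I(f)\subseteq J$ gives $J=I(f)$, which is the claim. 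I expect the only subtlety to be this low-degree case, together with keeping straight that multiplying by $\beta!$ or $\frac{1}{\beta!}$ does not change the ideal generated since we are over $\mathbb{C}$.
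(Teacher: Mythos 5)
Your proof is correct and follows essentially the same route as the paper, which obtains the corollary directly from the generators in Lemma \ref{lemgenIf} together with the second and third identities of Lemma \ref{lemgamma1} (one inclusion from each). Your explicit check that $\Gamma(iY^{\delta^k})=-Y_*^{\delta^k}\,(Y^{(0,\dots,0)}-i)\in I(f)$, handling the $|\gamma|=1$ terms in the third identity, just fills in the low-degree detail the paper leaves implicit in its ``easily prove'' remark.
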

\noindent From now on, for every $(U,V) \in U(\lig_{\alpha,{\cal I}})^2$, we shall denote
$$U\equiv V~~mod(I(f)) ~~~\hbox{ if and only if}~~~  U-V \in I(f).$$

\begin{lem}$\label{lemTIf}$

\begin{enumerate}[nolistsep]
\item We have $Y^{(0,\dots,0)} \in T(I(f))$.
\item Let $\beta, \gamma \in A^{\cal I}$. If there exists $i\in \llbracket 1,n \rrbracket$  $1\le \beta_i < \alpha_i$ and $1\le \gamma_i < \alpha_i$ then $Y^{\beta}Y^{\gamma} \in T(I(f))$
\end{enumerate}

\end{lem}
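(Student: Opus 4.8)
The plan is to exhibit, for each claimed membership, an explicit element of $I(f)$ whose maximal term (for the total degree ordering $\eqref{degtot}$) is the stated monomial; by definition of $T(I(f))$ this suffices. The main tool is Corollary $\ref{corgenIf}$, which tells us that $Y_*^{\beta}-\beta!\,Y^{\beta}\in I(f)$ for every $\beta\in A^{\cal I}$, together with the fact that $I(f)$ is a two-sided ideal, so we may left- and right-multiply these generators by arbitrary monomials of ${\cal S}$ and take linear combinations. Throughout I will use that $Y_*^{\gamma}$ is, up to a nonzero scalar, the monomial $\prod_{k}(Y^{\delta^k})^{\gamma_k}$, so that $\deg(Y_*^{\gamma})=|\gamma|=\deg(Y^{\gamma})$; this degree equality is what makes the comparison of maximal terms delicate, since it forces us to use the lexical part of the ordering, in which each $Y^{\delta^k}$ comes strictly before any $Y^{\beta}$ with $|\beta|\ge 2$.

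For part (1): take $\beta=(0,\dots,0)$ in Corollary $\ref{corgenIf}$. Then $Y_*^{(0,\dots,0)}=i^{1-0}\cdot 1=i$ (the empty product, scaled), so the generator is, up to normalization, $i - 0!\,Y^{(0,\dots,0)} = i - Y^{(0,\dots,0)}$, an element of $I(f)$ whose only noncEqual term is $Y^{(0,\dots,0)}$ (the scalar $i$ is the monomial $1$, which is $<Y^{(0,\dots,0)}$ in total degree). Hence $T\big(Y^{(0,\dots,0)}-i\big)=Y^{(0,\dots,0)}$ and $Y^{(0,\dots,0)}\in T(I(f))$.

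For part (2): suppose $i\in\llbracket 1,n\rrbracket$ satisfies $1\le\beta_i<\alpha_i$ and $1\le\gamma_i<\alpha_i$. The strategy is to find an element of $I(f)$ of the form $Y^{\beta}Y^{\gamma}+(\text{lower terms})$. One writes $Y^{\beta}Y^{\gamma}$ using the generators: from Corollary $\ref{corgenIf}$, $\beta!\,Y^{\beta}\equiv Y_*^{\beta}$ and $\gamma!\,Y^{\gamma}\equiv Y_*^{\gamma}$ mod $I(f)$, so $\beta!\gamma!\,Y^{\beta}Y^{\gamma}\equiv Y_*^{\beta}Y_*^{\gamma}$ mod $I(f)$, and $Y_*^{\beta}Y_*^{\gamma}$ is (up to scalar) the monomial $\prod_k(Y^{\delta^k})^{\beta_k+\gamma_k}$. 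The hypothesis $\beta_i,\gamma_i\ge 1$ with $\beta_i+\gamma_i\le \big((\alpha_i-1)+(\alpha_i-1)\big)$, but more to the point $\beta_i+\gamma_i$ could exceed $\alpha_i$ — which is exactly the situation where $Y^{\beta+\gamma}$ need not lie in $A^{\cal I}$, but $Y_*^{\beta}Y_*^{\gamma}$ still makes sense in ${\cal U}(\lig_{\alpha,{\cal I}})$. Comparing maximal terms: $Y^{\beta}Y^{\gamma}$ has degree $|\beta|+|\gamma|$, and among terms of that degree the ordering is lexical with $X_1<\dots<X_n<Y^{\delta^1}<\dots<Y^{\delta^n}<\dots<Y^{\beta}$; since $Y^{\beta}$ and $Y^{\gamma}$ each contain at least one factor of weight $\ge 2$ (as $|\beta|,|\gamma|\ge 2$ — note $\beta_i\ge1$ and $\gamma_i\ge 1$ already give $|\beta|,|\gamma|\ge 1$, and one checks from the structure of $A^{\cal I}$ and the condition $\beta_i<\alpha_i$ that actually $|\beta|\ge 2$; if some $\beta$ with $|\beta|=1$ slips through, handle it directly), the product $Y^{\beta}Y^{\gamma}$ is lexically larger than $Y_*^{\beta}Y_*^{\gamma}$, which is a product of weight-one generators only. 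Therefore $T\big(\beta!\gamma!\,Y^{\beta}Y^{\gamma}-Y_*^{\beta}Y_*^{\gamma}\big)=Y^{\beta}Y^{\gamma}$, and this element lies in $I(f)$, giving $Y^{\beta}Y^{\gamma}\in T(I(f))$.

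The step I expect to be the main obstacle is the precise verification in part (2) that the maximal term of the chosen ideal element is genuinely $Y^{\beta}Y^{\gamma}$ and not some other monomial of the same total degree: one must carefully unwind the normal-ordering of $Y^{\beta}Y^{\gamma}$ in the PBW basis ${\cal S}$ of ${\cal U}(\lig_{\alpha,{\cal I}})$ — the $X$'s and $Y$'s do not commute, so $Y^{\beta}Y^{\gamma}$ is already in normal order (the $Y$'s commute among themselves and $\beta,\gamma$ are sorted), but one must also check that no cancellation of leading terms occurs when one subtracts $Y_*^{\beta}Y_*^{\gamma}$, and that $Y^{\beta}Y^{\gamma}$ strictly dominates every monomial appearing in the correction terms coming from Lemma $\ref{lemgamma1}$(3). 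This is a bookkeeping argument in the monomial ordering rather than a conceptual difficulty, and the hypothesis $\beta_i,\gamma_i<\alpha_i$ is what guarantees $Y^{\beta},Y^{\gamma}\in A^{\cal I}$ (so the symbols make sense) while $\beta_i+\gamma_i$ can be large enough to force $Y^{\beta}Y^{\gamma}$ to be a ``genuinely new'' leading monomial not reachable by the generators alone.
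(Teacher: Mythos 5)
Part (1) of your proposal is correct and coincides with the paper's argument: $Y^{(0,\dots,0)}-i\in I(f)$ and its maximal term is $Y^{(0,\dots,0)}$. Part (2), however, has a genuine error. Your candidate ideal element $\beta!\gamma!\,Y^{\beta}Y^{\gamma}-Y_*^{\beta}Y_*^{\gamma}$ does not have $Y^{\beta}Y^{\gamma}$ as its maximal term. The degree entering the total degree ordering is the PBW degree in ${\cal U}(\lig_{\alpha,{\cal I}})$, in which every $Y^{\beta}$ with $\beta\in A^{\cal I}$ is a Lie algebra generator of degree $1$; hence $\deg(Y^{\beta}Y^{\gamma})=2$, whereas $Y_*^{\beta}Y_*^{\gamma}$ is, up to a nonzero scalar, the monomial $\prod_{k}(Y^{\delta^k})^{\beta_k+\gamma_k}$ of degree $|\beta|+|\gamma|$. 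Your assertion that $\deg(Y_*^{\gamma})=|\gamma|=\deg(Y^{\gamma})$ is false, and since degree is compared before the lexical tie-break, whenever $|\beta|+|\gamma|>2$ the maximal term of your element is $Y_*^{\beta}Y_*^{\gamma}$; you have then only shown $\prod_k(Y^{\delta^k})^{\beta_k+\gamma_k}\in T(I(f))$, not $Y^{\beta}Y^{\gamma}\in T(I(f))$. In the only remaining case $\beta=\gamma=\delta^i$ your element vanishes identically (since $Y_*^{\delta^i}=Y^{\delta^i}$), so it proves nothing. The lexical fallback you sketch is also unsupported by the stated ordering: the $Y^{\delta^k}$ are not all smaller than the $Y^{\beta}$ with $|\beta|\ge2$ (for instance $Y^{\delta^1}>Y^{2\delta^n}$ in the lexical order on $n$-uples).

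What is missing is a cancellation of the dominant $Y_*$ parts, and this is exactly what the paper does. It applies the congruence $Y^{\beta}\equiv\frac{1}{\beta!}Y_*^{\beta} \bmod I(f)$ to the two degree-two products $Y^{\beta}Y^{\gamma}$ and $Y^{\beta-\delta^i}Y^{\gamma+\delta^i}$; since $Y_*^{\beta}Y_*^{\gamma}$ and $Y_*^{\beta-\delta^i}Y_*^{\gamma+\delta^i}$ are the same element, the combination $Y^{\beta}Y^{\gamma}-\frac{\gamma_i+1}{\beta_i}\,Y^{\beta-\delta^i}Y^{\gamma+\delta^i}$ is congruent to $\bigl(\frac{1}{\beta!\gamma!}-\frac{\gamma_i+1}{\beta_i}\,\frac{1}{(\beta-\delta^i)!\,(\gamma+\delta^i)!}\bigr)Y_*^{\beta}Y_*^{\gamma}=0$ modulo $I(f)$, i.e.\ it lies in $I(f)$, and it is a difference of two degree-two monomials with $Y^{\beta-\delta^i}Y^{\gamma+\delta^i}<Y^{\beta}Y^{\gamma}$, so its maximal term is $Y^{\beta}Y^{\gamma}$. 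This is also where the hypotheses really act: $\beta_i\ge1$ guarantees $\beta-\delta^i\in A^{\cal I}$ and $\gamma_i<\alpha_i$ guarantees $\gamma+\delta^i\in A^{\cal I}$, so the auxiliary product exists; they are not, as your last paragraph suggests, about $\beta_i+\gamma_i$ being large. To repair your proof you would need to replace your single congruence by such a two-term combination (or an equivalent device making the $Y_*$ terms cancel).
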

\begin{proof}
\begin{enumerate}[nolistsep]
\item As $Y^{(0,\dots,0)} -i\in I(f)$ then $Y^{(0,\dots,0)} \in T(I(f))$.
\item Let $\beta, \gamma \in A^{\cal I}$ and $i\in \llbracket 1,n \rrbracket$ such that $1\le \beta_i < \alpha_i$ et $1\le \gamma_i < \alpha_i$.
The computation
\begin{equation*}
\begin{split}
Y^{\beta}Y^{\gamma} -\frac{\gamma_i +1}{\beta_i}Y^{\beta-\delta^i}Y^{\gamma+\delta_i}
 &\equiv\frac{1}{\beta!}Y_*^{\beta}\frac{1}{\gamma!}Y_*^{\gamma}-\frac{\gamma_i +1}{\beta_i}\frac{1}{(\beta-\delta^i)!}Y_*^{\beta-\delta^i}\frac{1}{(\gamma+\delta^i)!}Y_*^{\gamma+\delta^i}~~mod(I(f)) \\[-6pt] &\equiv0~~mod(I(f))
\end{split}
\end{equation*}
 proves that $Y^{\beta}Y^{\gamma} \in T(I(f))$ since $Y^{\beta-\delta^i}Y^{\gamma+\delta_i}<Y^{\beta}Y^{\gamma}$.
\end{enumerate}
\end{proof}

\begin{lem}$\label{lemcroc}$ Let $T=X_1^{p_1}\dots X_n^{p_n} \prod_{\beta \in A^{\cal I}}(Y^{\beta})^{q_{\beta}}\in {\cal S}$. Then,
\begin{enumerate}[nolistsep]
\item For every $k \in \llbracket 1,n \rrbracket$,  $X_k[T,Y^{\delta^k}]\equiv p_kT~~mod(I(f))$.
\item For every $k\in \llbracket 1,n \rrbracket$,  $[X_k,T]Y^{\delta^i}\equiv (\sum_{\beta \in A^{\cal I}} q_{\beta}\beta_k)T~~mod(I(f))$.
\end{enumerate}
\end{lem}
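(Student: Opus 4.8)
Write $T=AB$ with $A=X_1^{p_1}\cdots X_n^{p_n}$ and $B=\prod_{\beta\in A^{\cal I}}(Y^{\beta})^{q_{\beta}}$. Both formulas are obtained by first producing an \emph{exact} identity in $U(\lig_{\alpha,{\cal I}})$ — using only that the $X_i$ commute among themselves, that the $Y^{\beta}$ commute among themselves, that $[X_i,Y^{\beta}]=Y^{\beta-\delta^i}$ when $\beta_i\ge1$ and $[X_i,Y^{\beta}]=0$ otherwise, the Leibniz rule $[A,BC]=[A,B]C+B[A,C]$, and the centrality of $Y^{(0,\dots,0)}$ in $\lig_{\alpha,{\cal I}}$ — and then, in case (2), reducing the resulting monomials in the $Y^{\beta}$ modulo $I(f)$ by means of Corollary \ref{corgenIf}.

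\textbf{Part (1).} Since $Y^{\delta^k}$ commutes with every $Y^{\gamma}$, we have $[T,Y^{\delta^k}]=[A,Y^{\delta^k}]\,B$. From $[X_j,Y^{\delta^k}]=0$ for $j\neq k$ and $[X_k,Y^{\delta^k}]=Y^{(0,\dots,0)}$, only the block $X_k^{p_k}$ of $A$ contributes, and iterating Leibniz (moving the central element $Y^{(0,\dots,0)}$ out) gives $[X_k^{p_k},Y^{\delta^k}]=p_k\,Y^{(0,\dots,0)}X_k^{p_k-1}$. Hence
\[ [T,Y^{\delta^k}]=p_k\,Y^{(0,\dots,0)}\,X_1^{p_1}\cdots X_k^{p_k-1}\cdots X_n^{p_n}\,B, \]
and multiplying on the left by $X_k$, which commutes with every $X_j$ and with $Y^{(0,\dots,0)}$, restores the monomial $A$, so that $X_k[T,Y^{\delta^k}]=p_k\,Y^{(0,\dots,0)}\,T$ already in $U(\lig_{\alpha,{\cal I}})$; reducing the central scalar $Y^{(0,\dots,0)}$ modulo $I(f)$ via Corollary \ref{corgenIf} gives the claim.

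\textbf{Part (2).} Here $X_k$ commutes with $A$, so $[X_k,T]=A\,[X_k,B]$. Expanding $[X_k,B]$ by Leibniz over the product and using $[X_k,(Y^{\beta})^{q_{\beta}}]=q_{\beta}\,Y^{\beta-\delta^k}(Y^{\beta})^{q_{\beta}-1}$ when $\beta_k\ge1$ (and $=0$ otherwise), then freely reordering the pairwise commuting $Y$'s, one gets
\[ [X_k,T]=\sum_{\beta\in A^{\cal I}}q_{\beta}\,A\,Y^{\beta-\delta^k}(Y^{\beta})^{q_{\beta}-1}\prod_{\gamma\neq\beta}(Y^{\gamma})^{q_{\gamma}}, \]
the sum effectively running over $\beta$ with $\beta_k,q_\beta\ge1$. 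Multiply on the right by $Y^{\delta^k}$. The one genuinely non-formal step is the congruence
\[ Y^{\delta^k}Y^{\beta-\delta^k}\equiv\beta_k\,Y^{(0,\dots,0)}\,Y^{\beta}\pmod{I(f)}, \]
which follows from $Y^{\gamma}\equiv\tfrac{1}{\gamma!}Y_*^{\gamma}$ (Corollary \ref{corgenIf}), from the exact identity $Y_*^{\delta^k}Y_*^{\beta-\delta^k}=i\,Y_*^{\beta}$ read off the definition $Y_*^{\gamma}=(i)^{1-|\gamma|}\prod_l(Y^{\delta^l})^{\gamma_l}$, and from $\beta!/(\beta-\delta^k)!=\beta_k$. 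Substituting and reassembling the $Y$-factors into $B$ yields $[X_k,T]\,Y^{\delta^k}\equiv\big(\sum_{\beta}q_{\beta}\beta_k\big)\,Y^{(0,\dots,0)}\,T\pmod{I(f)}$; reducing $Y^{(0,\dots,0)}$ then gives the stated congruence.

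\textbf{Main obstacle.} Everything is bookkeeping with commuting families of generators and the Leibniz rule; the only real content is the congruence $Y^{\delta^k}Y^{\beta-\delta^k}\equiv\beta_k\,Y^{(0,\dots,0)}\,Y^{\beta}$ modulo $I(f)$ of Part (2), i.e.\ tracking precisely the scalars produced by Corollary \ref{corgenIf}. Alternatively one may short-circuit the computation through the Kirillov representation $\rho$: since $I(f)=\ker\rho$, it suffices to verify the two identities inside ${\cal P}(\mathbb{R}^n)$, where $\rho(T)$ is a scalar multiple of $\partial_1^{p_1}\cdots\partial_n^{p_n}$ followed by multiplication by a monomial, and both reduce to the canonical relation $[\partial_k,x_k]=\mathrm{Id}$, giving $\rho(\text{left-hand side})=(\text{scalar})\,\rho(T)$.
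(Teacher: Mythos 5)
Your argument is correct and essentially the same as the paper's: both proofs establish the identities exactly by Leibniz bookkeeping (only the $X_k$-block of $A$, resp. only the $Y^\beta$-factors with $\beta_k\ge 1$, contribute) and then reduce modulo $I(f)$ via the generator relations — the paper invokes $Y^{(0,\dots,0)}-i\in I(f)$ for (1) and $Y^{\beta-\delta^k}Y^{\delta^k}-i\beta_k Y^\beta\in I(f)$ for (2), while you derive the latter congruence from Corollary \ref{corgenIf}, which is the same computation. One remark on constants: by Corollary \ref{corgenIf} the central element $Y^{(0,\dots,0)}$ reduces to $i$, not to $1$, so your final reduction literally gives $i\,p_kT$ and $i\big(\sum_{\beta}q_{\beta}\beta_k\big)T$; this is precisely the factor of $i$ that the paper's own proof also produces (via $Y^{\beta-\delta^k}Y^{\delta^k}\equiv i\beta_kY^{\beta}$) and then silently drops in the stated conclusion, so your bookkeeping matches the source, including that normalization glitch.
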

\begin{proof}

\begin{enumerate}
\item  Let $k\in \llbracket 1,n \rrbracket$,
\begin{equation*}
\begin{split}
X_k[T,Y^{\delta^k}]&=X_k\left(p_kX_1^{p_1}\dots X_k^{p_k-1}\dots X_n^{p_n} \prod_{\beta \in A^{\cal I}}(Y^{\beta})^{q_{\beta}}Y^{(0,\dots,0)} \right)\\[-6pt]
&=p_kT+ip_kX_1^{p_1}\dots X_k^{p_k}\dots X_n^{p_n} \prod_{\beta \in A^{\cal I}}\left(Y^{\beta})^{q_{\beta}}(Y^{(0,\dots,0)}-i \right),
\end{split}
\end{equation*}
as $Y^{(0,\dots,0)}-i \in I(f)$ we can conclude.

\item Let $k\in \llbracket 1,n \rrbracket$, we have

\begin{equation*}
\begin{split}
[X_k,T]Y^{\delta^k}&=X_1^{p_1}\dots X_n^{p_n}\sum_{\beta \in A^{\cal I},\beta_k \ge1} q_{\beta} \left (\prod_{\beta' \in A^{\cal I},\beta'\ne \beta}(Y^{\beta'})^{q_{\beta'}}\right )Y^{\beta-\delta^k}Y^{\delta^k}\\[-6pt]
                   &\equiv\left(\sum_{\beta \in A^{\cal I}} q_{\beta}\beta_k\right)T~~mod(I(f)),
\end{split}
\end{equation*}
since $Y^{\beta-\delta^k}Y^{\delta^k}-i\beta_kY^{\beta}$ is in $I(f)$.
\end{enumerate}
\end{proof}

\subsection{ Reduction in the algebra  $U(\lig_{\alpha,{\cal I}})$}\label{sec4}
For every $a=(a_k)_{1\le k \le n} \in \mathbb{C}^n$ and $b=(b_k)_{1\le k \le n} \in \mathbb{C}^n$ , we define the operators:

 \begin{equation*} \begin{application}{G_{a,b}}{U(\lig_{\alpha,{\cal I}})}{U(\lig_{\alpha,{\cal I}})}{T}{\displaystyle{\sum_{k=1}^na_kX_k[T,Y^{\delta^k}]+b_k[X_k,T]Y^{\delta^k}}.} \end{application} \end{equation*}
 Every two-sided ideal of $U(\lig_{\alpha,{\cal I}})$, in particular the ideal $I(f)$,  is  invariant under the operators $G_{a,b}$.

\begin{lem}$\label{redOif}$  Let $T=X_1^{p_1}\dots X_n^{p_n} \prod_{\beta \in A^{\cal I}}(Y^{\beta})^{q_{\beta}}\in O(I(f))$. There exists a $p$-uple $\left((i_1,r_{i_1})\dots,(i_p,r_{i_p})\right )\in \prod_{j=1}^p (I_j \times \llbracket 1,\alpha_{i_j} \rrbracket)$ such that:
\begin{equation*}
G_{a,b}(T)\equiv\left(s+\sum_{j=1}^p (\frac{r_{i_j}}{\alpha_{i_j}}-1) \right)T~~mod(I(f)),
\end{equation*}
where  $a=(1,\dots,1)$, $b=\sum_{j=1}^p \frac{1}{\alpha_{i_j}}\delta^{i_j}$ and $s=\deg(T)$.
\end{lem}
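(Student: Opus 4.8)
The plan is first to collapse $G_{a,b}(T)$ modulo $I(f)$ to an explicit scalar multiple of $T$ using Lemma~\ref{lemcroc}, and then to choose the pairs $(i_j,r_{i_j})$ one block at a time so that this scalar equals $s+\sum_{j}\bigl(\tfrac{r_{i_j}}{\alpha_{i_j}}-1\bigr)$. Write $T=X_1^{p_1}\cdots X_n^{p_n}\prod_{\beta\in A^{\cal I}}(Y^{\beta})^{q_{\beta}}$ and set $P=\sum_k p_k$, $\mu_k=\sum_{\beta}q_{\beta}\beta_k$, $Q=\sum_{\beta}q_{\beta}$. Since $Y^{(0,\dots,0)}\in T(I(f))$ by Lemma~\ref{lemTIf}(1) while $T\notin T(I(f))$, we have $q_{(0,\dots,0)}=0$; because every $\beta\in A^{\cal I}\setminus\{(0,\dots,0)\}$ lies in exactly one $A^{j}$, this gives $Q=\sum_{j=1}^{p}Q_j$ with $Q_j=\sum_{\beta\in A^{j}}q_{\beta}$, and $\deg(T)=P+Q$. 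Applying Lemma~\ref{lemcroc}(1) with the coefficients $a_k=1$ to the summands $X_k[T,Y^{\delta^k}]$, and Lemma~\ref{lemcroc}(2) with coefficients $b_k$ to the summands $[X_k,T]Y^{\delta^k}$, and summing, gives
\begin{equation*}
G_{a,b}(T)\equiv\Bigl(P+\sum_{k=1}^{n} b_k\mu_k\Bigr)T\pmod{I(f)}.
\end{equation*}
For $a=(1,\dots,1)$ and $b=\sum_{j=1}^{p}\tfrac1{\alpha_{i_j}}\delta^{i_j}$, the $i_j$ lying in the distinct blocks $I_j$, the scalar becomes $P+\sum_{j}\tfrac{\mu_{i_j}}{\alpha_{i_j}}$ (note $\mu_{i_j}=\sum_{\beta\in A^{j}}q_{\beta}\beta_{i_j}$ since $i_j\in I_j$). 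Substituting $s=P+Q$ and $Q-p=\sum_j(Q_j-1)$ into the desired identity, it therefore suffices to find, for each $j$, an index $i_j\in I_j$ for which
\begin{equation*}
r_{i_j}:=\mu_{i_j}-\alpha_{i_j}(Q_j-1)\in\llbracket1,\alpha_{i_j}\rrbracket.
\end{equation*}

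The upper bound is free: $\mu_{i_j}=\sum_{\beta\in A^{j}}q_{\beta}\beta_{i_j}\le\alpha_{i_j}\sum_{\beta\in A^{j}}q_{\beta}=\alpha_{i_j}Q_j$ because $\beta_{i_j}\le\alpha_{i_j}$ for $\beta\in A^{j}$, so $r_{i_j}\le\alpha_{i_j}$ whatever $i_j$ is; and if $Q_j=0$ one simply takes any $i_j\in I_j$, so that $\mu_{i_j}=0$ and $r_{i_j}=\alpha_{i_j}$. For the lower bound, the idea is to choose $i_j$ to be a \emph{common support index} of the block-$j$ factors of $T$, i.e. an index $i_j\in I_j$ with $\beta_{i_j}\ge1$ for every $\beta\in A^{j}$ with $q_{\beta}\ge1$. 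Indeed, $T\in O(I(f))$ forbids, by Lemma~\ref{lemTIf}(2), two of these factors (counted with multiplicity) being strictly fractional at the same index; hence at $i_j$ all but at most one of them satisfy $\beta_{i_j}=\alpha_{i_j}$, and the exceptional one (if any) still has $\beta_{i_j}\ge1$, so that $\mu_{i_j}\ge\alpha_{i_j}(Q_j-1)+1$, i.e. $r_{i_j}\ge1$. Collecting these $(i_j,r_{i_j})$ over $1\le j\le p$ then yields $G_{a,b}(T)\equiv\bigl(s+\sum_{j=1}^{p}(\tfrac{r_{i_j}}{\alpha_{i_j}}-1)\bigr)T\pmod{I(f)}$.

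The one genuinely nontrivial step — and the main obstacle — is the existence, for each nonempty block, of such a common support index $i_j$ when $T\in O(I(f))$: the supports $\{\,i\in I_j:\beta_i\ge1\,\}$ of the block-$j$ exponents occurring in $T$ must have a common element. This is where the structure of $O(I(f))$, encoded in the Gröbner basis of $I(f)$ from Corollary~\ref{corgenIf} together with the decomposition of Proposition~\ref{pr:OI}, enters. One proves the contrapositive: if for some block every index of $I_j$ is missed by one of the factors, then the block-$j$ monomial $\prod_{\beta\in A^{j}}(Y^{\beta})^{q_{\beta}}$ is reducible — after, if necessary, finitely many of the degree-preserving rewrites $Y^{\beta}Y^{\gamma}\rightsquigarrow\tfrac{\gamma_i+1}{\beta_i}\,Y^{\beta-\delta^i}Y^{\gamma+\delta^i}$ used in the proof of Lemma~\ref{lemTIf}, one reaches a monomial that lies in $T(I(f))$ and divides $T$, so that $T\in T(I(f))$ by the upper-set property of Lemma~\ref{lemsecfini} — contradicting $T\in O(I(f))$. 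Making this reduction argument precise, i.e. controlling the dominant monomials produced by the rewrites, is the heart of the proof.
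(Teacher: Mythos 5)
Your first half is sound and matches the paper's strategy: applying Lemma~\ref{lemcroc} with $a=(1,\dots,1)$ and $b=\sum_j\frac{1}{\alpha_{i_j}}\delta^{i_j}$ to get $G_{a,b}(T)\equiv\bigl(P+\sum_j\frac{\mu_{i_j}}{\alpha_{i_j}}\bigr)T \bmod I(f)$, noting $q_{(0,\dots,0)}=0$ by Lemmas~\ref{lemsecfini} and~\ref{lemTIf}, and reducing the lemma to the purely combinatorial statement that for each block $j$ one can choose $i_j\in I_j$ with $\mu_{i_j}-\alpha_{i_j}(Q_j-1)\in\llbracket1,\alpha_{i_j}\rrbracket$ (the paper does the same bookkeeping, including your ``$Q_j=0$'' case). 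But the lower bound $\mu_{i_j}\ge\alpha_{i_j}(Q_j-1)+1$, i.e.\ your ``common support index'', is exactly the content of the lemma, and you do not prove it: you explicitly defer it (``making this reduction argument precise \dots is the heart of the proof''). That is a genuine gap, not a detail. The paper closes it differently, by taking $\beta$ minimal among the block-$j$ exponents with $q_\beta\neq0$, an index $i_j$ in its support, and arguing from Lemma~\ref{lemTIf} and $T\in O(I(f))$ that every other block-$j$ factor is full at $i_j$.

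Moreover the route you sketch for the missing step looks off-target. After one of the rewrites $Y^{\beta}Y^{\gamma}\rightsquigarrow\frac{\gamma_i+1}{\beta_i}Y^{\beta-\delta^i}Y^{\gamma+\delta^i}$ the resulting monomial no longer divides $T$, so Lemma~\ref{lemsecfini} cannot be invoked the way you state; and degree-preserving moves plus Lemma~\ref{lemTIf}(2) need not suffice at all: take one block with all $\alpha_i=1$ and factors $Y^{\delta^1+\delta^2},\,Y^{\delta^2+\delta^3},\,Y^{\delta^1+\delta^3}$ — there is no common support index, Lemma~\ref{lemTIf}(2) is vacuous (no exponent lies strictly between $1$ and $\alpha_i$), and no two factors can be merged or usefully rewritten pairwise. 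What excludes such a $T$ from $O(I(f))$ is a degree-dropping congruence, not a degree-preserving one: by Corollary~\ref{corgenIf}, the block-$j$ monomial is congruent mod $I(f)$, up to a nonzero scalar, to $\prod_{i\in I_j}(Y^{\delta^i})^{\mu_i}$, hence to any repacking of $\mu$ into factors $Y^{\gamma}$ with $\gamma\in A^j\setminus\{0\}$; if $Q_j$ exceeded the minimal number of factors, which is $\max_{i\in I_j}\lceil\mu_i/\alpha_i\rceil$, a lower-degree congruent monomial would make $T$ a leading term, contradicting $T\in O(I(f))$. So $Q_j=\max_{i\in I_j}\lceil\mu_i/\alpha_i\rceil$, and any index realizing this maximum satisfies $\mu_{i_j}>(Q_j-1)\alpha_{i_j}$, which is precisely the bound you need. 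Filling your gap along these lines (or along the paper's minimal-factor argument) is required before the proof stands.
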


\begin{proof} Let $T=X_1^{p_1}\dots X_n^{p_n} \prod_{\beta \in A^{\cal I}}(Y^{\beta})^{q_{\beta}}\in O(I(f))$ and $s=\deg(T)$. Let $j \in \llbracket 1,p \rrbracket$, we will construct
$(i_j,r_{i_j})\in I_j \times \llbracket 1,\alpha_{i_j} \rrbracket$ by separation of cases.\\

\noindent Case 1: There exists  $\beta \in \alpha^j$ such that $q_{\beta} \ne 0$. We denote $\beta$ the minimum of these $n$-uples. As $T \in O(I(f))$, from Lemmas $\ref{lemsecfini}$ and $\ref{lemTIf}$ then
$\beta \ne(0,\dots,0)$. So there exists $i_j \in I_j$ such that $\beta_{i_j} \ne 0$. For every $\beta' \in \alpha^j$ such that $q_{\beta'} \ne 0$,  then $\beta_{i_j}'=\alpha_{i_j}$
 as consequence of Lemma $\ref{lemTIf}$. From Lemma $\ref{lemcroc}$,  $[X_{i_j},T]Y^{\delta^{i_j}}\equiv q_{\beta}\beta_{i_j}T+(\sum_{\beta' \in \alpha^j, \beta' \ne \beta} q_{\beta'}\alpha_{i_j}) T~~mod(I(f))$. Setting $r_{i_j}=\beta_{i_j}\in \llbracket 1,\alpha_{i_j} \rrbracket $, the formula:
 $$\frac{1}{\alpha_{i_j}}[X_{i_j},T]Y^{\delta^{i_j}}\equiv(\sum_{\beta' \in \alpha^j } q_{\beta'}+(\frac{r_{i_j}}{\alpha_{i_j}}-1)) T~~mod(I(f))$$
is true if $r_{i_j}=\alpha_{i_j}$, and also if $1\le r_{i_j}<\alpha_{i_j}$ since in this case $q_{\beta}=1$ from Lemma $\ref{lemTIf}$.\\

\noindent Case 2: For every  $\beta \in \alpha^j$, we have  $q_{\beta} = 0$.  We can make an arbitrary choice of $i_j$ in  $I_j$,  and then put $r_{i_j}=\alpha_{i_j}$.
The equality $\frac{1}{\alpha_{i_j}}[X_{i_j},T]Y^{\delta^{i_j}}\equiv\left(\sum_{\beta' \in \alpha^j }q_{\beta '}+(\frac{r_{i_j}}{\alpha_{i_j}}-1)\right) T~~mod(I(f))$ holds because the  members on each side vanish. Summing up we obtain
$$\sum_{j=1}^p \left (\frac{1}{\alpha_{i_j}}[X_{i_j},T]Y^{\delta^{i_j}} \right )\equiv\left( \sum_{j=1}^p ((\sum_{\beta' \in \alpha^j } q_{\beta'}T+\sum_{j=1}^p(\frac{\beta_{i_j}}{r_{i_j}}-1))\right )T~~mod(I(f)).$$

The family $(\alpha^j)_{1 \le j \le p}$ is a partition of $A^{\cal I}$ then $\sum_{j=1}^p \left(\sum_{\beta' \in \alpha^j } q_{\beta '} \right) =\sum_{\beta \in A^{\cal I}}q_{\beta}$. Moreover  $\sum_{i=1}^n X_i[T,Y^{\delta^i}]\equiv(\sum_{i=1}^n p_i) T~~mod(I(f))$ from Lemma $\ref{lemcroc}$. By definition $s= \sum_{i=1}^n p_i+\sum_{\beta \in A^{\cal I}}q_{\beta}$, hence
$$G_{(1,\dots,1),\sum_{j=1}^p \frac{1}{\alpha_{i_j}}\delta^{i_j}}(T)\equiv\left(s+\sum_{j=1}^p (\frac{r_{i_j}}{\alpha_{i_j}}-1)\right)T~~mod(I(f)).$$
\end{proof}

\begin{pr} $\label{Ld}$ Let $s \in \mathbb{N}$. We define the operator:

\begin{equation*} G_s=\prod_{i=(i_1,\dots,i_p)\in \prod_{j=1}^p I_j} \left( \prod_{(r_{i_1},\dots,r_{i_j})\in \prod_{j=1}^p \llbracket 1,\alpha_{i_j} \rrbracket} G_{a,b_i}-\left(s+\sum_{j=1}^p (\frac{r_{i_j}}{\alpha_{i_j}}-1)\right) \right),\end{equation*}

with $a=(1,\dots,1)$ and $b_i=\sum_{j=1}^p \frac{1}{\alpha_{i_j}}\delta^{i_j}$. For every $T=X_1^{p_1}\dots X_n^{p_n} \prod_{\beta \in A^{\cal I}}(Y^{\beta})^{q_{\beta}}\in O(I(f))$ such that  $\deg(T)=s $ we have
  $$G_s(T)\equiv 0~~mod(I(f)).$$
\end{pr}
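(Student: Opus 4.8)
The plan is to apply Lemma \ref{redOif} for each choice of the multi-index $i=(i_1,\dots,i_p)\in\prod_{j=1}^p I_j$ and then observe that the factor of $G_s$ corresponding to that $i$ is exactly the one that annihilates $T$ modulo $I(f)$. Fix $T=X_1^{p_1}\dots X_n^{p_n}\prod_{\beta\in A^{\cal I}}(Y^\beta)^{q_\beta}\in O(I(f))$ with $\deg(T)=s$. First I would invoke Lemma \ref{redOif}: it produces a specific $p$-uple $\left((i_1,r_{i_1}),\dots,(i_p,r_{i_p})\right)\in\prod_{j=1}^p(I_j\times\llbracket 1,\alpha_{i_j}\rrbracket)$ — call it $(i^0,r^0)$ — such that, with $a=(1,\dots,1)$ and $b_{i^0}=\sum_{j=1}^p\frac{1}{\alpha_{i_j}}\delta^{i_j}$,
\begin{equation*}
G_{a,b_{i^0}}(T)\equiv\left(s+\sum_{j=1}^p\left(\frac{r_{i_j}}{\alpha_{i_j}}-1\right)\right)T~~mod(I(f)).
\end{equation*}
Equivalently, the operator $\left(G_{a,b_{i^0}}-\left(s+\sum_{j=1}^p(\frac{r_{i_j}}{\alpha_{i_j}}-1)\right)\right)$ sends $T$ into $I(f)$.

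Next I would use that $I(f)$ is a two-sided ideal invariant under every operator $G_{a,b}$ (stated just after the definition of $G_{a,b}$), and more generally under any composition of such operators and of scalar shifts. Therefore, once one factor in the big product defining $G_s$ maps $T$ into $I(f)$, every further factor keeps the image inside $I(f)$. The only subtlety is the ordering of the factors: the product over $i\in\prod_j I_j$ and then over $(r_{i_1},\dots,r_{i_p})\in\prod_j\llbracket 1,\alpha_{i_j}\rrbracket$ is a product of pairwise-not-necessarily-commuting operators, so I must be careful about which factor acts first. Here I would note that the factors of $G_s$ with a common $i=i^0$ form a sub-product $\prod_{(r_{i_1},\dots,r_{i_p})}\left(G_{a,b_{i^0}}-\left(s+\sum_j(\frac{r_{i_j}}{\alpha_{i_j}}-1)\right)\right)$, a polynomial in the single operator $G_{a,b_{i^0}}$; hence these factors commute with one another, and in particular the specific factor indexed by $(r^0_{i_1},\dots,r^0_{i_p})$ — which, since $r^0_{i_j}\in\llbracket 1,\alpha_{i_j}\rrbracket$, genuinely occurs in the product — can be brought to act first on $T$ within that sub-product. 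That factor sends $T$ into $I(f)$, and then all remaining factors of $G_s$ (whether from the same $i^0$-block or from other blocks) preserve $I(f)$. Consequently $G_s(T)\in I(f)$, i.e. $G_s(T)\equiv 0~~mod(I(f))$.

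The main obstacle I anticipate is precisely this bookkeeping about the non-commutativity of the factors of $G_s$: one needs that the single factor picked out by Lemma \ref{redOif} can legitimately be commuted past the other factors sharing the same $i^0$ so as to act first, and that factors with different values of $i$ play no role once we are already inside $I(f)$. The first point follows because all factors in a fixed $i$-block are polynomials in the one operator $G_{a,b_i}$ and hence mutually commute; the second is just ideal-invariance. Everything else is an immediate consequence of Lemma \ref{redOif} together with the fact that the guaranteed $p$-uple $(i^0,r^0)$ indexes an actual factor of the product $G_s$, which holds since the product ranges over all of $\prod_j I_j$ and all of $\prod_j\llbracket 1,\alpha_{i_j}\rrbracket$. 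This completes the plan.
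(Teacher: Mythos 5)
There is a genuine gap. The outer product defining $G_s$ runs over all $i\in\prod_{j=1}^p I_j$, so in general $G_s$ is a composition of several blocks built from \emph{different} operators $G_{a,b_i}$, and these operators need not commute with one another. Your reordering argument only permutes the factors inside the single block $i=i^0$ supplied by Lemma \ref{redOif}, and your appeal to ideal-invariance only takes care of the factors applied \emph{after} the annihilating one. It says nothing about the blocks with $i\neq i^0$ that sit to the right of the $i^0$-block in the composition and therefore act on $T$ \emph{before} it: after they have acted, the input of the annihilating factor is no longer $T$, and Lemma \ref{redOif} no longer applies to it. Since $i^0$ depends on $T$, you cannot fix an ordering of the outer product in which the $i^0$-block always acts first, and within-block commutation cannot move the annihilating factor past a block with a different $i$.

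The missing ingredient is that \emph{every} factor of $G_s$ acts on the monomial $T$ as a scalar modulo $I(f)$, not only the one singled out by Lemma \ref{redOif}. Indeed, Lemma \ref{lemcroc} gives, for every $(a,b)$ and every monomial $T=X_1^{p_1}\dots X_n^{p_n}\prod_{\beta\in A^{\cal I}}(Y^{\beta})^{q_{\beta}}\in{\cal S}$, the congruence $G_{a,b}(T)\equiv\bigl(\sum_k a_k p_k + b_k\sum_{\beta} q_{\beta}\beta_k\bigr)T \bmod I(f)$; in particular $G_{a,b_i}(T)\equiv\lambda_i T \bmod I(f)$ with a scalar $\lambda_i$ depending only on $i$ and $T$. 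Hence each factor $G_{a,b_i}-c_{i,r}$ sends $T$ to $(\lambda_i-c_{i,r})T$ modulo $I(f)$ and maps $I(f)$ into itself, so by induction along the composition $G_s(T)\equiv\bigl(\prod_{i,r}(\lambda_i-c_{i,r})\bigr)T \bmod I(f)$, irrespective of the order of the non-commuting factors. Lemma \ref{redOif} then says exactly that $\lambda_{i^0}$ equals one of the shifts $s+\sum_{j}(r_{i_j}/\alpha_{i_j}-1)$ occurring in the $i^0$-block, so one scalar factor vanishes and $G_s(T)\equiv 0 \bmod I(f)$. This diagonal action is also what makes rigorous the paper's own one-line argument about ``commuting the factors''; without it, your proof as written does not go through.
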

\begin{proof} Let $T=X_1^{p_1}\dots X_n^{p_n} \prod_{\beta \in A^{\cal I}}(Y^{\beta})^{q_{\beta}}\in O(I(f))$, denote $s=\deg(T)$. To compute $G_s(T)$ modulo $I(f)$ we  commute the factors (see Lemma $\ref{redOif}$) in order to place in first position one of them which annihilates $T$ modulo $I(f)$ (see Lemma $\ref{redOif}$). Since the ideal $I(f)$ is invariant for each factor of $G_s$ we have $G_s(T)\equiv 0~~mod(I(f))$.
\end{proof}
\noindent We now define the operators:
$$\label{hab} \begin{application}{H_{a,b}}{U(\lig_{\alpha,{\cal I}})}{U(\lig_{\alpha,{\cal I}})}{T}{\displaystyle{\sum_{k=1}^na_k[X_kT,Y^{\delta^k}]+b_k[X_k,TY^{\delta^k}].}} \end{application}$$
For $a\in \mathbb{C}^n$, we set $|a|=\sum_{k=1}^n a_k$.
\begin{lem} For every $T$ in $U(\lig_{\alpha,{\cal I}})$ we have
$H_{a,b}(T)\equiv G_{a,b}(T)+(|a|+|b|)T~~mod(I(f))$.
\end{lem}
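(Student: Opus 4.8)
The plan is to obtain the congruence directly from the Leibniz (derivation) property of the bracket in the associative algebra $U(\lig_{\alpha,{\cal I}})$, using only the structure relation $[X_k,Y^{\delta^k}]=Y^{(0,\dots,0)}$; no reduction to monomials and no case analysis are needed, the computation applying verbatim to an arbitrary $T\in U(\lig_{\alpha,{\cal I}})$. First I would rewrite each summand of $H_{a,b}(T)$ using the identities $[uv,w]=u[v,w]+[u,w]v$ and $[u,vw]=[u,v]w+v[u,w]$: for every $k\in\llbracket 1,n\rrbracket$,
\begin{equation*}
[X_kT,Y^{\delta^k}]=X_k[T,Y^{\delta^k}]+[X_k,Y^{\delta^k}]\,T,\qquad
[X_k,TY^{\delta^k}]=[X_k,T]\,Y^{\delta^k}+T\,[X_k,Y^{\delta^k}].
\end{equation*}
Since $(\delta^k)_k=1$, the only nontrivial relations of $\lig_{\alpha,{\cal I}}$ give $[X_k,Y^{\delta^k}]=Y^{\delta^k-\delta^k}=Y^{(0,\dots,0)}$. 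Substituting and isolating the terms that already occur in $G_{a,b}(T)$ yields the exact identity in $U(\lig_{\alpha,{\cal I}})$
\begin{equation*}
H_{a,b}(T)=G_{a,b}(T)+\Big(\sum_{k=1}^n a_k\Big)\,Y^{(0,\dots,0)}T+\Big(\sum_{k=1}^n b_k\Big)\,T\,Y^{(0,\dots,0)}.
\end{equation*}

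Next I would invoke that $Y^{(0,\dots,0)}$ is central in $U(\lig_{\alpha,{\cal I}})$: one has $[X_i,Y^{(0,\dots,0)}]=0$ for every $i$ (the relation $[X_i,Y^{\beta}]=Y^{\beta-\delta^i}$ being nontrivial only when $\beta_i\ge 1$), and the $Y^{\beta}$ commute among themselves, so $Y^{(0,\dots,0)}$ commutes with every generator of the algebra. Hence $T\,Y^{(0,\dots,0)}=Y^{(0,\dots,0)}T$ and the identity above collapses to
\begin{equation*}
H_{a,b}(T)=G_{a,b}(T)+(|a|+|b|)\,Y^{(0,\dots,0)}T .
\end{equation*}

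To finish I would pass to the quotient by $I(f)$. Modulo $I(f)$ the central element $Y^{(0,\dots,0)}$ reduces to a scalar (Lemma \ref{lemgenIf}, Corollary \ref{corgenIf}), namely to $1$ in the normalisation used throughout Section \ref{sect:deux} (cf.\ the reductions in Lemma \ref{lemcroc}); thus $Y^{(0,\dots,0)}T\equiv T\pmod{I(f)}$, and the announced congruence $H_{a,b}(T)\equiv G_{a,b}(T)+(|a|+|b|)T\pmod{I(f)}$ follows at once. I do not expect any genuine obstacle in this lemma; the only point meriting a moment's attention is to check that the two \emph{boundary} terms $[X_k,Y^{\delta^k}]\,T$ and $T\,[X_k,Y^{\delta^k}]$ produced by the two Leibniz expansions recombine, thanks to the centrality of $Y^{(0,\dots,0)}$, into the single scalar multiple $(|a|+|b|)\,Y^{(0,\dots,0)}T$, rather than remaining as two a priori distinct, non-commuting pieces.
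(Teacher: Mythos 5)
Your proof is correct and follows essentially the same route as the paper's: Leibniz expansion of the two commutators, the relation $[X_k,Y^{\delta^k}]=Y^{(0,\dots,0)}$ together with the centrality of $Y^{(0,\dots,0)}$, and then reduction of this central element to a scalar modulo $I(f)$. The only point worth noting is the normalisation you also flagged: Lemma \ref{lemgenIf} places $Y^{(0,\dots,0)}-i$ in $I(f)$, whereas the paper's own proof of this lemma (like yours) treats $Y^{(0,\dots,0)}$ as congruent to $1$, so the factor-of-$i$ discrepancy lies in the paper's conventions rather than in your argument.
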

\begin{proof} Let $T\in U(\lig_{\alpha,{\cal I}})$,
\begin{equation*}
\begin{split}
H_{a,b}(T)&=G_{a,b}(T)+\displaystyle{\sum_{k=1}^na_k[T_k,Y^{\delta^k}]T+b_kT[X_k,Y^{\delta^k}]}\\[-6pt]
& =G_{a,b}(T)+(|a|+|b|)T+(|a|+|b|)(Y^{(0,\dots,0)}-1)T\\[-6pt]
&\equiv G_{a,b}(T)+(|a|+|b|)T~~mod(I(f)).
\end{split}
\end{equation*}
\end{proof}
\noindent We reformulate the Proposition $\ref{Ld}$ with  operators $H_s$ instead of $G_s$.
\begin{pr} $\label{hs}$ Let $s \in \mathbb{N}$. We define the operator

\begin{equation*} H_s=\prod_{i=(i_1,\dots,i_p)\in \prod_{j=1}^p I_j} \left ( \prod_{(r_{i_1},\dots,r_{i_j})\in \prod_{j=1}^p \llbracket 1,\alpha_{i_j} \rrbracket} H_{a,b_i}-(s+n-p+\sum_{j=1}^p \frac{r_{i_j}+1}{\alpha_{i_j}}) \right),\end{equation*}

with $a=(1,\dots,1)$ et $b_i=\sum_{j=1}^p \frac{1}{\alpha_{i_j}}\delta^{i_j}$.

For every $T=X_1^{p_1}\dots X_n^{p_n} \prod_{\beta \in A^{\cal I}}(Y^{\beta})^{q_{\beta}}\in {\cal S}$ such that  $s=\deg(T)$ we have  $$H_s(T) \equiv 0~~mod(I(f)).$$
\end{pr}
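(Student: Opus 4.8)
The statement is Proposition \ref{Ld} transcribed into the operators $H_{a,b}$, so the plan is to deduce it from that proposition together with the identity $H_{a,b}(T)\equiv G_{a,b}(T)+(|a|+|b|)T\pmod{I(f)}$ established just above. The first (and essentially computational) step is to compare the operators $H_s$ and $G_s$ factor by factor. Since $I(f)$ is a two-sided ideal it is stable under every $G_{a,b}$ and every $H_{a,b}$, so all the congruences below may be read modulo $I(f)$ and multiplied. With $a=(1,\dots,1)$ one has $|a|=n$, and with $b_i=\sum_{j=1}^{p}\frac1{\alpha_{i_j}}\delta^{i_j}$ one has $|b_i|=\sum_{j=1}^{p}\frac1{\alpha_{i_j}}$; substituting the identity into a single factor of $H_s$ gives
\begin{equation*}
H_{a,b_i}-\Bigl(s+n-p+\sum_{j=1}^{p}\tfrac{r_{i_j}+1}{\alpha_{i_j}}\Bigr)\mathrm{Id}\;\equiv\;G_{a,b_i}-\Bigl(s+\sum_{j=1}^{p}\bigl(\tfrac{r_{i_j}}{\alpha_{i_j}}-1\bigr)\Bigr)\mathrm{Id}\pmod{I(f)},
\end{equation*}
because the constant $n+|b_i|$ produced by the identity exactly absorbs the shift $n-p+\sum\frac1{\alpha_{i_j}}$ and turns $\sum\frac{r_{i_j}+1}{\alpha_{i_j}}$ into $\sum(\frac{r_{i_j}}{\alpha_{i_j}}-1)$. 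The right-hand side is precisely the factor of index $(i,(r_{i_1},\dots,r_{i_p}))$ in the operator $G_s$ of Proposition \ref{Ld}. Taking the product over all $i\in\prod_{j}I_j$ and all $(r_{i_1},\dots,r_{i_p})\in\prod_{j}\llbracket1,\alpha_{i_j}\rrbracket$ then yields $H_s(U)\equiv G_s(U)\pmod{I(f)}$ for every $U\in U(\lig_{\alpha,{\cal I}})$.

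Second, I would invoke Proposition \ref{Ld}: if $T\in O(I(f))$ with $\deg T=s$, then $G_s(T)\equiv0\pmod{I(f)}$, hence $H_s(T)\equiv0\pmod{I(f)}$. For an arbitrary $T\in{\cal S}$ of degree $s$ I would first replace $T$ by its normal form $\mathrm{Can}(T,I(f))\in\mathrm{Vect}(O(I(f)))$ from Proposition \ref{pr:OI}; since $T-\mathrm{Can}(T,I(f))\in I(f)$ and $I(f)$ is $H_s$-stable, $H_s(T)\equiv H_s(\mathrm{Can}(T,I(f)))\pmod{I(f)}$, and one concludes by applying the $O(I(f))$-case to each monomial occurring in $\mathrm{Can}(T,I(f))$.

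The only point that is not mechanical — and the one I expect to be the real obstacle — is this last reduction: one must control which monomials appear in $\mathrm{Can}(T,I(f))$ so that the fixed operator $H_s$ (attached to the single value $s$) still annihilates each of them. This is exactly the kind of information the explicit Gr\"obner basis $\{Y_*^\beta-\beta!Y^\beta\mid\beta\in A^{\cal I}\}$ of $I(f)$ (Corollary \ref{corgenIf}) is meant to supply, via Lemma \ref{lemTIf} and the degree bookkeeping of \S\ref{sec4}. Everything else reduces to the scalar arithmetic carried out in the first step.
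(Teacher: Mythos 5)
Your first two steps are exactly the paper's (implicit) argument: the paper offers no independent proof of Proposition \ref{hs}, it simply declares it a reformulation of Proposition \ref{Ld} through the identity $H_{a,b}(T)\equiv G_{a,b}(T)+(|a|+|b|)T \bmod I(f)$, and your bookkeeping is right --- with $|a|=n$ and $|b_i|=\sum_j\frac1{\alpha_{i_j}}$ the shift $n-p+\sum_j\frac{r_{i_j}+1}{\alpha_{i_j}}$ is absorbed so that each factor of $H_s$ is congruent, as an operator preserving $I(f)$, to the corresponding factor $G_{a,b_i}-\bigl(s+\sum_j(\frac{r_{i_j}}{\alpha_{i_j}}-1)\bigr)$ of $G_s$, and the congruences may be multiplied because $I(f)$ is stable under every factor. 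So for $T\in O(I(f))$ of degree $s$ your proof is complete and coincides with the intended one.

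The obstacle you flag in your last paragraph, however, is not a technical point to be fixed by controlling $Can(T,I(f))$: the statement for arbitrary $T\in{\cal S}$ of degree $s$ is false, because reduction modulo $I(f)$ does not preserve the degree (the generators $Y_*^{\beta}-\beta!Y^{\beta}$ of Corollary \ref{corgenIf} identify a degree-$|\beta|$ monomial with a degree-one one), so the monomials of $Can(T,I(f))$ may have degree $<s$ and are then annihilated by the operator attached to their own degree, not by $H_s$. Concretely, take $n=p=1$, $\alpha=(2)$, so $A^{\cal I}=\{0,1,2\}$ and $(Y^{(1)})^2-2iY^{(2)}\in I(f)$. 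For $T=(Y^{(1)})^2$, $s=2$: modulo $I(f)$ one has $T\equiv 2iY^{(2)}$, and by Lemma \ref{lemcroc} (with $a=(1)$, $b=(\tfrac12)$) $G_{a,b}(Y^{(2)})\equiv\tfrac12[X_1,Y^{(2)}]Y^{(1)}\equiv Y^{(2)}$, hence $G_{a,b}(T)\equiv T$; since the two factors of $H_2$ are congruent to $G_{a,b}-\tfrac32$ and $G_{a,b}-2$, one gets $H_2(T)\equiv(1-\tfrac32)(1-2)T=\tfrac12 T\not\equiv 0$, because $\rho(T)=-x^2\neq0$ so $T\notin I(f)$. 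The correct hypothesis is therefore $T\in O(I(f))$, exactly as in Proposition \ref{Ld}, and this weaker form is all that is used later: in Proposition \ref{redHd} the operator $H_s$ is applied to $V_s$, the degree-$s$ homogeneous part of $Can(U,I(f))\in{\rm Vect}(O(I(f)))$. Your argument proves precisely that case; no Gr\"obner-basis analysis of $Can(T,I(f))$ can rescue the version stated for all of ${\cal S}$.
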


The next proposition tells us how the operators $H_s$ operators reduce the degree in $U(\lig_{\alpha,{\cal I}})$. We set $U_{-1}(\lig_{\alpha,{\cal I}})=\{0\}$ by convention.

\begin{pr} $\label{redHd}$  For every integer $s$, $H_s(U_{s}(\lig_{\alpha,{\cal I}})) \subset U_{s-1}(\lig_{\alpha,{\cal I}})+I(f)$.
\end{pr}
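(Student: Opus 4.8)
The plan is to deduce this from two facts already established: Proposition \ref{hs}, which says that $H_s$ kills, modulo $I(f)$, every basis monomial of $\mathcal S$ of degree exactly $s$; and Lemma \ref{lemcroc}, which shows that modulo $I(f)$ the operators $H_{a,b}$ act in a scalar (``diagonal'') way on such monomials and in particular do not raise their degree. The point requiring care is that $H_{a,b}$ genuinely raises the raw degree: in the term $X_k[T,Y^{\delta^k}]$ one picks up an extra factor $X_k$, and since $[X_k^{p_k},Y^{\delta^k}]=p_kX_k^{p_k-1}Y^{(0,\dots,0)}$ one also picks up the central factor $Y^{(0,\dots,0)}$. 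So the relevant notion of ``degree'' is the one seen through $I(f)$, i.e. membership in the subspaces $U_d(\lig_{\alpha,{\cal I}})+I(f)$, and the whole argument should be run inside that filtration.

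First I would record that $I(f)$ is stable under each $H_{a,b}$, hence under $H_s$: if $T\in I(f)$, then $X_kT$ and $TY^{\delta^k}$ lie in the two-sided ideal $I(f)$, and so do $[X_kT,Y^{\delta^k}]$ and $[X_k,TY^{\delta^k}]$. Since moreover every $U\in U_s(\lig_{\alpha,{\cal I}})$ is a finite linear combination of basis monomials $T=X_1^{p_1}\cdots X_n^{p_n}\prod_{\beta\in A^{\cal I}}(Y^\beta)^{q_\beta}$ with $\deg T\le s$, it is enough by linearity to understand $H_s(T)$ for each such monomial $T$.

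The key local observation is: for every monomial $T$ of degree $d$ one has $H_{a,b}(T)\equiv\mu_{a,b}(T)\,T\pmod{I(f)}$ for some scalar $\mu_{a,b}(T)$, hence $H_{a,b}(T)\in U_d(\lig_{\alpha,{\cal I}})+I(f)$. Indeed, the lemma immediately preceding Proposition \ref{hs} gives $H_{a,b}(T)\equiv G_{a,b}(T)+(|a|+|b|)T\pmod{I(f)}$, while Lemma \ref{lemcroc} gives $X_k[T,Y^{\delta^k}]\equiv p_kT$ and $[X_k,T]Y^{\delta^k}\equiv\big(\sum_{\beta}q_\beta\beta_k\big)T\pmod{I(f)}$, so that $G_{a,b}(T)$ is itself a scalar multiple of $T$ modulo $I(f)$. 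By linearity $H_{a,b}\big(U_d(\lig_{\alpha,{\cal I}})+I(f)\big)\subseteq U_d(\lig_{\alpha,{\cal I}})+I(f)$ for every $d$, and since $H_s$ is a composition of operators of the shape $H_{a,b}-\lambda\,\mathrm{Id}$ (each of which also preserves $U_d(\lig_{\alpha,{\cal I}})+I(f)$), we get $H_s\big(U_d(\lig_{\alpha,{\cal I}})+I(f)\big)\subseteq U_d(\lig_{\alpha,{\cal I}})+I(f)$ for every $d$.

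To conclude, write $U=U_{\mathrm{top}}+U_{\mathrm{low}}$, where $U_{\mathrm{top}}$ is the sum of the basis terms of degree exactly $s$ and $U_{\mathrm{low}}\in U_{s-1}(\lig_{\alpha,{\cal I}})$. For $U_{\mathrm{top}}$, Proposition \ref{hs} gives $H_s(T)\in I(f)$ for every degree-$s$ monomial $T$, so $H_s(U_{\mathrm{top}})\in I(f)$. For $U_{\mathrm{low}}$, the previous step applied with $d=s-1$ gives $H_s(U_{\mathrm{low}})\in U_{s-1}(\lig_{\alpha,{\cal I}})+I(f)$. Adding, $H_s(U)\in U_{s-1}(\lig_{\alpha,{\cal I}})+I(f)$, which is the claim (the cases $s<0$ are trivial, and $s=0$ is covered since the only degree-$0$ monomial is $1$ and $H_0(1)\in I(f)$ by Proposition \ref{hs}). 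The only genuine obstacle is the one flagged above: one must not measure degrees in $U(\lig_{\alpha,{\cal I}})$ directly but modulo $I(f)$, and Lemma \ref{lemcroc} is exactly what guarantees that $H_{a,b}$, and hence $H_s$, is non-increasing for that filtration.
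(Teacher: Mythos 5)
Your preliminary steps are fine: $I(f)$ is stable under each $H_{a,b}$, and Lemma~\ref{lemcroc} together with the lemma comparing $H_{a,b}$ and $G_{a,b}$ does show that $H_{a,b}$ acts diagonally on PBW monomials modulo $I(f)$, so that $H_s$ preserves every subspace $U_d(\lig_{\alpha,{\cal I}})+I(f)$. The gap is the step ``$H_s(U_{\rm top})\in I(f)$''. You apply Proposition~\ref{hs} to \emph{all} PBW monomials of degree $s$, but the substance of that proposition --- Proposition~\ref{Ld}, hence Lemma~\ref{redOif} --- is only available for monomials lying in $O(I(f))$: the case analysis in Lemma~\ref{redOif} uses $T\in O(I(f))$ through Lemmas~\ref{lemsecfini} and~\ref{lemTIf} to identify the eigenvalue. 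For a monomial in $T(I(f))$ the eigenvalue identity fails, and so does your intermediate claim. Concretely, in the Heisenberg case $n=p=1$, $\alpha_1=1$, take $T=Y^{(0,\dots,0)}$ and $s=1$: then $H_{(1,\dots,1),(1,\dots,1)}(T)=2(Y^{(0,\dots,0)})^2$ and the single factor of $H_1$ subtracts the scalar $3$, so $H_1(T)=2(Y^{(0,\dots,0)})^2-3\,Y^{(0,\dots,0)}$, which is congruent modulo $I(f)$ to a nonzero scalar; this lies in $U_0(\lig_{\alpha,{\cal I}})+I(f)$ but certainly not in $I(f)$. So the statement you invoke is only valid for monomials already reduced into $O(I(f))$ (the literal ``$T\in{\cal S}$'' in Proposition~\ref{hs} should be read as ``$T\in O(I(f))$''; that is all its derivation from Proposition~\ref{Ld} provides), and your decomposition of $U$ in the raw PBW basis does not produce such monomials.

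This is precisely where the paper's proof differs from yours and where the Gr\"obner-basis machinery enters: given $U\in U_s(\lig_{\alpha,{\cal I}})$, the paper first writes $U=\Phi+Can(U,I(f))$ with $\Phi\in I(f)$ and $Can(U,I(f))\in Vect(O(I(f)))$ (Proposition~\ref{pr:OI}), uses Proposition~\ref{pr:OImin} to guarantee that $Can(U,I(f))$ still lies in $U_s(\lig_{\alpha,{\cal I}})$, and only then splits it as $V_s+V_{s-1}$; the homogeneous degree-$s$ part $V_s$ is a combination of $O(I(f))$-monomials, so Proposition~\ref{hs} kills it modulo $I(f)$, while $\Phi$ and $V_{s-1}$ are handled exactly as you handle them. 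If you insert this canonical-form reduction before your top/low splitting, your argument closes the gap and becomes the paper's proof; without it, the key step fails.
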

\begin{proof} Let $s \in \mathbb{N}$ and $U \in U_{s}(\lig_{\alpha,{\cal I}})$. There exists one and only one decomposition $U=\Phi+Can(U,I(f))$ with $\Phi \in I(f)$ and $Can(U,I(f))\in Vect(O(I(f))$, see Proposition $\ref{pr:OI}$. From Proposition $\ref{pr:OImin}$,
$T(Can(U,I(f)) \preceq T(U)$ then $Can(U,I(f)) \in U_{s}(\lig_{\alpha,{\cal I}})$. Let $Can(U,I(f))=V_s+V_{s-1}$, where $V_s$ is the homogeneous part of degree $s$ of $Can(U,I(f))$. By linearity $H_s(U)=H_s(\Phi)+H_s(V_s)+H_s(V_{s-1})$, $H_s(\Phi) \in I(f)$  since every two-sided ideal of $U(\lig_{\alpha,{\cal I}})$ are invariant for $H_s$, $H_s(V_s) \in I(f)$ from Proposition $\ref{hs}$ and $H_s(V_{s-1})\in U_{s-1}(\lig_{\alpha,{\cal I}})+I(f)$ from Lemma $\ref{lemcroc}$. Hence $H_s(U)\in U_{s-1}(\lig_{\alpha,{\cal I}})+I(f)$.

\end{proof}

 \subsection{Meromorphic continuation}

$\label{sec5}$
We denote by ${\cal D}(\lig_{\alpha,{\cal I}})$ the algebra $\rho(U(\lig_{\alpha,{\cal I}}))$ and for every  $q\in \mathbb{N}$, ${\cal D}(\lig_{\alpha,{\cal I}})^q=\rho({\cal U}_q(\lig_{\alpha,{\cal I}}))$. Let $\Delta_1$ the Goodman laplacian defined as
$\Delta_1=\rho(1+\Delta_G)$ where  $\Delta_G=-\left (\sum_{k=1}^n X_k^2+\sum_{\beta \in A^{\cal I}}(Y^{\beta})^2 \right) $ (see \cite{Goodman}).

\begin{pr} The triple $({\cal D}(\lig_{\alpha,{\cal I}}),\Delta_1,2)$ is an algebra  of generalized differential operators, and then $\Delta_1$ is a generalized laplacian of order 2  (see Definition $\ref{algdif}$), so that:
\begin{enumerate}[nolistsep]
\item  $({\cal D}(\lig_{\alpha,{\cal I}})^q=\rho({\cal U}_q(\lig_{\alpha,{\cal I}})))_{q\in \mathbb{N}}$ is an increasing filtration of ${\cal D}(\lig_{\alpha,{\cal I}})$.
\item The unity is of order 0.
\item For every $\tilde{X}\in {\cal D}(\lig_{\alpha,{\cal I}})^{q_{\tilde{X}}}$, $[\Delta_1,\tilde{X}]\in {\cal D}(\lig_{\alpha,{\cal I}})^{q_{\tilde{X}}+1}$.
\item $\label {ellip}$ For every  $\tilde{X}\in {\cal D}(\lig_{\alpha,{\cal I}})^{q_{\tilde{X}}}$, there exists $\varepsilon>0 $ such that:
 \begin{equation*} \forall v \in \Hi^{\infty},~~\|\Delta_1^{\frac{q}{2}}v \|+\|v\|\ge \varepsilon \|\tilde{X}v\|,~~~\text{~(generalized G{\aa}rding inequality).}\end{equation*}
\end{enumerate}
\end{pr}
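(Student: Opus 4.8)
The plan is to verify, one after another, the four conditions of Definition~\ref{algdif} for the triple $({\cal D}(\lig_{\alpha,{\cal I}}),\Delta_1,2)$, the ambient Hilbert space being $\Hi=L^2(\mathbb{R}^n)$ on which $\rho$ acts. First I would record that, as an operator on $L^2(\mathbb{R}^n)$,
$$\Delta_1=\rho(1+\Delta_G)=1-\sum_{k=1}^n\frac{\partial^2}{\partial x_k^2}+\sum_{\beta\in A^{\cal I}}\frac{x^{2\beta}}{(\beta!)^2},$$
since $\rho(X_k)=-\partial/\partial x_k$ while $\rho(Y^\beta)$ is multiplication by $i(-1)^{|\beta|}x^{\beta}/\beta!$, so $\rho(Y^\beta)^2$ is multiplication by $-x^{2\beta}/(\beta!)^2$. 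The potential here is a nonnegative polynomial tending to $+\infty$ (it dominates $\sum_{i=1}^n x_i^{2\alpha_i}/(\alpha_i!)^2$, because $\alpha_i\delta^i\in A^{\cal I}$ for every $i$), hence $\Delta_1$ is essentially self-adjoint, bounded below by $1$, thus positive and invertible, and has compact resolvent; moreover $\Hi^\infty=\bigcap_n\mathrm{Dom}(\Delta_1^n)$ is the Schwartz space on $\mathbb{R}^n$, which is stable under all $\rho(X)$, $X\in U(\lig_{\alpha,{\cal I}})$, so that ${\cal D}(\lig_{\alpha,{\cal I}})\subseteq\mathrm{End}(\Hi^\infty)$ is meaningful. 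This $\Delta_1$ is precisely the Goodman Laplacian attached to the basis $(X_1,\dots,X_n,Y^\beta)_{\beta\in A^{\cal I}}$ of $\lig_{\alpha,{\cal I}}$ in the representation $\rho$, and these facts are part of Goodman's theory \cite{Goodman}.

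For condition~(1) I would take ${\cal D}(\lig_{\alpha,{\cal I}})^q=\rho(U_q(\lig_{\alpha,{\cal I}}))$, where $(U_q(\lig_{\alpha,{\cal I}}))_{q\in\mathbb{N}}$ is the Poincar\'e--Birkhoff--Witt filtration of the enveloping algebra: monotonicity and exhaustiveness of the images are clear from $U_q\subseteq U_{q'}$ for $q\le q'$ and $\bigcup_q U_q=U(\lig_{\alpha,{\cal I}})$, while ${\cal D}^q{\cal D}^{q'}\subseteq{\cal D}^{q+q'}$ follows from $U_qU_{q'}\subseteq U_{q+q'}$ together with the fact that $\rho$ is an algebra homomorphism. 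Condition~(2) is immediate: $1\in U_0(\lig_{\alpha,{\cal I}})$, so $\mathrm{Id}=\rho(1)\in{\cal D}^0$, and in fact ${\cal D}^0=\mathbb{C}\,\mathrm{Id}$, so the unit has order $0$. For condition~(3), observe that $1+\Delta_G\in U_2(\lig_{\alpha,{\cal I}})$ because $X_k$ and every $Y^\beta$ have degree $1$, so every monomial occurring in $\Delta_G$ has degree at most $2$; given $\tilde X=\rho(X)$ with $X\in U_{q_X}(\lig_{\alpha,{\cal I}})$ one gets $[\Delta_1,\tilde X]=\rho([1+\Delta_G,X])=\rho([\Delta_G,X])$, and since the associated graded algebra $\mathrm{gr}\,U(\lig_{\alpha,{\cal I}})\cong S(\lig_{\alpha,{\cal I}})$ is commutative, the top symbols cancel and $[U_2,U_{q_X}]\subseteq U_{q_X+1}$; hence $[\Delta_1,\tilde X]\in\rho(U_{q_X+1})={\cal D}^{q_X+1}$, which is exactly condition~(3) for $r=2$.

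The main obstacle is condition~(4), the generalized G{\aa}rding inequality, which I expect to rest entirely on Goodman's elliptic/subelliptic estimates \cite{Goodman}. Because $\Delta_1=1-\sum_j\rho(Z_j)^2$ for a basis $(Z_j)$ of $\lig_{\alpha,{\cal I}}$, Goodman's results in the nilpotent case provide, for each $u\in U_q(\lig_{\alpha,{\cal I}})$, a constant $C_u>0$ such that $\|\rho(u)v\|\le C_u\bigl(\|\Delta_1^{q/2}v\|+\|v\|\bigr)$ for every $v\in\Hi^\infty$. Applying this to any preimage $u\in U_{q}(\lig_{\alpha,{\cal I}})$ of a given $\tilde X\in{\cal D}(\lig_{\alpha,{\cal I}})^{q}$ yields condition~(4) with $\varepsilon=1/C_u$. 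Since all the remaining points are formal consequences of the PBW filtration and of $\rho$ being a surjective algebra homomorphism, this shows that $({\cal D}(\lig_{\alpha,{\cal I}}),\Delta_1,2)$ is an algebra of generalized differential operators and, in particular, that $\Delta_1$ is a generalized Laplace operator of order $2$.
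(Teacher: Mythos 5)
Your proposal is correct and follows essentially the same route as the paper: transport the PBW degree filtration through the surjective homomorphism $\rho$ to get conditions (1)--(3) (the commutator bound coming from $\deg(\Delta_G)=2$ and commutativity of the associated graded algebra), and invoke Goodman's estimate \cite{Goodman} for the generalized G{\aa}rding inequality. Your version merely adds explicit verifications (the Schr\"odinger form of $\Delta_1$, compact resolvent, stability of $\Hi^\infty$) that the paper leaves implicit.
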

\begin{proof}
 The filtration on $({\cal D}(\lig_{\alpha,{\cal I}})$ is  transported from the standard degree filtration on $U(\lig_{\alpha,{\cal I}})$ by the algebra morphism $\rho$, so the two first items are clear. The third one also, in light of $\deg(\Delta_G)=2$. The generalized G{\aa}rding inequality, comes from Proposition \cite[Proposition 2.1]{Goodman}.
\end{proof}

\noindent For a proof of the next proposition, see \cite[proposition I.1]{Domi5} and \cite[Proposition IV.1]{Domi5}.

\begin{pr}\label{Deltaschatten}
There exists some real $s_0 \ge 1$ such that $\Delta_1^{-\frac{1}{2}} \in {\cal L}^{s_0}(\Hi)$.
\end{pr}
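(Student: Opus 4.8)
The plan is to realise $\Delta_1$ explicitly as a Schr\"odinger operator on $L^2(\R^n)$, to bound it below by a separable anharmonic oscillator, and to use the classical eigenvalue asymptotics of the latter. With $\rho(X_k)=-\partial/\partial x_k$ and $\rho(Y^\beta)=i(-1)^{|\beta|}x^\beta/\beta!$, one obtains on the Schwartz space of $\R^n$ (a core)
\[
\Delta_1=\rho(1+\Delta_G)=1-\sum_{k=1}^n\frac{\partial^2}{\partial x_k^2}+\sum_{\beta\in A^{\cal I}}\frac{x^{2\beta}}{(\beta!)^2},
\]
which, being $-\Delta_{\R^n}+V$ with $V=\sum_{\beta\in A^{\cal I}}x^{2\beta}/(\beta!)^2\ge 1$ a polynomial, is essentially self-adjoint, positive and invertible (this is part of the setup inherited from the preceding proposition). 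First I would note that $V$ is coercive: for each $i\in\llbracket 1,n\rrbracket$, writing $i\in I_j$, one has $\alpha_i\delta^i\preceq\sum_{i'\in I_j}\alpha_{i'}\delta^{i'}$, so $\alpha_i\delta^i\in A^{\cal I}$ and therefore
\[
V(x)\ \ge\ V_0(x):=\sum_{i=1}^n\frac{x_i^{2\alpha_i}}{(\alpha_i!)^2}\ \longrightarrow\ +\infty\quad\text{as }|x|\to\infty.
\]
Since $0\le V_0\le V$, the form domain of $\Delta_1$ is contained in that of $H_0:=1-\Delta_{\R^n}+V_0$ and $q_{H_0}\le q_{\Delta_1}$ there; by the min-max principle $\mu_j(\Delta_1)\ge\mu_j(H_0)>0$ for every $j$, where $\mu_j(\cdot)$ is the $j$-th eigenvalue counted with multiplicity in nondecreasing order (both operators having compact resolvent). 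Consequently, for any $s>0$,
\[
\bigl\|\Delta_1^{-1/2}\bigr\|_{{\cal L}^{s}(\Hi)}^{s}=\sum_j\mu_j(\Delta_1)^{-s/2}\ \le\ \sum_j\mu_j(H_0)^{-s/2},
\]
so it is enough to control the eigenvalue growth of the model operator $H_0$.

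The operator $H_0$ is $1$ plus the sum of the $n$ commuting one-dimensional anharmonic oscillators $h_i=-d^2/dx_i^2+x_i^{2\alpha_i}/(\alpha_i!)^2$ on $L^2(\R)$, so its spectrum consists of the numbers $1+\sum_{i=1}^n E^{(i)}_{k_i}$ with $(k_1,\dots,k_n)\in\N^n$, the $E^{(i)}_k$ being the eigenvalues of $h_i$. The classical WKB asymptotics for the one-dimensional anharmonic oscillator give $E^{(i)}_k\asymp k^{2\alpha_i/(\alpha_i+1)}$ as $k\to\infty$; equivalently, the phase-space volume of $\{|\xi|^2+V_0(x)\le\lambda\}$, evaluated by the scaling $x_i\mapsto\lambda^{1/(2\alpha_i)}x_i$, $\xi\mapsto\lambda^{1/2}\xi$, is $O(\lambda^{\nu})$ with $\nu=\tfrac n2+\sum_{i=1}^n\tfrac1{2\alpha_i}$, whence by the Weyl law
\[
N(\lambda):=\#\{\,j:\mu_j(H_0)\le\lambda\,\}=O\!\left(\lambda^{\nu}\right).
\]
Hence $\mu_j(H_0)\gtrsim j^{1/\nu}$, and $\sum_j\mu_j(H_0)^{-s/2}$ converges as soon as $s>2\nu=n+\sum_{i=1}^n 1/\alpha_i$. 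As $n\ge 1$ this threshold exceeds $1$, so picking any real $s_0>n+\sum_{i=1}^n 1/\alpha_i$ yields $s_0\ge 1$ and $\Delta_1^{-1/2}\in{\cal L}^{s_0}(\Hi)$.

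The only real obstacle is the eigenvalue estimate for $H_0$, that is, making the phase-space heuristic rigorous. The cleanest route is to exploit the exact separation of variables: reduce to the one-dimensional oscillators $h_i$, for which $E^{(i)}_k\sim c_{\alpha_i}k^{2\alpha_i/(\alpha_i+1)}$ is standard (e.g. via Titchmarsh's eigenfunction-expansion analysis), then count the lattice points $(k_1,\dots,k_n)$ with $\sum_i E^{(i)}_{k_i}\le\lambda$ to get the bound on $N(\lambda)$. Alternatively one may invoke the Cwikel--Lieb--Rozenblum inequality or Dirichlet--Neumann bracketing; in any case this is exactly the content of \cite[Prop.~I.1]{Domi5} and \cite[Prop.~IV.1]{Domi5}. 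The remaining steps --- the explicit form of $\Delta_1$, coercivity of $V$, the form inequality $\Delta_1\ge H_0$, and the min-max comparison of eigenvalues --- are routine.
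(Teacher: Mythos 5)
Your proposal is correct, but it is worth noting that the paper itself gives no argument for Proposition \ref{Deltaschatten}: it is stated with a bare pointer to \cite{Domi5}, so what you wrote is a genuinely self-contained alternative (modulo the classical one-dimensional anharmonic-oscillator asymptotics $E_k\asymp k^{2\alpha/(\alpha+1)}$, which you rightly identify as the only nontrivial analytic input, and for which Titchmarsh-type asymptotics or Dirichlet--Neumann bracketing indeed suffice). Your explicit realisation $\Delta_1=1-\Delta_{\R^n}+\sum_{\beta\in A^{\cal I}}x^{2\beta}/(\beta!)^2$ is exactly what the representation formulas of Paragraph \ref{sec3} give; the observation that $\alpha_i\delta^i\in A^{\cal I}$ (and $(0,\dots,0)\in A^{\cal I}$) makes the potential coercive and bounded below by $V_0$ is correct; and the min--max comparison with the separable model $H_0$ together with the counting bound $N(\lambda)=O(\lambda^{\nu})$, $\nu=\frac{n}{2}+\sum_{i=1}^{n}\frac{1}{2\alpha_i}$, is sound. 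What your route buys over the paper's citation is quantitative: it exhibits an explicit admissible exponent, namely any $s_0>n+\sum_{i=1}^{n}1/\alpha_i$, rather than mere existence of some $s_0\ge 1$, and it makes transparent that the threshold is governed by the phase-space volume of the Schr\"odinger symbol --- which is presumably the content of the Weyl-type estimates of \cite{Domi5} that the paper invokes. The only points you lean on implicitly and should state if this were written out in full are the essential self-adjointness of $\Delta_1$ on the Schwartz space (covered by Goodman's results already used for the preceding proposition, or by standard facts on Schr\"odinger operators with nonnegative polynomial potentials), so that the form comparison and min--max principle apply to the same self-adjoint realisations, and the elementary lattice-point count reducing $N(\lambda)$ for $H_0$ to the one-dimensional eigenvalue asymptotics; neither is a gap in substance.
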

Let $(a,b)\in (\mathbb{C}^n)^2$ and $s\in \mathbb{N}$.

  To the endomorphisms  $H_{a,b}$ and $H_s$ on  $U(\lig_{\alpha,{\cal I}})$ (see $\eqref{hab}$ and $\eqref{hs}$) ,  we associate  the endomorphisms $\widetilde{H}_{a,b}$ and $T_s$ on  ${\cal D}(\lig_{\alpha,{\cal I}})$:

\begin{equation*}\label{TSLIE}
\begin{split}
&\begin{application}{\widetilde{H}_{a,b}}{{\cal D}(\lig_{\alpha,{\cal I}})}{{\cal D}(\lig_{\alpha,{\cal I}})}{\widetilde{T}}{\displaystyle{\sum_{k=1}^na_k[\rho(X_k)\widetilde{T},\rho{Y^{\delta^k}}]+b_k[\rho{X_k},\widetilde{T}\rho(Y^{\delta^k})],}} \end{application}\\[-6pt]
&T_s=\prod_{(i_1,\dots,i_p)\in \prod_{j=1}^p I_j} \left ( \prod_{(r_{i_1},\dots,r{i_p})\in \prod_{j=1}^p \llbracket 1,\alpha_{i_j} \rrbracket} \tilde{H}_{(1,\dots,1),\sum_{j=1}^p \frac{1}{\alpha_{i_j}}\delta^{i_j}}-(s+n-p+\sum_{j=1}^p \frac{r_{i_j}+1}{\alpha_{i_j}}) \right).
\end{split}
\end{equation*}
As $\rho$ is an algebra morphism, this commutative diagram holds:
$$
\xymatrix{
    U(\lig_{\alpha,{\cal I}}) \ar[r]^{H_s} \ar[d]_{\rho}  & U(\lig_{\alpha,{\cal I}}) \ar[d]^{\rho} \\
    {\cal D}(\lig_{\alpha,{\cal I}}) \ar[r]^{T_s} & {\cal D}(\lig_{\alpha,{\cal I}}).
  }
$$
We are now able to establish the meromorphic continuation theorem for the Lie algebras $\lig_{\alpha,{\cal I}}$.

\begin{thm} $\label{th:merolig}$ Let be a Lie algebra  $\lig_{\alpha,{\cal I}}$. Let a real $s_0\ge 1$ such that $\Delta_1^{-\frac{1}{2}} \in {\cal L}^{s_0}(\Hi)$.
For every $\tilde{X} \in {\cal D}(\lig_{\alpha,{\cal I}})^{q_{\tilde{X}}}$,  the spectral zeta function:
\begin{equation*} \application{\zeta_{\tilde{X},\Delta_1}}{\mathbb{C}_{<-\frac{s_0+q}{2}}}{\mathbb{C}}{z}{{\rm Trace}(\tilde{X}\Delta_1^{z})}. \end{equation*}

is holomorphic and admits a meromorphic continuation to all of $\mathbb{C}$. The poles are constrained in the family $\Omega_{\alpha,{\cal I}}=(\omega_{i,r,l})$ where

$$\omega_{i,r,l}=\frac{p-n-q}{2}-\sum_{j=1}^p \frac{r_{i_j}+1}{2\alpha_{i_j}}+\frac{l}{2}, $$

for every $i=(i_1,\dots,i_p)\in \prod_{j=1}^p I_j$ , $r=(r_{i_1},\dots,r_{i_j})\in \prod_{j=1}^p \llbracket 1,\alpha_{i_j} \rrbracket$ and $l \in \mathbb{Z}$ such that $l \ge  \sum_{j=1}^p \frac{r_{i_j}+1}{\alpha_{i_j}}+n-p-s_0$. The multiplicity order of each pole is equal to or less  the number of times it appears in the family $\Omega_{\alpha,{\cal I}}$.

\end{thm}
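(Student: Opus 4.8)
The proof is an application of Theorem~\ref{thetheorem} to the algebra of generalized differential operators $\left({\cal D}(\lig_{\alpha,{\cal I}}),\Delta_1,2\right)$, which was verified above to be such an algebra and which satisfies $\Delta_1^{-1/2}\in{\cal L}^{s_0}(\Hi)$ by Proposition~\ref{Deltaschatten}. Thus one takes $r=2$, Schatten exponent $s_0$, and for the two fixed families of Paragraph~\ref{taylor-gen} one takes ${\cal P}=(\rho(X_k))_{1\le k\le n}$ and ${\cal Q}=(\rho(Y^{\delta^k}))_{1\le k\le n}$; the operators $\widetilde H_{a,b}$ introduced above then play the role of the operators $H_{a,b}$ of that paragraph. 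Every bracket $[\rho(X_k),\rho(Y^{\delta^k})]=\rho(Y^{(0,\dots,0)})=i\,{\rm Id}$ is a scalar operator, so the supplementary hypothesis $\sum_k(a_k+b_k)[\rho(X_k),\rho(Y^{\delta^k})]=\rho\,{\rm Id}$ mentioned in the introduction holds automatically. The whole matter then reduces to checking that the family $(T_s)_{s\in\mathbb{N}}$ of endomorphisms of ${\cal D}(\lig_{\alpha,{\cal I}})$ defined above is a reduction sequence in the sense of Definition~\ref{suitereduc}.

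Point~1 of Definition~\ref{suitereduc} is read off directly from the definition of $T_s$: it is a product of factors of the shape $\widetilde H_{(1,\dots,1),b_i}-(\alpha_m s+\beta_m){\rm Id}$ with $\alpha_m=1$ and $\beta_m=n-p+\sum_{j=1}^p\frac{r_{i_j}+1}{\alpha_{i_j}}$ (writing $p=|{\cal I}|$, as in the statement), and the family of the pairs $(\alpha_m,\beta_m)$ is nonzero since every $\alpha_m=1$. For point~2 one must show that $T_{q+2i}(\widetilde X\Delta_1^{\,i})\in{\cal D}(\lig_{\alpha,{\cal I}})[\Delta_1]^{q+2i-1}$ for every $\widetilde X\in{\cal D}(\lig_{\alpha,{\cal I}})^{q}$ and every $i\in\mathbb{N}$. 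Since $\Delta_1=\rho(1+\Delta_G)$ already belongs to ${\cal D}(\lig_{\alpha,{\cal I}})$, the algebra ${\cal D}(\lig_{\alpha,{\cal I}})[\Delta_1]$ and its filtration coincide with those of ${\cal D}(\lig_{\alpha,{\cal I}})$, so it is enough to land in ${\cal D}(\lig_{\alpha,{\cal I}})^{q+2i-1}=\rho({\cal U}_{q+2i-1}(\lig_{\alpha,{\cal I}}))$. Writing $\widetilde X=\rho(X)$ with $X\in{\cal U}_{q}(\lig_{\alpha,{\cal I}})$, and using $\deg\Delta_G=2$, we get $X(1+\Delta_G)^{i}\in{\cal U}_{q+2i}(\lig_{\alpha,{\cal I}})$ and $\widetilde X\Delta_1^{\,i}=\rho\bigl(X(1+\Delta_G)^{i}\bigr)$. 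The commutative diagram $\rho\circ H_s=T_s\circ\rho$, at $s=q+2i$, gives $T_{q+2i}(\widetilde X\Delta_1^{\,i})=\rho\bigl(H_{q+2i}(X(1+\Delta_G)^{i})\bigr)$; Proposition~\ref{redHd} gives $H_{q+2i}(X(1+\Delta_G)^{i})\in{\cal U}_{q+2i-1}(\lig_{\alpha,{\cal I}})+I(f)$; and applying $\rho$, with $\ker\rho=I(f)$ (Corollary~\ref{corgenIf}), yields the claim. Hence $(T_s)_{s\in\mathbb{N}}$ is a reduction sequence.

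Theorem~\ref{thetheorem} now gives at once the holomorphy of $\zeta_{\widetilde X,\Delta_1}$ on $\mathbb{C}_{<-(s_0+q)/2}$, its meromorphic continuation to $\mathbb{C}$, and ${\rm Poles}(\zeta_{\widetilde X,\Delta_1})\subset\bigcup_{\alpha\in{\rm Rac}(b)}R_\alpha$. It only remains to identify ${\rm Rac}(b)$ and the progressions $R_\alpha$. The roots of $b$ are exactly the numbers $\alpha_{i,r}=-\beta_m=p-n-\sum_{j=1}^p\frac{r_{i_j}+1}{\alpha_{i_j}}$, indexed by $i=(i_1,\dots,i_p)\in\prod_{j=1}^p I_j$ and $r=(r_{i_1},\dots,r_{i_p})\in\prod_{j=1}^p\llbracket1,\alpha_{i_j}\rrbracket$, a given value being possibly reached several times. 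For each of them, and with $r=2$, Schatten exponent $s_0$ and $q_X=q$, the set $R_{\alpha_{i,r}}$ of Theorem~\ref{thetheorem} is
\begin{equation*}
\big\{\tfrac{\alpha_{i,r}-q}{2}+\tfrac l2 \,\big|\, l\in\mathbb{Z},\; l\ge-s_0-\alpha_{i,r}\big\}=\big\{\tfrac{p-n-q}{2}-\textstyle\sum_{j=1}^p\tfrac{r_{i_j}+1}{2\alpha_{i_j}}+\tfrac l2 \,\big|\, l\ge\textstyle\sum_{j=1}^p\tfrac{r_{i_j}+1}{\alpha_{i_j}}+n-p-s_0\big\},
\end{equation*}
that is, precisely the set of the $\omega_{i,r,l}$. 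Hence ${\rm Poles}(\zeta_{\widetilde X,\Delta_1})\subset\Omega_{\alpha,{\cal I}}$, and the bound $m(\omega)\le\sum_{\alpha\in{\rm Rac}(b)}m(\alpha)\mathbf{1}_{R_\alpha}(\omega)$ of Theorem~\ref{thetheorem} says exactly that the order of a pole $\omega$ does not exceed the number of its occurrences in the family $\Omega_{\alpha,{\cal I}}$.

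The step requiring care is the verification that $(T_s)_{s\in\mathbb{N}}$ is a reduction sequence. First, one must confirm that the operators $\widetilde H_{a,b}$ really are covered by the Taylor-expansion formalism of Section~1 for the chosen ${\cal P},{\cal Q}$: the second commutator in $\widetilde H_{a,b}$ carries $\rho(Y^{\delta^k})$ to the right of $\widetilde T$ rather than to the left, which modifies $\widetilde H_{a,b}$ only by the iterated-commutator operator $\widetilde T\mapsto\sum_k b_k[\rho(Y^{\delta^k}),[\rho(X_k),\widetilde T]]$, whose Taylor expansions against $\Delta_1^{z}$ are governed by the same lemmas, the correction being absorbed because $[\rho(X_k),\rho(Y^{\delta^k})]$ is scalar. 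Second, the purely algebraic degree-lowering of Proposition~\ref{redHd}, which rests on the Gr\"obner-basis description of $I(f)$ (Corollary~\ref{corgenIf}), must be transported faithfully through $\rho$, as done above. Everything else is the bookkeeping matching the constants $\beta_m$ with the triples $(i,r,l)$.
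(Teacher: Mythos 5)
Your proposal is correct and follows essentially the same route as the paper: verify that $(T_s)_{s\in\mathbb{N}}$ is a reduction sequence via the commutative diagram $\rho\circ H_s=T_s\circ\rho$, Proposition \ref{redHd} and ${\rm Ker}(\rho)=I(f)$, then invoke Theorem \ref{thetheorem} and read off the poles from the roots of the explicit polynomial $b$. You even make explicit two points the paper leaves implicit (the treatment of $\tilde X\Delta_1^{\,i}$ using $\Delta_1\in{\cal D}(\lig_{\alpha,{\cal I}})$, and the fact that $\widetilde H_{a,b}$ places $\rho(Y^{\delta^k})$ on the right of $\widetilde T$, harmless since $[\rho(X_k),\rho(Y^{\delta^k})]$ is scalar); the only blemish is cosmetic, namely attributing ${\rm Ker}(\rho)=I(f)$ to Corollary \ref{corgenIf}, whereas that corollary only gives generators of $I(f)$ and the identification of the kernel comes from the Kirillov orbit method.
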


\begin{proof}Let $(a,b)\in (\mathbb{C}^n)^2$. We prove that $(T_s)_{s \in \mathbb{N}}$ is a  reduction sequence in the algebra ${\cal D}(\lig_{\alpha,{\cal I}})$ (see Definition  $\ref{suitered}$). For every $k\in \llbracket 1,n \rrbracket$, denote $P_k=\rho(X_k)$ et $Q_k=\rho(Y^{\delta^k})$. As  $[P_k,Q_k]=1$ every pairs $\left ((P_k)_{1\le k\le n},(Q_k)_{1\le k\le n}\right )$  and $(a,b)\in (\mathbb{C}^n)^2$ are always compatible. Here $\Delta_1$ is already in ${\cal D}(\lig_{\alpha,{\cal I}})$. For every $s\in \mathbb{N}$ and every $\tilde{X} \in {\cal D}(\lig_{\alpha,{\cal I}})^s$ we have just to prove that  $T_s(\tilde{X}) \in {\cal D}(\lig_{\alpha,{\cal I}})^{s-1}$.\\

Let $\tilde{X} \in {\cal D}(\lig_{\alpha,{\cal I}})^s$ and $X \in U_s(\lig_{\alpha,{\cal I}})$ such that $\rho(X)=\tilde{X}$. From previous commutative diagram, we stand $T_s(\tilde{X})=\rho(H_s(X))$. From Proposition $\ref{redHd}$, $H_s(X) \in  U_{s-1}(\lig_{\alpha,{\cal I}})+I(f)$ then $ T_s(\tilde{X})\in \rho(U_{s-1}(\lig_{\alpha,{\cal I}})$ since $I(f)$ is the kernel of $\rho$. Hence, we have proved $T_s(\tilde{X}) \in {\cal D}(\lig_{\alpha,{\cal I}})^{s-1}$. The polynomial function associates to the reduction sequence $(T_s)_{s \in \mathbb{N}}$ is defined by:
 \begin{equation*}
 \forall z' \in \mathbb{C},~ b(z')=\prod_{(i_1,\dots,i_p)\in \prod_{j=1}^p I_j} \left ( \prod_{(r_{i_1},\dots,r_{i_p})\in \prod_{j=1}^p \llbracket 1,\alpha_{i_j} \rrbracket} (p-n-\sum_{j=1}^p \frac{r_{i_j}+1}{\alpha_{i_j}}-z') \right).
 \end{equation*}
We conclude with  Theorem \ref{thetheorem}.
\end{proof}
\nocite{Avoros1} \nocite{Dixmier2} \nocite{Kirillovbook} \nocite{Rudin} \nocite{Cimpa}

\bibliographystyle{plain}
\bibliography{bibliofranck}

\begin{thebibliography}{10}

\bibitem{Cimpa}
J.L. Clerc, P.~Eymard, J.~Faraut, M.~Rais, and R.~Takahashi.
\newblock {\em Analyse harmonique}.
\newblock C.I.MP.A, pages 447-708, 1983.

\bibitem{Connes1}
A.~Connes and H.~Moscovici.
\newblock The local index formula in noncommutative geometry.
\newblock {\em Geom. Funct. Anal}, Volume 5,numéro 2,pages 174-243, 1995.

\bibitem{Dixmier2}
J.~Dixmier.
\newblock Sur les repr\'esentations unitaires des groupes de lie nilpotents.
\newblock {\em Bulletin de la S.M.F}, Tome 85: pages 325-388, 1957.

\bibitem{Dixmier1}
J.~Dixmier.
\newblock Sur les alg\`ebres de weyl.
\newblock {\em Bulletin de la S.M.F}, Tome 96: pages 209-242, 1968.

\bibitem{Godfrey}
C.~Godfrey.
\newblock Ideals of coadjoint orbits of nilpotent lie algebra.
\newblock {\em Transaction of the american mathematical society}, 233, 1977.

\bibitem{Goodman}
R.~Goodman.
\newblock Elliptic and subelliptic estimates for operators in a enveloping
  algebra.
\newblock {\em Duke Math}, J.47, pages 819-833, 1980.

\bibitem{Nhigson}
N.~Higson.
\newblock Meromorphic continuation of zeta functions associated to elliptic
  operators.
\newblock 2006.

\bibitem{Multifili1}
A.~Hulanicki and J.WW Jenkins.
\newblock Nilpotent {L}ie groups and eigenfunction expansions of {S}chrodinger
  operators.
\newblock {\em Studia Math.}, 87, 239-252, 1987.

\bibitem{Kirillov}
A.A. Kirillov.
\newblock Unitary representations of nilpotent lie groups.
\newblock {\em Uspehi Mat.}, Numero 17,pages 57-110, 1962.

\bibitem{Kirillovbook}
A.A. Kirillov.
\newblock {\em Lectures on the orbit method}.
\newblock Amer.Math.Soc, 2004.

\bibitem{Domi5}
D.~Manchon.
\newblock Distributions à support compact et repr\'esentations unitaires.
\newblock {\em Journal of Lie Theory}, Volume 9, pages 403-424, 1999.

\bibitem{zetaMP}
S.~Minakshisundaram and A.~Pleijel.
\newblock Some properties of the eigenfunctions of the {L}aplace-operator on
  {R}iemann manifolds.
\newblock {\em Canadian J.Math}, 1:242-246, 1949.

\bibitem{grobner5}
Teo Mora.
\newblock An introduction to commutative and non commutative {G}röbner bases.
\newblock {\em Theorical Computer Science}, Volume 134, Issue 1, pages 131-173.

\bibitem{SimonReed}
M.~Reed and B.~Simon.
\newblock {\em Functional analysis}.
\newblock Elsevier, 1980.

\bibitem{Rudin}
W.~Rudin.
\newblock {\em Analyse réelle et complexe}.
\newblock Dunod, 2009.

\bibitem{Seeley}
R.T. Seeley.
\newblock Complex powers of an elliptic operator.
\newblock {\em Amer.Math.Soc}, Pure Math.10, pages 288-307, 1967.

\bibitem{Otgon}
O.~Uuye.
\newblock Pseudo-differential operators and regularity of spectral triples.
\newblock {\em Fields Institute Communications}, V.61, pages 153-163, 2011.

\bibitem{Avoros1}
A.~Voros.
\newblock Spectral zeta functions.
\newblock {\em Advanced studies in Pure Mathematics}, 21, pages 327-358, 1992.

\end{thebibliography}

\end{document}